\theoremstyle{plain}
\newtheorem{theorem}{Theorem}[section]
\newtheorem{lemma}[theorem]{Lemma}
\newtheorem{corollary}[theorem]{Corollary}
\newtheorem{proposition}[theorem]{Proposition}
\newtheorem{conjecture*}{Conjecture}
\newtheorem{theorem*}{Theorem}
\newtheorem{corollary*}{Corollary}
\newtheorem{conjecture}[theorem]{Conjecture}
\theoremstyle{definition}
\newtheorem{hypothesis}[theorem]{Hypothesis}
\theoremstyle{definition}
\newtheorem{definition}[theorem]{Definition}
\newtheorem{remark}[theorem]{Remark}
\newtheorem*{acknowledgments}{Acknowledgements}
\font\russ=wncyr10  1
\def\sha{\hbox{\russ\char88}}
\DeclareMathOperator{\Gal}{Gal}
\DeclareMathOperator{\Hom}{Hom}
\DeclareMathOperator{\Spec}{Spec}
\DeclareMathOperator{\N}{N}
\DeclareMathOperator{\im}{im}
\newcommand{\CC}{\mathbb{C}}
\newcommand{\GG}{\mathbb{G}}
\newcommand{\QQ}{\mathbb{Q}}
\newcommand{\RR}{\mathbb{R}}
\newcommand{\ZZ}{\mathbb{Z}}
\newcommand{\cL}{\mathcal{L}}
\newcommand{\cG}{\mathcal{G}}
\newcommand{\cK}{\mathcal{K}}
\newcommand{\cO}{\mathcal{O}}
\newcommand{\cR}{\mathcal{R}}
\newcommand{\cU}{\mathcal{U}}
\newcommand{\fX}{\mathfrak{X}}
\newcommand{\fz}{\mathfrak{z}}
\newcommand{\per}{2\pi \sqrt{-1}}
\newcommand{\catname}[1]{\textnormal{{\textsf{#1}}}}
\newcommand{\DR}{\catname{R}}
\newcommand{\DL}{\catname{L}}
\newcommand{\rgamma}{\DR\Gamma}
\newcommand{\lotimes}{\otimes^{\DL}}
\begin{document}

\title[]{On functional equations of Euler systems}

\author{David Burns and Takamichi Sano}

\begin{abstract} We establish precise relations between Euler systems that are respectively associated to a $p$-adic representation $T$ and to its Kummer dual $T^*(1)$. Upon appropriate specialization of this general result, we are able to deduce the existence of an Euler system of rank $[K:\QQ]$ over a totally real field $K$ that both interpolates the values of the Dedekind zeta function of $K$ at all positive even integers and also determines all higher Fitting ideals of the Selmer groups of $\mathbb{G}_m$ over abelian extensions of $K$. This construction in turn motivates the formulation of a precise conjectural generalization of the Coleman-Ihara formula and we provide supporting evidence for this conjecture. \end{abstract}

\address{King's College London,
Department of Mathematics,
London WC2R 2LS,
U.K.}
\email{david.burns@kcl.ac.uk}

\address{Osaka City University,
Department of Mathematics,
3-3-138 Sugimoto\\Sumiyoshi-ku\\Osaka\\558-8585,
Japan}
\email{sano@sci.osaka-cu.ac.jp}

\maketitle

\tableofcontents

\section{Introduction}

\subsection{Background and results}

Let $p$ be a prime number and $T$ a $p$-adic representation over a number field $K$. Then there is considerable interest in the construction of `special elements' that lie in the higher exterior powers (or exterior power biduals) of the cohomology groups of $T$ over abelian extensions of $K$ and can be explicitly linked to the values of derivatives of $L$-functions that are related to $T$. In the best case, a family of such elements constitutes an Euler system of the appropriate rank for $T$ and hence controls the structure of Selmer modules associated to $T$ (via the general theory of \cite{bss}). However, despite their great importance, there are still even today very few known constructions of Euler systems.

With a view to better understanding both the basic properties and possible constructions of such families, our main interest in this article is to explore relations that should exist between the special elements associated to $T$ (and suitable auxiliary data) and the special elements associated to its Kummer dual $T^*(1) := \Hom_{\ZZ_p}(T,\ZZ_p(1))$.

Our approach uses the theory of `basic' Euler systems that was introduced in \cite{sbA}. We recall, in particular, that the latter theory involves a construction of  `vertical determinantal systems' that arise from a detailed study of the Galois cohomology complexes of representations.

In this regard, the main theoretical advance that we shall make is to introduce for a general representation $T$ over an arbitrary  number field $K$ a natural `local' analogue of the notion of vertical determinantal systems and to show (in Theorem \ref{funceq}) that this can be used to establish a `functional equation' for the vertical determinantal systems, and hence basic Euler systems, that are associated to $T$ and to $T^*(1)$.

To give a first concrete application of this general result, we specialize to representations of the form $\ZZ_p(j)$ for an odd prime $p$ and suitable integers $j$. In particular, if $K$ is a totally real field in which $p$ does not ramify, then we can construct a canonical local vertical determinantal system in this setting by means of a canonical `higher rank Coleman map' (see Remark \ref{higher col}) and then use the pre-image under this map of the Deligne-Ribet $p$-adic $L$-function to prove the following result (which is later stated precisely as Theorem \ref{main}).

\begin{theorem*}[Theorem \ref{main}]\label{th1}
Let $K$ be a totally real field and $p$ an odd prime that does not ramify in $K$.
Then there exists an Euler system $c$ of rank $[K:\QQ]$ for $\ZZ_p(1)$ over $K$ such that both
\begin{itemize}
\item[(i)] $c$ interpolates the values of the Dedekind zeta function of $K$ at all positive even integers, and
\item[(ii)] $c$ determines all higher Fitting ideals of the Selmer group of $\mathbb{G}_m$ over abelian extensions of $K$.
\end{itemize}
\end{theorem*}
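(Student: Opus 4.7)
The plan is to apply the functional equation of Theorem \ref{funceq} with the representation $T=\ZZ_p$, so that $T^*(1)=\ZZ_p(1)$, and to exploit the higher rank Coleman map of Remark \ref{higher col} in order to construct the required local vertical determinantal system on the $T$-side from the Deligne-Ribet $p$-adic $L$-function $\mathcal{L}_{K,p}$. In this way the Euler system $c$ is obtained as the image, under the functional equation, of a distinguished local object that encodes $p$-adic $L$-values on the other side of the duality.

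Concretely, the first step is to set up the higher rank Coleman map for $\ZZ_p$ over the relevant tower of abelian extensions of $K$. Using the hypothesis that $p$ does not ramify in $K$ (so that the local representation $\ZZ_p$ at each $\frp \mid p$ is unramified and the local Iwasawa cohomology is sufficiently well understood), I would show that the image of this Coleman map is an explicit submodule of the corresponding Iwasawa algebra, and that $\mathcal{L}_{K,p}$ lies in it. Taking the pre-image yields a canonical local vertical determinantal system $\epsilon^{\mathrm{loc}}$ whose specializations encode the values $\zeta_K(1-n)$ at positive even integers $n$, equivalently (via the complex functional equation) the values $\zeta_K(n)$ for those $n$. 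Applying Theorem \ref{funceq} translates $\epsilon^{\mathrm{loc}}$ into a global vertical determinantal system for $\ZZ_p(1)$, and the formalism of \cite{sbA} extracts from the latter a basic Euler system $c$ for $\ZZ_p(1)$ over $K$, whose rank is $[K:\QQ]$ since this is the rank attached to $\ZZ_p(1)$ in the basic Euler system framework when $K$ is totally real.

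Given this construction, property (i) is built in: the interpolation of $\zeta_K$ at positive even integers is the defining property of $\mathcal{L}_{K,p}$ (up to the complex functional equation), and Theorem \ref{funceq} is designed precisely so that this interpolation transfers compatibly across the Kummer duality. Property (ii) then follows from the general theory of \cite{bss}, which identifies the higher Fitting ideals of the $\ZZ_p(1)$-Selmer module, i.e.\ the Selmer group of $\mathbb{G}_m$ over abelian extensions of $K$, with ideals that can be read off from any basic Euler system for $\ZZ_p(1)$; since $c$ is basic by construction, the claim is immediate. The hardest part of the argument will be the first step, namely establishing the precise image statement for the higher rank Coleman map and locating $\mathcal{L}_{K,p}$ inside it: this requires careful control of the local Galois cohomology at primes above $p$, a determinantal comparison involving the Bloch-Kato exponential, and close attention to integrality issues arising in the tower. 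Once this local input is in place, the global output and its two stated properties follow from the architecture of Theorem \ref{funceq} together with the Euler system-to-Fitting-ideal machinery of \cite{bss}.
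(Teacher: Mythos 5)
Your overall architecture is the right one: the proof does combine the functional equation of Theorem \ref{funceq}, the higher rank Coleman map $\Phi_{\bm{x}}$, the Deligne-Ribet $p$-adic $L$-function and the basic Euler system machinery of \cite{sbA}. However, there are two genuine gaps in the way you have assembled these pieces.

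First, you never invoke the Iwasawa Main Conjecture (Theorem \ref{IMC}, proved by Wiles). You propose instead to ``show that the image of the Coleman map is an explicit submodule of the Iwasawa algebra and that $\mathcal{L}_{K,p}$ lies in it.'' But the Coleman map is a purely local object: its source is $\cU_\infty$ (local Iwasawa cohomology / local units) and its image and cokernel are completely described by the exact sequence (\ref{colseq}) — no $p$-adic $L$-function is involved at that stage, and there is in fact a rank mismatch, since $\mathcal{L}$ lives in (a localization of) $\Lambda$ while the target of the Coleman map is the free rank-$r$ $\Lambda$-module $\cO_{K_p}[[G_\infty]]$. What is actually needed is the global statement that $\mathcal{L}$ is a $\Lambda$-generator of ${\det}_\Lambda^{-1}(\rgamma_c(\cO_{K,S},\Lambda^\#(1)))$, i.e.\ of the module ${\rm VS}(\ZZ_p,K(\mu_{p^\infty})^+)$, which is precisely the content of Wiles' theorem as packaged in Theorem \ref{IMC}. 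Without this input, the element you construct is not known to be a generator of the module of basic Euler systems, and claim (ii) on Fitting ideals collapses: the identities $\im(c_F)=\Fitt^0_{\ZZ_p[\cG_F]}(H^2(\cO_{F,S(F)},\ZZ_p(1)))$ in \cite[Th.~2.27(ii)]{sbA} are only available for a generator of $\mathcal{E}^{\rm b}(\ZZ_p,\cK)$.

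Second, you misread the shape of Theorem \ref{funceq}. It does not ``translate a local vertical determinantal system into a global one.'' It produces a canonical isomorphism
$${\rm VS}^{\rm loc}(T,\cK)\simeq \Hom_{\cR[[\Gal(\cK/K)]]}\bigl({\rm VS}(T^*(1),\cK),\,{\rm VS}(T,\cK)^\#\bigr),$$
so a local system gives a \emph{homomorphism} between the two global modules. Concretely, the Coleman element $\Phi_{\bm{x}}$ (suitably normalised as $\Psi=\sigma_{D_K}\,e^+\Phi_{\bm{x}}$) is a basis of ${\rm VS}^{\rm loc}(\ZZ_p,K(\mu_{p^\infty})^+)^\#$, and via this correspondence it becomes an isomorphism ${\rm VS}(\ZZ_p(1),\cK)^\#\xrightarrow{\ \sim\ }{\rm VS}(\ZZ_p,\cK)$; one then pulls back the IMC-generator $\fz$ under this map and applies $\theta_{\ZZ_p(1),\cK}$. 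Thus your $\epsilon^{\rm loc}$ is not a local system at all: the pre-image of $\mathcal{L}$ already lives in the global module ${\rm VS}(\ZZ_p(1),\cK)^\#$. Finally, for claim (i), the interpolation does not ``transfer formally'' across the duality — one has to prove the explicit interpolation formula for $\Phi_{\bm{x}}$ against the period-regulator map (Theorem \ref{colint}), which in turn rests on the Bloch–Kato explicit reciprocity law and a careful discriminant normalisation; you flag this as ``the hardest part,'' which is accurate, but the normalisation by $\sigma_{D_K}$ and $e^+$ and the precise formula for $\vartheta_j^{\rm loc}(\Phi_j)$ need to be nailed down before (i) follows.
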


The interpolation property in claim (i) is stated precisely in Theorem \ref{main}(i) and asserts, roughly speaking, that for any positive even integer $j$ the `cyclotomic $j$-twist' of $c$ recovers the value of the $p$-truncated (that is, without Euler factors at $p$-adic places) Dedekind zeta function $\zeta_{K,\{p\}}(s)$ of $K$ at $s=j$. The proof of this result relies on an interpretation in terms of the functional equation for vertical determinantal systems of the fact that the Deligne-Ribet $p$-adic $L$-function interpolates $\zeta_{K,\{p\}}(1-j)$  and also requires us to prove the validity of the `local Tamagawa number conjecture' in certain new cases. Our main result in this regard is Theorem \ref{ltnc} and also has consequences for the validity of the Tamagawa number conjecture itself (see Theorem \ref{tnc evidence}).

Claim (ii) of Theorem 1 follows, essentially directly, from results of  \cite{sbA} and of Kurihara and the present authors in \cite{bks1}.

At this point, we should observe that Sakamoto \cite{sakamoto} has also recently constructed an Euler system of rank $[K:\QQ]$ for $\ZZ_p(1)$  over a totally real field $K$ and used it to give an equivariant generalization of the main result of Kurihara in \cite{kurihara}.  Sakamoto's construction also makes essential use of the Deligne-Ribet $p$-adic $L$-function 
but is otherwise quite different from ours. In fact, the construction of \cite{sakamoto} relies both on a non-canonical `rank reduction' technique for Euler systems (that does not use Coleman maps) and on a detailed technical analysis of Iwasawa-theoretic exterior power biduals that is used to define a generalization of the classical notion of characteristic ideal. 

In contrast, our construction follows as a rather formal consequence of the canonical functional equation for vertical determinantal systems and this difference of approach allows us to prove that the Euler system has natural interpolation properties and, at the same time, to avoid difficult auxiliary hypotheses such as the assumed vanishing of $\mu$-invariants and the need to project to suitable `components' of the cohomology groups of $\ZZ_p(1)$ over abelian extensions of $K$ (both of which seem to be essential to the approach of \cite{sakamoto}).


Further, in an attempt to extend the interpolation property in Theorem \ref{th1}(i) to odd positive integers we are led to formulate (in Conjecture \ref{pBC}) a variant of the `$p$-adic Beilinson conjecture' that is formulated by Besser, Buckingham, de Jeu and Roblot in \cite{BBJR}. Our conjecture is naturally formulated in terms of the  `generalized Stark elements' $\eta_{K}(j) $ introduced by Kurihara and the present authors in \cite{bks2-2} (see Definition \ref{gse def}). 
We note that $\eta_{K}(j)$ is defined in terms of the value of the $p$-truncated Dedekind zeta function of $K$ at $s=j$.

In this context, the construction of Theorem \ref{th1} can be  interpreted as a relation between the elements $\eta_{K}(1-j)$ and $\eta_{K}(j)$ for even positive integers $j$ and in Theorem \ref{CI} we prove a precise relation between $\eta_{K}(1-j)$ and $\eta_{K}(j)$ for odd positive $j$.

To give more details we set $r:=[K:\QQ]$, and for an odd integer $j>1$ note that $\eta_{K}(j)$ and $\eta_{K}(1-j)$ are respectively elements of $\CC_p$ and $ \CC_p\otimes_{\ZZ_p} {\bigwedge}_{\ZZ_p}^{r} H^1(\cO_{K}[1/p],\ZZ_p(j))$. We introduce a `higher rank Coates-Wiles homomorphism'
$$\Phi_j \in {\bigwedge}_{\QQ_p}^r H^1(K \otimes_\QQ \QQ_p,\QQ_p(j))$$
that is canonical up to sign, and in the case $K=\QQ$ coincides (up to sign) with $(p^{j-1}-1)$ times the classical Coates-Wiles homomorphism (see Definition \ref{defCW} and Remark \ref{remCW}). We write ${\rm loc}_p:  \CC_p\otimes_{\ZZ_p} {\bigwedge}_{\ZZ_p}^{r} H^1(\cO_{K}[1/p],\ZZ_p(j)) \to  \CC_p\otimes_{\ZZ_p} {\bigwedge}_{\ZZ_p}^{r} H^1(K \otimes_\QQ \QQ_p,\ZZ_p(j))$ for the map induced by the localization map at $p$ and $D_K$ for the discriminant of $K$.

\begin{theorem*}[Theorem \ref{CI}]\label{th2}
Let $K$ be a totally real field and $p$ an odd prime that does not ramify in $K$.
Then for each odd integer $j>1$ one has
$${\rm loc}_p(\eta_{K}(1-j))= \pm \eta_{K}(j)\cdot D_K^j \cdot \Phi_j \text{ in }\CC_p\otimes_{\ZZ_p} {\bigwedge}_{\ZZ_p}^r H^1(K \otimes_\QQ \QQ_p,\ZZ_p(j)).$$
\end{theorem*}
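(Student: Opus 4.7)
The plan is to apply the functional equation for vertical determinantal systems (Theorem \ref{funceq}) to the Kummer-dual pair $T=\ZZ_p(1-j)$ and $T^{*}(1)=\ZZ_p(j)$, and then to match the two sides of the resulting algebraic identity with the classical analytic functional equation for $\zeta_{K,\{p\}}(s)$ relating $s=1-j$ and $s=j$. From this viewpoint, the asserted formula is a $p$-adic incarnation of the Riemann--Hecke functional equation, in which the archimedean Gamma-factors are absorbed $p$-adically into the higher rank Coates--Wiles element $\Phi_j$ and the various discriminant contributions collapse to the integral power $D_K^{j}$.

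First I would verify that for odd $j>1$ the function $\zeta_{K,\{p\}}(s)$ has a zero of exact order $r$ at $s=1-j$ (since $\Gamma_\RR(s)^{r}$ has a pole of order $r$ there while the completed zeta-function remains finite and non-zero at $s=j$), so that the generalized Stark element $\eta_K(1-j)$ of \cite{bks2-2} represents the leading coefficient of $\zeta_{K,\{p\}}$ at $s=1-j$ and is a non-zero element of the stated exterior power. Next I would recognise $\Phi_j$ as the canonical local vertical determinantal element attached, by the methods of \cite{sbA}, to the complex $\rgamma(K\otimes_{\QQ}\QQ_p,\ZZ_p(j))$, so that for odd $j$ it plays precisely the role that the higher rank Coleman map plays for even $j$ in the proof of Theorem \ref{main}. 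Comparison with the classical Coates--Wiles homomorphism in the case $K=\QQ$ then pins down the normalisation up to the sign accounted for by the $\pm$.

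With these preparations in place, the heart of the argument is to apply Theorem \ref{funceq} to transport $\eta_K(1-j)$ to the local side and to compare with $\eta_K(j)\cdot\Phi_j$. On the global side, the expression $\zeta_{K,\{p\}}^{*}(1-j)$ defining $\eta_K(1-j)$ is converted by the analytic functional equation into $\zeta_{K,\{p\}}(j)$ times a product of Gamma-factors and a half-integer power of $D_K$; on the local side, Theorem \ref{funceq} identifies the image of $\eta_K(1-j)$ under $\mathrm{loc}_p$ with the pairing of the local determinantal element $\Phi_j$ against $\eta_K(j)$. Matching these two expressions yields the claimed identity, with the Gamma-factors subsumed into $\Phi_j$ (thanks in particular to the normalising factor $(p^{j-1}-1)$ that appears in the classical case $K=\QQ$) and the half-integer powers of $D_K$ from the analytic functional equation combined with a further half-integer power coming from the global-to-local Tate duality underlying Theorem \ref{funceq} to give the integral power $D_K^{j}$.

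The principal obstacle is the second step: a careful identification of $\Phi_j$ inside the framework of vertical determinantal systems, together with a verification that its image under the Bloch--Kato dual exponential really does arise as the canonical local element predicted by the functional equation, rather than only as a representative modulo local unit ambiguity. A subsidiary technical point will be the consistent book-keeping of signs and Gamma-factor normalisations through Theorem \ref{funceq} and the definition of $\eta_K(\cdot)$; the statement only requires these to combine coherently up to $\pm$, but individual sign contributions must still be controlled in order to ensure that the final identity holds in the claimed form.
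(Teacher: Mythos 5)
Your general strategy is in the right spirit --- the result is indeed a formal consequence of Artin--Verdier duality (the commutativity of diagram~(\ref{fe}) in Proposition~\ref{fe compatible}) together with an explicit interpolation property of $\Phi_j$ --- but you identify the essential step as an unresolved ``principal obstacle'' when it has in fact already been established in the paper as Theorem~\ref{colint}. That theorem, whose proof rests on the Bloch--Kato explicit reciprocity law via Lemma~\ref{exp rec}, gives precisely the normalization you say you cannot yet pin down:
$\vartheta_j^{\rm loc}(\Phi_j) = \pm D_K^{-j}\,\zeta^*_{K,\{p\}}(1-j)/\zeta^*_{K,\{p\}}(j)$.
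Once this is in hand, the argument is immediate: one simply feeds the definitions of $\eta_{K,S}(1-j)$ and $\eta_{K,S}(j)$ (as preimages of leading terms under $\vartheta_{\ZZ_p(1-j)}$ and $\vartheta_{\ZZ_p(j)}$) into diagram~(\ref{fe}). There is no need to invoke the much heavier Theorem~\ref{funceq} about vertical determinantal systems over abelian towers; the $\ZZ_p$-level statement~(\ref{fe}) already encapsulates the duality required, and using the Iwasawa-theoretic machinery would be a detour.

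Some of your accounting of the scalar factors is also off. The factor $(p^{j-1}-1)$ appearing in the case $K=\QQ$ is a $p$-adic Euler factor coming out of the Coleman power-series/explicit-reciprocity computation, not an absorbed archimedean Gamma-factor; the Gamma-factors are matched against the $(j-1)!^r$ produced by Lemma~\ref{exp rec}. Similarly, the integral power $D_K^{j}$ arises from combining $|D_K|^{j-\frac 12}$ in the analytic functional equation (Lemma~\ref{fe0}), $\sqrt{|D_K|}$ in the period computation $\alpha_j$ (Lemma~\ref{compute}), and $D_K^{-1}$ in the explicit reciprocity law (Lemma~\ref{exp rec}); none of these half-integer contributions comes from ``global-to-local Tate duality underlying Theorem~\ref{funceq}'' as you assert. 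You would do well to simply cite Theorem~\ref{colint} and Proposition~\ref{fe compatible}, at which point the proof reduces to the one-line observation made in the paper (noting that if $H^2(\cO_{K,S},\QQ_p(1-j))\neq 0$ both sides vanish and the claim is trivial).
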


We find that this result leads to 
a natural conjectural generalization of the classical `Coleman-Ihara formula' in \cite{ihara} in the case that $K = \QQ$ (see Conjecture \ref{GCI} and Proposition \ref{CIP}). Such a link seems  striking and also appears to be new.

Finally, we note that in a future article, we will also formulate, and provide evidence for,  conjectural congruence relations between $\eta_{K}(j)$ and $\eta_{K}(k)$ for general integers $j$ and $k$ that are of a very different nature to the `functional equation' relations we focus on here.

The basic contents of the article is as follows. In \S\ref{sec3} and \S\ref{gsetn} we deal with preliminary material relating to Tamagawa number conjectures and generalized Stark elements and, in particular, construct a canonical higher rank Coleman map (see Remark \ref{higher col}). In \S\ref{fe vs} we introduce a natural notion of `local vertical determinantal systems' and use it to establish a functional equation for the vertical determinantal systems that are studied in \cite{sbA}. In \S\ref{construct section} we combine a special case of the main result of \S\ref{fe vs} together
 with results from \S\ref{sec3} and \S\ref{gsetn} in order to prove a precise version of Theorem \ref{th1}. Finally, in \S\ref{gci section} we prove Theorem \ref{th2} and discuss links between our approach, the $p$-adic Beilinson conjecture of Besser et al \cite{BBJR} and the Coleman-Ihara formula discussed in \cite{ihara}.

\subsection{General notation and convention} \label{notation}
\subsubsection{}For a commutative unital ring $R$ we write $D^{\rm perf}(R)$ for the derived category of perfect complexes of $R$-modules and $\det_R(-)$ for the determinant functor on $D^{\rm perf}(R)$ that was originally constructed by Knudsen and Mumford in \cite{KM} and later clarified by Knudsen in \cite{knudsen}. We note, however, that, whilst $\det_R(-)$ takes values in the Picard category of graded invertible $R$-modules we prefer, for convenience, to omit any explicit reference to gradings (since in the present context this does not lead to any confusion).

We denote the dual $\Hom_R(X,R)$ of an $R$-module $X$ by $X^\ast$. The functor ${\det}^{-1}_R(-)$ is then equal to ${\det}_R(-)^\ast$. For $X \in D^{\rm perf}(R)$, the canonical isomorphism
$${\det}_R(X) \otimes_R {\det}_R^{-1}(X) \xrightarrow{\sim} R; \ a\otimes f \mapsto f(a)$$
is used frequently and called the `evaluation map'.

For a non-negative integer $r$, the $r$-th exterior power bidual of an $R$-module $X$ is defined by
$${\bigcap}_R^r X:=\left({\bigwedge}_R^r (X^\ast)\right)^\ast.$$
For basic properties, see \cite[Appendix A]{sbA}.


For a field $E$ of characteristic $0$, we often abbreviate $E \otimes - $ to $E(-)$. Here $\otimes$ is one of $\otimes_\ZZ, \otimes_{\ZZ_p}, \otimes_\QQ, \otimes_{\QQ_p}$, depending on the context.

The following observation is used frequently: for a perfect complex $C$ of $\ZZ_p$-modules such that $C \lotimes_{\ZZ_p}\QQ_p$ is acyclic outside degree one, we have a canonical isomorphism
\begin{eqnarray}\label{qp}
\QQ_p\otimes_{\ZZ_p}{\det}_{\ZZ_p}^{-1}(C) \simeq {\bigwedge}_{\QQ_p}^d H^1(C \lotimes_{\ZZ_p}\QQ_p),
\end{eqnarray}
where $d:=\dim_{\QQ_p}(H^1(C\lotimes_{\ZZ_p}\QQ_p))$.

\subsubsection{}\label{fixed}
In the sequel we fix a number field $K$ and an algebraic closure $\overline K$ of $K$ and we set $G_K := \Gal(\overline K/K)$. We regard $\overline K \subset \CC$.

We fix an odd prime number $p$. We also fix an isomorphism $\CC \simeq \CC_p$.

For each natural number $n$ we write $\mu_{p^n}$ for the group of $p^n$-th roots of unity in $\overline K \subset \CC$. Note that we have a canonical generator $\zeta_{p^n}:=e^{2\pi \sqrt{-1}/p^n}$ of $\mu_{p^n}$, which determines a canonical generator $\xi:=(\zeta_{p^n})_n$ of $\ZZ_p(1)=\varprojlim_n \mu_{p^n}$.

For a finite set $S$ of places of $K$, we write $\zeta_{K,S}(s)$ for the $S$-truncated Dedekind zeta function of $K$. For any integer $j$, we denote its leading term at $s=j$ by $\zeta_{K,S}^\ast(j)$. We regard $\zeta_{K,S}^\ast(j) \in \CC_p$ via the fixed isomorphism $\CC \simeq \CC_p$.

We write $D_K$ for the discriminant of $K$, which is by definition
$$D_K:=\det({\rm Tr}_{K/\QQ}(x_i x_j)),$$
where $\{x_1,\ldots, x_r\}$ is a $\ZZ$-basis of $\cO_K$ (with $r:=[K:\QQ]$) and ${\rm Tr}_{K/\QQ}: K\to \QQ$ denotes the trace map.

We write $S_\infty(K)$ and $S_p(K)$ for the set of infinite and $p$-adic places of $K$. We set
$$K_p:=K\otimes_\QQ\QQ_p \simeq \prod_{v \in S_p(K)}K_v.$$

For any place $v$ of $K$, we fix an algebraic closure $\overline K_v$ of $K_v$ and an embedding $\overline K \hookrightarrow \overline K_v$. In particular, we regard $\mu_{p^n} \subset \overline K_v$ and $G_{K_v}:=\Gal(\overline K_v/K_v) \subset G_K$. If $v $ is an infinite place, we identify $\overline K_v=\CC$. 

For a finite place $v$ of $K$, we write ${\N}v$ for the cardinality of the residue field at $v$.

For a $\ZZ[1/2][\Gal(\CC/\RR)]$-module $X$, we set
$$X^+:=e^+ X,$$
where $e^+:=(1+c)/2$ with $c$ denoting the complex conjugation.

We use the standard notation for Galois (\'etale) cohomology. In particular, we write $\rgamma(\cO_{K,S},-)$, $\rgamma_c(\cO_{K,S},-)$ and $\rgamma_f(K,-)$ for $S$-cohomology, compactly supported $S$-cohomology and Bloch-Kato Selmer complexes respectively.


\subsubsection{}\label{labeling}

{\it Throughout this article, we fix a labelling
\[ \{\sigma_i: 1\le i\le [K:\QQ]\}\]
of the set of embeddings $\{\sigma: K \hookrightarrow \CC\}$. }This choice then determines an ordered $\QQ$-basis of the Betti cohomology space
$$H_K(j):=H_B^0(\Spec K(\CC),\QQ(j)) =\bigoplus_{\sigma: K\hookrightarrow \CC}(2\pi\sqrt{-1})^j \QQ$$
and ordered $\ZZ_p$-bases of both
$$\bigoplus_{\sigma: K\hookrightarrow \CC}\ZZ_p(j) $$
and
$$Y_K(j):=\bigoplus_{v \in S_\infty(K)}H^0(K_v,\ZZ_p(j))=\left(\bigoplus_{\sigma: K\hookrightarrow \CC}\ZZ_p(j)\right)^+$$
for any integer $j$. (For an explicit choice of bases, see \cite[\S 2.1]{bks2-2}.) Thus we have identifications
$${\det}_\QQ (H_K(j))=\QQ, \ {\det}_{\ZZ_p}\left(\bigoplus_{\sigma: K\hookrightarrow \CC}\ZZ_p(j) \right)= \ZZ_p \text{ and }{\det}_{\ZZ_p}(Y_K(j))=\ZZ_p.$$

Set $S:=S_\infty(K)\cup S_p(K)$. Then, since $p$ is odd, the Artin-Verdier Duality Theorem gives rise (via, for example, \cite[\S5, Lem. 12(b)]{BFetnc}) to an exact triangle in $D^{\rm perf}(\ZZ_p)$
\[\rgamma_c(\cO_{K,S},\ZZ_p(j))\to \DR\!\Hom_{\ZZ_p}(\rgamma(\cO_{K,S},\ZZ_p(1-j)),\ZZ_p[-3])\to Y_K(j)[0] \]
and hence to a canonical isomorphism
\begin{equation}\label{av isom} {\det}_{\ZZ_p}^{-1}(\rgamma_c(\cO_{K,S},\ZZ_p(j))) \simeq {\det}_{\ZZ_p}^{-1}(\rgamma(\cO_{K,S},\ZZ_p(1-j))) \otimes_{\ZZ_p} {\det}_{\ZZ_p}(Y_K(j)).\end{equation}
Taking account of the identification ${\det}_{\ZZ_p}(Y_K(j))=\ZZ_p$ above, we thereby obtain an isomorphism
\begin{eqnarray}\label{cisom}
{\det}_{\ZZ_p}^{-1}(\rgamma_c(\cO_{K,S},\ZZ_p(j))) \simeq {\det}_{\ZZ_p}^{-1}(\rgamma(\cO_{K,S},\ZZ_p(1-j)))
\end{eqnarray}
that will be used frequently in later sections.

\section{Coleman maps and local Tamagawa numbers}\label{sec3}

In this section we set $r:=[K:\QQ]$.


The aim of this section is to prove the `local Tamagawa number conjecture' when $p$ does not ramify in $K$ (see Theorem \ref{ltnc}). In the proof, we introduce a `higher rank Coates-Wiles homomorphism' in Definition \ref{defCW} and prove its interpolation property in Theorem \ref{colint}, which is used in later sections.


\subsection{The local Tamagawa number conjecture}

In this subsection, we formulate the local Tamagawa number conjecture for number fields (Conjecture \ref{localTNC}) and state the main result in this section (Theorem \ref{ltnc}).

For each integer $j$,
%
%
%
%
note that the decomposition $\CC=\RR(j) \oplus \RR(j-1)$ induces an isomorphism
\begin{eqnarray}\label{jj}
\RR  K \simeq  \RR H_K(j)^+ \oplus \RR H_K(j-1)^+.
\end{eqnarray}
(See \cite[(1)]{bks2-2}.) By composing this with the isomorphism
\begin{eqnarray}\label{decomp}
 \RR H_K(j)^+ \oplus \RR H_K(j-1)^+ \simeq \RR H_K(j); \ (x,y) \mapsto 2x + \frac{1}{2}\cdot \per \cdot  y,
 \end{eqnarray}
we obtain an isomorphism
\begin{eqnarray}\label{period}
\RR K \simeq \RR H_K(j).
\end{eqnarray}
The necessity of $2$ and $1/2$ in the definition of (\ref{decomp}) was observed by Kato \cite[p.8]{katolecture2} (see also \cite[(4) in \S 3.5.3]{FK}). Although we assume $p$ is odd and so difference by $2$-power is not essential, this modification is necessary to obtain a neat interpolation property of Coleman maps, which we prove in Theorem \ref{colint} below.

Also, note that there is a canonical isomorphism
\begin{eqnarray}\label{betti}
\bigoplus_{\sigma: K\hookrightarrow \CC}\QQ_p(j) \simeq \QQ_p H_K(j).
\end{eqnarray}

For any positive integer $j$, let
$$\log_{\QQ_p(j)}: H^1_f(K_p,\QQ_p(j)) \xrightarrow{\sim} K_p$$
denote the Bloch-Kato logarithm map. Note that $H^1(K_p,\QQ_p(j))=H^1_f(K_p,\QQ_p(j))$ if $j>1$.

\begin{definition}\label{loc def}
Let $j$ be a positive integer. When $j>1$, we define
$$\vartheta_j^{\rm loc} : \CC_p\otimes_{\ZZ_p}\left( {\det}_{\ZZ_p}^{-1}(\rgamma(K_p, \ZZ_p(j))) \otimes_{\ZZ_p} {\det}^{-1}_{\ZZ_p} \left( \bigoplus_{\sigma: K \hookrightarrow \CC} \ZZ_p(j)\right) \right) \xrightarrow{\sim} \CC_p$$
by the composition
\begin{eqnarray*}
 &&\CC_p\otimes_{\ZZ_p}\left( {\det}_{\ZZ_p}^{-1}(\rgamma(K_p, \ZZ_p(j))) \otimes_{\ZZ_p} {\det}^{-1}_{\ZZ_p} \left( \bigoplus_{\sigma :K \hookrightarrow \CC} \ZZ_p(j)\right) \right) \\
 &\stackrel{(\ref{qp}) \text{ and }(\ref{betti})}{\simeq}& \CC_p \otimes_{\QQ_p} \left({\bigwedge}_{\QQ_p}^r H^1(K_p,\QQ_p(j)) \otimes_{\QQ} {\bigwedge}_\QQ^r H_K(j)^\ast \right)\\
 &\stackrel{\log_{\QQ_p(j)}}{\simeq}&\CC_p \otimes_\QQ\left( {\bigwedge}_\QQ^r K \otimes_{\QQ}  {\bigwedge}_\QQ^r H_K(j)^\ast\right) \\
 &\stackrel{(\ref{period})}{\simeq}& \CC_p \otimes_\QQ\left( {\bigwedge}_\QQ^r H_K(j) \otimes_{\QQ}  {\bigwedge}_\QQ^r H_K(j)^\ast\right) \\
 &\stackrel{\text{evaluation}}{\simeq} &\CC_p.
\end{eqnarray*}
When $j=1$, we define $\vartheta_j^{\rm loc}$ in a similar way, by using the canonical exact sequence
$$0 \to H^1_f(K_p, \QQ_p(1)) \to H^1(K_p,\QQ_p(1)) \to \prod_{v\in S_p(K)}\QQ_p \to 0$$
and the canonical isomorphism
$$H^2(K_p, \QQ_p(1)) \simeq \prod_{v \in S_p(K)}\QQ_p.$$
\end{definition}

We now state the local Tamagawa number conjecture for the pair $(h^0(K)(j),p)$.

\begin{conjecture}[{${\rm TNC}_p^{\rm loc}(h^0(K)(j))$}]\label{localTNC}
For every positive integer $j$ there exists a (unique) $\ZZ_p$-basis
$$z_j \in {\det}_{\ZZ_p}^{-1}(\rgamma(K_p, \ZZ_p(j))) \otimes_{\ZZ_p} {\det}^{-1}_{\ZZ_p} \left( \bigoplus_{\sigma: K \hookrightarrow \CC} \ZZ_p(j)\right) $$
with the property that
$$\vartheta_j^{\rm loc}(z_j)=  \frac{\zeta_{K, \{p\}}^\ast(1-j)}{\zeta_{K,\{p\}}^\ast(j)} .$$
\end{conjecture}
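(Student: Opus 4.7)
My plan is to reduce Conjecture \ref{localTNC} to known (or expected) cases of Kato's local $\epsilon$-constant conjecture via the classical functional equation for $\zeta_{K,\{p\}}$. First observe that the module
\[ M := {\det}_{\ZZ_p}^{-1}(\rgamma(K_p, \ZZ_p(j))) \otimes_{\ZZ_p} {\det}^{-1}_{\ZZ_p} \left(\bigoplus_{\sigma: K \hookrightarrow \CC} \ZZ_p(j)\right) \]
is free of $\ZZ_p$-rank one, since local duality shows that $\QQ_p\otimes_{\ZZ_p}\rgamma(K_p,\ZZ_p(j))$ is cohomologically concentrated in degree one (after splitting off the canonical $H^2$-contribution in the case $j=1$, which is already isolated in Definition \ref{loc def}). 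Uniqueness of $z_j$ is therefore automatic, and the task reduces to showing that, for any fixed $\ZZ_p$-basis $z_j^0$ of $M$, the quotient $\vartheta_j^{\rm loc}(z_j^0)\cdot \zeta_{K,\{p\}}^\ast(j)/\zeta_{K,\{p\}}^\ast(1-j)$ lies in $\ZZ_p^\times$.

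Next I would invoke the functional equation for $\zeta_{K,\{p\}}(s)$ to express this ratio as a product of (i) a discriminant factor involving $|D_K|^{j-1/2}$, (ii) an explicit archimedean $\Gamma$-factor built from $\Gamma(j)$ and powers of $2\pi\sqrt{-1}$, and (iii) a product $\prod_{v\in S_p(K)}L_v(K,1-j)L_v(K,j)^{-1}$ of inverse local Euler factors at the $p$-adic places. By Kato's design, as recalled via \cite[p.8]{katolecture2} in the discussion of (\ref{decomp}), the archimedean $\Gamma$-factor cancels precisely against the normalising constants $2$ and $\tfrac{1}{2}\cdot 2\pi\sqrt{-1}$ built into the isomorphism (\ref{period}), while the discriminant contribution $|D_K|$ is absorbed by the change-of-basis matrix comparing the natural $\ZZ$-structure on $\cO_K$ with the $\ZZ$-structure on $H_K(j)$ determined by the fixed labelling of embeddings from \S\ref{labeling}.

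The remaining content then splits as a product of purely local assertions, indexed by $v \in S_p(K)$, obtained by decomposing $\rgamma(K_p,\ZZ_p(j))=\bigoplus_{v|p}\rgamma(K_v,\ZZ_p(j))$ and the archimedean sum along the factors $K_v$. Each local factor is precisely Kato's local $\epsilon$-isomorphism conjecture (in the formulation of Fukaya-Kato) for the representation $\ZZ_p(j)$ of $G_{K_v}$. In the cases where this local conjecture is already known one deduces the corresponding case of Conjecture \ref{localTNC}: for $K_v$ unramified over $\QQ_p$ this is exactly Theorem \ref{ltnc}; for $K_v=\QQ_p$ it goes back to Kato's explicit reciprocity law; and for $K_v$ abelian over $\QQ_p$ many further cases follow from the work of Benois-Berger, Loeffler-Venjakob-Zerbes and Bley-Cobbe.

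The principal obstacle is precisely this local step for general ramified places: Kato's local $\epsilon$-isomorphism conjecture for $\ZZ_p(j)$ as a $G_{K_v}$-representation when $K_v/\QQ_p$ is a wildly ramified non-abelian extension remains open, and proving Conjecture \ref{localTNC} in the generality stated is in effect equivalent to resolving that local conjecture. Apart from this single hard input, every other ingredient (the global functional equation, bookkeeping of $\Gamma$-factors against the period normalisation, and the compatibility of $\det_{\ZZ_p}(-)$ with the local decomposition at $p$) is formal and amounts to routine verification.
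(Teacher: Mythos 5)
The statement you set out to prove is presented in the paper as a conjecture, and your proposal does not in fact prove it: by your own admission the decisive input --- Kato's local $\epsilon$-isomorphism conjecture for $\ZZ_p(j)$ over ramified $K_v$ --- is open, so what you have is a (plausible) reduction, not a proof. That is consistent with the paper, which establishes the statement only when $p$ is unramified in $K$ (Theorem \ref{ltnc}); but note that within your own scheme the unramified case is handled by simply citing Theorem \ref{ltnc}, i.e.\ the very partial result whose proof is the paper's actual content here, so the proposal contains no independent argument even for the case that can be proved. A second soft spot is the claim that, once the functional equation is invoked, the rest is ``formal and routine'': the factor $|D_K|^{j-\frac12}$ is a $p$-adic unit exactly when $p$ is unramified in $K$, and in the ramified case the integral statement requires this non-unit to cancel precisely against the $p$-adic valuations of the local $\epsilon$-factors at ramified places. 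That matching is the substance of the local conjecture, not bookkeeping, and it is exactly where the paper's unramified hypothesis enters.

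For comparison, the paper's proof of the unramified case is direct and does not pass through local $\epsilon$-isomorphisms. After reducing to $j>1$ (the case $j=1$ being the class number formula), the authors construct an explicit $\ZZ_p$-basis of the module in question, namely the higher rank Coates-Wiles homomorphism $\Phi_j=\chi_{\rm cyc}^j(\Phi_{\bm{x}})$ obtained by twisting the basis $\Phi_{\bm{x}}$ built from the Coleman exact sequence (\ref{colseq}), and then prove the interpolation formula $\vartheta_j^{\rm loc}(\Phi_j)=\pm D_K^{-j}\,\zeta_{K,\{p\}}^\ast(1-j)/\zeta_{K,\{p\}}^\ast(j)$ (Theorem \ref{colint}) by combining the functional equation (Lemma \ref{fe0}), an explicit period computation (Lemma \ref{compute}) and the Bloch-Kato explicit reciprocity law (Lemma \ref{exp rec}); since $D_K\in\ZZ_p^\times$ in the unramified case, rescaling by $\sigma_{D_K}$ yields the required basis $z_j$. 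Your route via the Fukaya-Kato formulation and known abelian cases is essentially the alternative the authors themselves flag in Remark \ref{rem ltnc} (Benois-Nguyen Quang Do, Kato, Huber-Kings, Burns-Flach) and deliberately avoid in favour of this short self-contained computation, which moreover yields the stronger uniform interpolation statement (Remark \ref{modify}) that is needed later for Theorem \ref{main}(i).
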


We can now state the main result of this section.

\begin{theorem}\label{ltnc}
If $p$ does not ramify in $K$, then ${\rm TNC}_p^{\rm loc}(h^0(K)(j))$ is valid for every positive integer $j$.
\end{theorem}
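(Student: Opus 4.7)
The plan is to give an explicit place-by-place construction of $z_{j}$. Since $p$ does not ramify in $K$ we have $K_{p}=\prod_{v\in S_{p}(K)}K_{v}$ with each $K_{v}/\QQ_{p}$ unramified, and the complex $\rgamma(K_{p},\ZZ_{p}(j))$, the module $\bigoplus_{\sigma}\ZZ_{p}(j)$ and the Bloch--Kato logarithm $\log_{\QQ_{p}(j)}$ all decompose compatibly according to the places $v\mid p$ via the fixed identification $\CC\simeq\CC_{p}$; this reduces the determinantal identity to a product of statements indexed by $v\mid p$. On the $L$-value side, the functional equation of $\zeta_{K}$ together with insertion of the missing Euler factors at $p$ rewrites $\zeta^{\ast}_{K,\{p\}}(1-j)/\zeta^{\ast}_{K,\{p\}}(j)$ as the product of a global archimedean factor, built from $D_{K}^{1/2}$, the $\Gamma$-factors and powers of $2\pi\sqrt{-1}$, together with the local Euler-factor quotient $\prod_{v\mid p}(1-\N v^{j-1})/(1-\N v^{-j})$.

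To construct $z_{j}$ I would take the pre-image, under the higher rank Coleman map of Remark \ref{higher col}, of a canonical element built from the Deligne--Soul\'e/cyclotomic norm-compatible system along $K(\mu_{p^{\infty}})/K$; Coleman's power-series construction applies precisely because $p$ is unramified. The core calculation is then the interpolation property of the higher rank Coates--Wiles homomorphism $\Phi_{j}$ of Definition \ref{defCW}, namely Theorem \ref{colint}: one shows that $\Phi_{j}$ converts the Coleman side into exactly the Euler-factor product $\prod_{v\mid p}(1-\N v^{j-1})$, while the composition of $\log_{\QQ_{p}(j)}$ with the Betti comparison (\ref{betti}), the period isomorphism (\ref{period}) and the evaluation map accounts for the remaining ratio $\zeta^{\ast}_{K}(1-j)/\zeta^{\ast}_{K}(j)$. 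Passing from the pair of arrows in the definition of $\vartheta_{j}^{\rm loc}$ to a numerical identity then yields the required equality.

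For $j>1$ this is direct because $\log_{\QQ_{p}(j)}$ is an isomorphism and $H^{1}=H^{1}_{f}$. The case $j=1$ is different: one must use the auxiliary exact sequence of Definition \ref{loc def} incorporating the boundary onto $\prod_{v\mid p}\QQ_{p}$ and the identification $H^{2}(K_{p},\QQ_{p}(1))\simeq\prod_{v\mid p}\QQ_{p}$, and verify an $\{p\}$-truncated form of the Dirichlet class number formula via explicit cyclotomic units; this is where the argument is most delicate. The principal technical obstacle throughout, and the reason for working with the packaged $\Phi_{j}$ rather than $r$ scalar Coates--Wiles maps, is keeping the many normalization factors coherent: the Kato $2,\,1/2$ modification in (\ref{decomp}), the $(2\pi\sqrt{-1})^{j}$ appearing via (\ref{betti}), the discriminant $D_{K}$ entering (\ref{period}), and the chosen generator $\xi$ of $\ZZ_{p}(1)$ must balance so that $\vartheta_{j}^{\rm loc}(z_{j})$ is exactly the zeta ratio and not an explicit rational multiple of it; checking this compatibility occupies the bulk of the work.
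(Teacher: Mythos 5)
Your high-level plan is essentially the same as the paper's: reduce to the interpolation statement (Theorem~\ref{colint}), and prove that by combining the functional equation of $\zeta_K$, the Coleman exact sequence, and the Bloch--Kato explicit reciprocity law. However, the proposed \emph{construction} of $z_j$ contains a genuine conceptual error. You propose to define $z_j$ as the pre-image, under the higher rank Coleman map of Remark~\ref{higher col}, of ``a canonical element built from the Deligne--Soul\'e/cyclotomic norm-compatible system''. But the map in Remark~\ref{higher col} is an isomorphism between the two \emph{global} determinant modules $\det_\Lambda^{-1}(\rgamma_c(\cO_{K,S},\Lambda^\#))^\#$ and $\det_\Lambda^{-1}(\rgamma_c(\cO_{K,S},\Lambda^\#(1)))$, and the Deligne--Soul\'e/Deligne--Ribet element is a basis of the latter. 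Its pre-image lies in the former, which is still a \emph{global} $\Lambda$-module, not the local module $\det_{\ZZ_p}^{-1}(\rgamma(K_p,\ZZ_p(j)))\otimes\det_{\ZZ_p}^{-1}(\bigoplus_\sigma\ZZ_p(j))$ in which $z_j$ must live. The local Tamagawa number statement requires no global cohomological or $L$-function input at all: the candidate basis is simply $D_K^j\Phi_j$, where $\Phi_j=\chi_{\rm cyc}^j(\Phi_{\bm x})$ and $\Phi_{\bm x}$ is the intrinsically local element manufactured in~(\ref{phi def}) from the Coleman sequence~(\ref{colseq}) and a $\ZZ$-basis of $\cO_K$. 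The entire content of the theorem is the numerical identity $\vartheta_j^{\rm loc}(\Phi_j)=\pm D_K^{-j}\zeta^\ast_{K,\{p\}}(1-j)/\zeta^\ast_{K,\{p\}}(j)$, not the existence of a basis.

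Two smaller points. First, you anticipate $j=1$ as ``where the argument is most delicate'' and invoke cyclotomic units. The paper in fact disposes of $j=1$ in one line via the analytic class number formula; the nontrivial work is all in the range $j>1$. Second, the paper does not carry out a genuinely place-by-place computation: although the cohomology decomposes over $v\mid p$, the discriminant $D_K$ enters through the global trace form on $\cO_K$ (Lemma~\ref{exp rec}), and the interpolation constant is organized around the fixed $\ZZ$-basis $\{x_1,\dots,x_r\}$ rather than local bases at each $v$. The Euler factor you obtain from the Coleman map is also the full quotient $\prod_{v\mid p}(1-{\rm N}v^{j-1})/(1-{\rm N}v^{-j})$, not only the numerator.
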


The proof will be given in \S \ref{pf col}.

\begin{remark}\label{rem ltnc}
The key ingredients in the proof of Theorem \ref{ltnc} are the functional equation of $\zeta_K(s)$ and the classical explicit reciprocity law of Bloch and Kato \cite[Th. 2.1]{BK}. In fact, in the case $K=\QQ$ the validity of Theorem \ref{ltnc} is essentially verified in \cite[\S 6]{BK}, although the language used in loc. cit. is rather different from ours. For the same reason, the general case of Theorem \ref{ltnc} can presumably be derived (after suitable translation) from the sort of calculations that are made by Benois and Nguyen Quang Do in \cite{BN}. Nevertheless, we prefer to give a short direct proof of Theorem \ref{ltnc} both because the techniques it relies on will be used again in later sections and because, as far as we are aware, the verification of the precise form of Conjecture \ref{localTNC} in the case that $p$ is unramified in $K$ does not appear anywhere else in the literature.  (We note, however, in the special case that $K/\QQ$ is abelian, somewhat similar methods to ours are used in \cite{katolecture2}, \cite{BN}, \cite{HK} and \cite{BF2} to prove a natural $\Gal(K/\QQ)$-equivariant version of Conjecture \ref{localTNC}.) For direct consequences of Theorem \ref{ltnc}, see Theorem \ref{tnc evidence}.
\end{remark}

\subsection{Coleman maps}\label{coleman review}
In this subsection, we assume that $p$ is unramified in $K$.

The aim of this subsection is to construct a `higher rank Coleman map' $\Phi_{\bm{x}}$ in (\ref{phi def}) below (which is canonical up to sign). Our construction is motivated by Fukaya and Kato \cite[\S 3.6]{FK}.

We write $\mu_{p^\infty}$ for the union of $\mu_{p^n}$ over all $n$ and then set
$$ G_n := \Gal(K(\mu_{p^n})/K), \ G_\infty:=\Gal(K(\mu_{p^\infty})/K) \simeq \varprojlim_n G_n$$
and
$$\Lambda:=\ZZ_p[[G_\infty]].$$
For each $v \in S_p(K)$ we write $U_{K_v(\mu_{p^n})}^{(1)}$ for the group of principal units of $K_v(\mu_{p^n})$ and set
$$\cU_\infty:=\prod_{v \in S_p(K)}\varprojlim_n  U_{K_v(\mu_{p^n})}^{(1)} .$$
We also set
$$\cO_{K_p}:=\cO_K \otimes_\ZZ \ZZ_p \simeq \prod_{v\in S_p(K)}\cO_{K_v}.$$

We then recall (from either \cite[(4.7)]{BK} or the original article  \cite{coleman} of Coleman) that there exists a canonical exact sequence of $\Lambda$-modules
\begin{eqnarray}\label{colseq}0 \to \prod_{v \in S_p(K)} \ZZ_p(1) \to \cU_\infty \to \cO_{K_p}[[G_\infty]] \to \prod_{v \in S_p(K)} \ZZ_p(1) \to 0.
\end{eqnarray}
The middle map is called the Coleman map. Note that, to define the Coleman map, we need to choose a system of $p$-power roots of unity in $\overline K_v$ for each $v \in S_p(K)$ (which is used to characterize the Coleman power series). This choice is made by using the canonical basis $\xi$ of $H^0(\overline K, \ZZ_p(1))$ and the fixed embedding $\overline K \hookrightarrow \overline K_v$ (see \S \ref{fixed}).

Since $\Lambda$ is a regular ring, one can take ${\det}_\Lambda$ to any finitely generated $\Lambda$-modules and so the above sequence induces an isomorphism
\begin{eqnarray}
&&{\det}_\Lambda(\cU_\infty) \otimes_\Lambda {\det}_\Lambda^{-1}(\cO_{K_p}[[G_\infty]]) \label{colseq2}\\
  &\simeq& {\det}_\Lambda\left( \prod_{v \in S_p(K)} \ZZ_p(1)\right) \otimes_\Lambda {\det}_\Lambda^{-1} \left( \prod_{v \in S_p(K)} \ZZ_p(1)\right) \nonumber\\
&\simeq &\Lambda,\nonumber
\end{eqnarray}
where the last isomorphism is induced by the natural evaluation map.

We write $\Lambda^\#$ for the set $\Lambda$, regarded as endowed with its natural structure as a $\Lambda$-module and the action of $G_K$ that is given by setting $g(\lambda) := \lambda\cdot\overline{g}^{-1}$ for each $g \in G_K$ and $\lambda\in \Lambda$, where $\overline{g}$ denotes the image of $g$ in $G_\infty$.

Then Kummer theory gives a canonical exact sequence
$$ 0 \to \cU_\infty \to H^1(K_p,\Lambda^\#(1)) \to \prod_{v \in S_p(K)}\ZZ_p \to 0$$
and class field theory a canonical isomorphism
$$H^2(K_p, \Lambda^\#(1)) \simeq \prod_{v \in S_p(K)}\ZZ_p,$$
which combine to induce canonical isomorphism
$$ {\det}_\Lambda(\cU_\infty) \simeq {\det}_\Lambda^{-1}(\rgamma(K_p,\Lambda^\# (1))) \simeq {\det}_\Lambda(\rgamma(K_p,\Lambda^\# ))^\#, $$
where the last isomorphism is induced by the local duality. Here, by abuse of notation, for a $\Lambda$-module $X$ we denote by $X^\#$ the set $X$ on which $\Lambda$ acts via involution.
By combining this with (\ref{colseq2}), we obtain a canonical isomorphism
$$\Phi: {\det}_\Lambda(\rgamma(K_p,\Lambda^\# ))^\# \otimes_\Lambda {\det}_\Lambda^{-1}(\cO_{K_p}[[G_\infty]])  \simeq \Lambda.$$

{\it We now fix a $\ZZ$-basis $\bm{x}=\{x_1,\ldots,x_r\}$ of $\cO_K$}. This basis  determines an isomorphism of $\Lambda$-modules $ {\det}_\Lambda^{-1}(\cO_{K_p}[[G_\infty]])  \simeq \Lambda$ and hence also a composite isomorphism

$$\Phi_{\bm{x}}:  {\det}_\Lambda(\rgamma(K_p,\Lambda^\# ))^\# \simeq {\det}_\Lambda(\rgamma(K_p,\Lambda^\# ))^\# \otimes_\Lambda {\det}_\Lambda^{-1}(\cO_{K_p}[[G_\infty]])  \xrightarrow{\Phi} \Lambda.$$
We regard this isomorphism as an element
\begin{eqnarray}\label{phi def}
\Phi_{\bm{x}} \in  {\det}_\Lambda^{-1}(\rgamma(K_p,\Lambda^\# ))^\# \simeq \varprojlim_n  {\det}_{\ZZ_p[G_n]}^{-1}(\rgamma(K_p(\mu_{p^n}),\ZZ_p ))^\#.
\end{eqnarray}
Since $\bm{x}$ is a $\ZZ$-basis, $\Phi_{\bm{x}}$ is canonical up to sign.


\begin{remark}\label{higher col}
Set $S:=S_\infty(K)\cup S_p(K)$. Then in Theorem \ref{funceq} below we will show that there is a canonical one-to-one correspondence of the form
\begin{multline*}\{ \text{$\Lambda$-basis of }{\det}_\Lambda^{-1}(\rgamma(K_p,\Lambda^\# ))^\#\}\\ \simeq {\rm Isom}_{\Lambda}\left({\det}_\Lambda^{-1}(\rgamma_c(\cO_{K,S}, \Lambda^\#))^\#, {\det}_\Lambda^{-1}(\rgamma_c(\cO_{K,S}, \Lambda^\#(1)))\right).\end{multline*}
Via this correspondence, the element $\Phi_{\bm{x}}$ of $ {\det}_\Lambda^{-1}(\rgamma(K_p,\Lambda^\# ))^\#$ is regarded as an isomorphism
$$\Phi_{\bm{x}}: {\det}_\Lambda^{-1}(\rgamma_c(\cO_{K,S}, \Lambda^\#))^\# \xrightarrow{\sim} {\det}_\Lambda^{-1}(\rgamma_c(\cO_{K,S}, \Lambda^\#(1))).$$
We shall refer to this isomorphism as a `higher rank Coleman map' (and also see Remark \ref{modify} for a slight modification). Note also that, when $K$ is totally real, the Deligne-Ribet $p$-adic $L$-function is naturally regarded as a basis of $ {\det}_\Lambda^{-1}(\rgamma_c(\cO_{K,S}, \Lambda^\#(1)))^+$ (see Theorem \ref{IMC} for the detail).
\end{remark}

\subsection{The proof of Theorem \ref{ltnc}}\label{pf col}

We keep assuming that $p$ is unramified in $K$.

The case $j=1$ easily follows from the analytic class number formula and so, we may, and will, assume in the sequel that $j>1$.

Let $\chi_{\rm cyc}: G_\infty \xrightarrow{\sim} \ZZ_p^\times$ denote the cyclotomic character. The $j$-th power of $\chi_{\rm cyc}$ induces a surjection
$$\chi_{\rm cyc}^{j}: {\det}_\Lambda^{-1} (\rgamma(K_p, \Lambda^\#))^\# \to {\det}_{\ZZ_p}^{-1}(\rgamma(K_p,\ZZ_p(j))).$$
More precisely, it is the composition
\begin{multline}\label{twist map}
{\det}_\Lambda^{-1} (\rgamma(K_p, \Lambda^\#))^\# \xrightarrow{a \mapsto a \otimes \xi^{\otimes j}} {\det}_\Lambda^{-1} (\rgamma(K_p, \Lambda^\#))^\#\otimes_\Lambda \ZZ_p(j) \simeq  {\det}_{\ZZ_p}^{-1}(\rgamma(K_p,\ZZ_p(j))),
\end{multline}
where $\xi \in \ZZ_p(1)$ is the canonical basis (see \S \ref{fixed}) and the last isomorphism follows from \cite[Prop. 1.6.5(3)]{FK}.

For the later use, we give the following definition.
\begin{definition} \label{defCW}
Let $\Phi_{\bm{x}} \in {\det}_\Lambda^{-1} (\rgamma(K_p, \Lambda^\#))^\#$ be the basis constructed in (\ref{phi def}).
For any $j>1$, we define a {\it higher rank Coates-Wiles homomorphism} by
$$\Phi_j:= \chi_{\rm cyc}^{j}(\Phi_{\bm{x}}) \in {\det}^{-1}_{\ZZ_p}(\rgamma(K_p,\ZZ_p(j))) .$$
This is a $\ZZ_p$-basis by definition.
\end{definition}

\begin{remark}\label{remCW}
When $K=\QQ$, one checks by definition that
$$\Phi_j=\pm (p^{j-1}-1)\varphi_j^{\rm CW} \text{ in } H^1(\QQ_p,\QQ_p(j)) \stackrel{(\ref{qp})}{\simeq} {\det}_{\QQ_p}^{-1}(\rgamma(\QQ_p,\QQ_p(j))),$$
where $\varphi_j^{\rm CW} \in H^1(\QQ_p,\QQ_p(j))$ is the classical Coates-Wiles homomorphism defined in \cite[\S 2]{BK}.
In the general case $\Phi_j$ can be explicitly described as follows. First, note that we have a natural identification
$$H^1(K_p,\QQ_p(j))=\Hom_{\Lambda}(\cU_{\infty},\QQ_p(j)).$$
(See \cite[p. 342]{BK}.) Let
$${\rm Col}: \cU_\infty \to \cO_{K_p}[[G_\infty]]$$
be the Coleman map (i.e., the middle map in (\ref{colseq})). By taking the functor $\Hom_{\Lambda}(-,\QQ_p(j))$ to this map, we obtain a map
$${\rm Col}_j: \Hom_{\QQ_p}(K_p, \QQ_p(j)) \stackrel{f \mapsto f \circ \chi_{\rm cyc}^j}{\simeq} \Hom_{\Lambda}(\cO_{K_p}[[G_\infty]], \QQ_p(j)) \to \Hom_{\Lambda}(\cU_{\infty},\QQ_p(j)).$$
(One deduces from the exact sequence (\ref{colseq}) that this is in fact an isomorphism.) Since $\QQ_p(j)$ has a canonical basis $\xi^{\otimes j}$ (see \S \ref{fixed}), we can identify $\Hom_{\QQ_p}(K_p,\QQ_p(j))$ with $K_p^\ast:=\Hom_{\QQ_p}(K_p,\QQ_p)$. Let $\bm{x}=\{x_1,\ldots,x_r\}$ be the fixed $\ZZ$-basis of $\cO_K$, and $\{x_1^\ast ,\ldots, x_r^\ast\}$ be the dual basis. One sees that the image of $x_1^\ast \wedge \cdots \wedge x_r^\ast$ under the map
$${\bigwedge}_{\QQ_p}^r K_p^\ast \xrightarrow{{\rm Col}_j} {\bigwedge}_{\QQ_p}^r \Hom_{\Lambda}(\cU_{\infty},\QQ_p(j))={\bigwedge}_{\QQ_p}^r H^1(K_p,\QQ_p(j)) \stackrel{(\ref{qp})}{\simeq} {\det}_{\QQ_p}^{-1}(\rgamma(K_p,\QQ_p(j)))$$
coincides with $\Phi_j$.
\end{remark}

We now study interpolation properties of higher rank Coates-Wiles homomorphisms.
We use the identification
$${\det}^{-1}_{\ZZ_p} \left( \bigoplus_{\sigma: K \hookrightarrow \CC} \ZZ_p(j)\right) =\ZZ_p  $$
(see \S \ref{labeling}) to regard $\vartheta_j^{\rm loc}$ in Definition \ref{loc def} as a map
$$\vartheta_j^{\rm loc}: \CC_p {\det}^{-1}_{\ZZ_p}(\rgamma(K_p,\ZZ_p(j))) \xrightarrow{\sim} \CC_p.$$

We also note that the discriminant $D_K$ of $K$ belongs to $\ZZ_p^\times$ since $p$ is assumed to be unramified in $K$.

Theorem \ref{ltnc} is thereby reduced to the following result.

\begin{theorem}\label{colint}
We have
$$\vartheta_j^{\rm loc}(\Phi_j) =\pm  D_K^{-j} \cdot \frac{\zeta_{K, \{p\}}^\ast(1-j)}{\zeta_{K,\{p\}}^\ast(j)} .$$
\end{theorem}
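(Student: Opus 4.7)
The plan is to combine three ingredients: the explicit presentation of $\Phi_j$ given in Remark \ref{remCW}, the Bloch--Kato explicit reciprocity law \cite[Thm.\ 2.1]{BK}, and the classical functional equation of $\zeta_K(s)$. The first step is to unwind the left-hand side. By Remark \ref{remCW}, under the identification $H^1(K_p,\QQ_p(j)) = \Hom_\Lambda(\cU_\infty, \QQ_p(j))$, the class $\Phi_j$ is the wedge ${\rm Col}_j(x_1^\ast) \wedge \cdots \wedge {\rm Col}_j(x_r^\ast)$. Tracing through the definition of $\vartheta_j^{\rm loc}$, it follows that $\vartheta_j^{\rm loc}(\Phi_j)$ equals, up to the sign from the ordering of $\{\sigma_i\}$, the determinant of the $r \times r$ matrix whose $(i,k)$-entry is the $k$-th coordinate of $\log_{\QQ_p(j)}({\rm Col}_j(x_i^\ast)) \in \CC_p K_p$ in the basis of $\CC_p K_p$ obtained by transporting the fixed basis of $H_K(j)$ through the inverse of the period isomorphism (\ref{period}).

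Next I would invoke the explicit reciprocity law. Since $p$ is unramified in $K$, each completion $K_v$ is unramified over $\QQ_p$ and $\QQ_p(j)$ is crystalline at $v$ with crystalline Frobenius $\varphi_v\, p^{-j}$, where $\varphi_v$ is the arithmetic Frobenius on $K_v$. The Bloch--Kato explicit reciprocity law, in the form used in \cite[\S 2]{BK}, then identifies the composition
$$\log_{\QQ_p(j)} \circ {\rm Col}_j : K_v^{\ast} \longrightarrow K_v$$
(under the identification $K_v^\ast \simeq K_v$ coming from the trace form) with multiplication by $(j-1)!\cdot (1 - \varphi_v^{-1} p^{j-1})(1 - \varphi_v p^{-j})^{-1}$. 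Taking the product over $v \in S_p(K)$ yields a scalar equal to $((j-1)!)^r$ times the ratio of the Euler factors at $p$ that are present in $\zeta_K^\ast(1-j)$ and $\zeta_K^\ast(j)$, which is precisely the factor removed from $\zeta_{K,\{p\}}^\ast$ by the subscript $\{p\}$.

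It remains to account for the archimedean factor coming from the period map. The matrix of (\ref{period}) in the $\ZZ$-basis $\bm{x}$ of $\cO_K$ and the fixed basis of $H_K(j)$ of \S \ref{labeling} has determinant of the shape $\pm 2^{a}\,(2\pi\sqrt{-1})^{b}\,\sqrt{D_K}$, where the integer exponents $a,b$ depend on the signature of $K$ and on $j$; the factors $2$ and $1/2$ in (\ref{decomp}) are arranged precisely (following \cite[p.\ 8]{katolecture2}) so that this period determinant combines cleanly with the archimedean Gamma factors $\Gamma_{\RR}(j)^{r_1}\Gamma_{\CC}(j)^{r_2}$ attached to $K$. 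Substituting the scalar from the previous paragraph and this period factor into the formula from the first paragraph, and then invoking the functional equation
$$\zeta_K^{\ast}(1-j) = \pm\, D_K^{\,j-1/2}\cdot \frac{\Gamma_{\RR}(j)^{r_1}\,\Gamma_{\CC}(j)^{r_2}}{\Gamma_{\RR}(1-j)^{r_1}\,\Gamma_{\CC}(1-j)^{r_2}}\cdot \zeta_K^{\ast}(j),$$
the $((j-1)!)^r$ and the Gamma and $2\pi\sqrt{-1}$ contributions cancel, the Euler-factor ratio from the reciprocity law matches the ratio hidden in the normalisation $\zeta^\ast_{K,\{p\}}$, and the discriminant exponents $-1/2$ (from the period determinant) and $j-1/2$ (from the functional equation) combine to a single net factor of $D_K^{-j}$ on the right-hand side, as claimed.

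The conceptual content is entirely contained in the explicit reciprocity law and the functional equation, so the main obstacle is the archimedean bookkeeping in the last step. Keeping careful track of all signs, of the integer powers of $2\pi\sqrt{-1}$ and $2$, of the $(j-1)!$ factors, and especially of the half-integer powers of $D_K$ arising from the period determinant $\det(\sigma_i(x_j)) = \pm \sqrt{D_K}$, is what ultimately produces the clean constant $\pm D_K^{-j}$ on the right-hand side; the careful normalisation of $2$ and $1/2$ in (\ref{decomp}) is what ensures that no spurious powers of $2$ remain, as already emphasised after (\ref{period}).
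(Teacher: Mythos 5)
Your proposal follows essentially the same route as the paper. You unwind $\Phi_j$ via Remark~\ref{remCW} as the wedge ${\rm Col}_j(x_1^\ast)\wedge\cdots\wedge{\rm Col}_j(x_r^\ast)$, apply the Bloch--Kato explicit reciprocity law to pass from ${\rm Col}_j$ to a concrete scalar operator on $K_p$, compute the archimedean period determinant, and then close by invoking the functional equation of $\zeta_K$. This is precisely the decomposition the paper uses: the functional equation is Lemma~\ref{fe0}, the period determinant is Lemma~\ref{compute}, and the reciprocity law computation is Lemma~\ref{exp rec} (with the small input Lemma~\ref{det frob}), after which Corollary~\ref{cor comp} and Lemma~\ref{exp rec} are combined.

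One piece of your bookkeeping in the last paragraph is, however, not quite right and would need repair. You write that ``the discriminant exponents $-1/2$ (from the period determinant) and $j-1/2$ (from the functional equation) combine to a single net factor of $D_K^{-j}$''. There are two problems here. First, the period matrix you describe has determinant $\pm 2^a (2\pi\sqrt{-1})^b\sqrt{D_K}$, which is a contribution of $D_K^{+1/2}$ to $\alpha_j(x_1\wedge\cdots\wedge x_r)$, not $D_K^{-1/2}$; and in any case $(-1/2)+(j-1/2) = j-1 \neq -j$, so the arithmetic as stated does not produce the claimed exponent. Second, and more substantively, you identify $K_v^\ast\simeq K_v$ via the trace form when invoking the reciprocity law but then do not carry the determinant of that identification into the final count. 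That identification contributes an extra integer factor of $D_K^{\pm 1}$ --- this is exactly the $D_K$ that appears in Lemma~\ref{exp rec} via $\det(\mathrm{Tr}_{K/\QQ}(x_i x_j)) = D_K$ --- and it is this integer power, together with the half-integer power $D_K^{1/2}$ from the period and the $D_K^{j-1/2}$ from the functional equation, that balances to give the net $D_K^{-j}$ (schematically: $(-1) + (1/2) = (-j) + (j - 1/2)$). Once you separate the integer $D_K$ coming from the trace pairing from the half-integer $\sqrt{D_K}$ coming from the archimedean period, the exponents work out and the rest of your outline is sound.
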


\begin{remark}\label{modify}
Since $D_K \in \ZZ_p^\times$, there exists $\sigma_{D_K} \in G_\infty$ such that $\chi_{\rm cyc}(\sigma_{D_K})=D_K$. Then Theorem \ref{colint} implies the element $\Psi:=\sigma_{D_K}\cdot \Phi_{\bm{x}}$ satisfies the  interpolation property
$$\vartheta_j^{\rm loc}(\chi_{\rm cyc}^j (\Psi))= \pm \frac{\zeta_{K, \{p\}}^\ast(1-j)}{\zeta_{K,\{p\}}^\ast(j)} \text{ for any $j>1$}.$$
This shows that the collection 
$\{\pm \zeta_{K,\{p\}}^\ast (1-j)/\zeta_{K,\{p\}}^\ast (j) \}_{j>1}$ is $p$-adically interpolated by $\Psi$ and so is considerably stronger than the statement of Theorem \ref{ltnc}. In particular, Theorem \ref{th1}(i) in the introduction will later be proved by applying Theorem \ref{colint}.
\end{remark}

The rest of this section is devoted to the proof of Theorem \ref{colint}. In the sequel we will write $r_\RR$ and $r_\CC$ for the number of real and complex places of $K$ respectively (so that $r = r_\RR + 2r_\CC$).

\begin{lemma}\label{fe0}
For any positive integer $j$, we have
$$\frac{\zeta_K^\ast(1-j)}{\zeta_K^\ast(j)} =\begin{cases}
 \displaystyle
\pm 2^{r_\RR}(2\pi)^{r_\CC}\left( \frac{(j-1)!}{(2\pi)^j}\right)^r  |D_K|^{j-\frac 12} &\text{if $j$ is even,}\\
 \displaystyle \pm 2^{-r_\RR} (2\pi)^{r_\RR+r_\CC} \left( \frac{(j-1)!}{(2\pi)^j}\right)^r| D_K|^{j-\frac 12}&\text{if $j$ is odd.}
  \end{cases}$$
\end{lemma}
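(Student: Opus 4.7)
The plan is to derive the identity directly from the functional equation of $\zeta_K(s)$ by computing leading Laurent coefficients of the archimedean $\Gamma$-factors at $s = 1-j$. Recall the completed zeta function
\[
\Lambda_K(s) := |D_K|^{s/2}\,\Gamma_\RR(s)^{r_\RR}\,\Gamma_\CC(s)^{r_\CC}\,\zeta_K(s),
\]
with $\Gamma_\RR(s)=\pi^{-s/2}\Gamma(s/2)$ and $\Gamma_\CC(s)=2(2\pi)^{-s}\Gamma(s)$, and the functional equation $\Lambda_K(s)=\Lambda_K(1-s)$. Rearranging gives
\[
\zeta_K(s) \;=\; |D_K|^{1/2-s}\,
\frac{\Gamma_\RR(1-s)^{r_\RR}\,\Gamma_\CC(1-s)^{r_\CC}}{\Gamma_\RR(s)^{r_\RR}\,\Gamma_\CC(s)^{r_\CC}}\,\zeta_K(1-s),
\]
so that setting $s=1-j+\epsilon$ and reading off the leading term in $\epsilon$ reduces the ratio $\zeta_K^\ast(1-j)/\zeta_K^\ast(j)$ to the value $|D_K|^{j-1/2}\cdot\Gamma_\RR(j)^{r_\RR}\Gamma_\CC(j)^{r_\CC}$ divided by the leading Laurent coefficient of $\Gamma_\RR(s)^{r_\RR}\Gamma_\CC(s)^{r_\CC}$ at $s=1-j$ (the factor $\zeta_K(j)$ is nonzero for $j>1$; the case $j=1$ is deferred to the analytic class number formula).

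The case split in the statement arises precisely from the pole pattern of the $\Gamma$-factors at $s=1-j$. Since $\Gamma(s)$ has simple poles at non-positive integers with residue $(-1)^n/n!$ at $s=-n$, one sees that $\Gamma_\CC(1-j)$ has a simple pole for every $j\ge 1$, whereas $\Gamma_\RR(1-j)=\pi^{(j-1)/2}\Gamma((1-j)/2)$ is regular for $j$ even and has a simple pole for $j$ odd. Thus the leading coefficient of $\Gamma_\RR^{r_\RR}\Gamma_\CC^{r_\CC}$ at $s=1-j$ is of pole order $r_\CC$ (resp.\ $r_\RR+r_\CC$) when $j$ is even (resp.\ odd) and equals the product of the corresponding residues to the appropriate powers. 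A short computation using the standard residue of $\Gamma$ yields
\[
\mathrm{Res}_{s=1-j}\Gamma_\CC(s) = \frac{2(2\pi)^{j-1}(-1)^{j-1}}{(j-1)!},\qquad
\mathrm{Res}_{s=1-j}\Gamma_\RR(s) = \frac{2\pi^{(j-1)/2}(-1)^{(j-1)/2}}{((j-1)/2)!}\text{ for odd }j.
\]

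The numerator factors $\Gamma_\RR(j)^{r_\RR}\Gamma_\CC(j)^{r_\CC}$ are evaluated using $\Gamma(j)=(j-1)!$ together with $\Gamma(n+1/2)=(2n)!\sqrt{\pi}/(4^n n!)$ in the odd case (equivalently the Legendre duplication formula, which gives the identity $\Gamma_\RR(s)\Gamma_\RR(s+1)=\Gamma_\CC(s)$). Putting these ingredients together, one collects powers of $2$, $\pi$, factorials and signs; the numerical factorials collapse via the identity $r=r_\RR+2r_\CC$, which matches exponents of $(j-1)!$ to give $((j-1)!)^r$, and a direct bookkeeping of $\pi$-exponents yields the predicted power $(2\pi)^{r_\CC-jr}$ in the even case and $(2\pi)^{r_\RR+r_\CC-jr}$ in the odd case. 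In the odd case there is a cancellation between the factor $((j-1)/2)!$ appearing in $\Gamma_\RR(j)$ and the one in the residue of $\Gamma_\RR$ at $1-j$, which is what makes the final answer independent of this auxiliary factorial.

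The main obstacle is simply careful bookkeeping of the $2$-power and $\pi$-power exponents, and of the sign arising from $(-1)^{(j-1)/2}$ in the odd case and $(-1)^{j-1}$ in the $\Gamma_\CC$ residue — the latter being absorbed into the ambiguous $\pm$ of the statement. No deeper input is needed beyond the functional equation, the residue formula for $\Gamma$, and the duplication/half-integer values of $\Gamma$.
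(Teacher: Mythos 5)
Your argument is correct, and it is in essence the same approach as the paper's: reading off the Laurent leading coefficients from the functional equation of $\zeta_K(s)$. The only difference is cosmetic: the paper substitutes directly into the \emph{asymmetric} form
$$\zeta_K(1-s)= |D_K|^{s-\frac 12} \Bigl( \cos \tfrac{\pi s}{2}\Bigr)^{r_\RR+r_\CC} \Bigl(\sin \tfrac{\pi s}{2}\Bigr)^{r_\CC } \bigl( 2 (2\pi)^{-s}\Gamma(s)\bigr)^r \zeta_K(s),$$
whereas you work with the symmetric completed form $\Lambda_K(s)=\Lambda_K(1-s)$ involving $\Gamma_\RR$, $\Gamma_\CC$ and then compute the pole structure of the archimedean factors at $s=1-j$. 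The two are equivalent by the reflection and Legendre duplication formulae, and your stated residues for $\Gamma_\CC$ and $\Gamma_\RR$ at $s=1-j$ are correct. The asymmetric form is marginally quicker because the orders of vanishing and the relevant leading terms are visible at once from the $\cos$/$\sin$ factors, without needing half-integer values of $\Gamma$ or its residues; but the bookkeeping in your version, if carried out, gives the same result. One small inaccuracy worth flagging: the case $j=1$ does not actually need the analytic class number formula for \emph{this} lemma --- the ratio $\zeta_K^*(0)/\zeta_K^*(1)$ is determined by the functional equation alone, since the simple pole of $\zeta_K$ at $s=1$ is exactly compensated by the extra order of vanishing of the $\Gamma$-factors (or of $\cos(\pi s/2)^{r_\RR+r_\CC}$) at $s=0$. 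The paper's deferral to the class number formula occurs elsewhere in the proof of Theorem \ref{ltnc}, not in the proof of Lemma \ref{fe0}.
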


\begin{proof}
This follows from the well-known functional equation
$$\zeta_K(1-s)= |D_K|^{s-\frac 12} \left( \cos \frac{\pi s}{2}\right)^{r_\RR+r_\CC} \left(\sin \frac{\pi s}{2}\right)^{r_\CC } \left( 2 (2\pi)^{-s}\Gamma(s)\right)^r \zeta_K(s).$$
\end{proof}

Recall that we fixed a $\ZZ$-basis $\{x_1,\ldots,x_r\}$ of $\cO_K$.

\begin{lemma}\label{compute}
Let
$$\alpha_j: \RR {\bigwedge}_\QQ^r K \simeq \RR {\bigwedge}_\QQ^r H_K(j)$$
be the isomorphism induced by (\ref{period}). We identify $\ {\bigwedge}_\QQ^r H_K(j)=\QQ$ (see \S \ref{labeling}). Then we have
$$\alpha_j(x_1\wedge \cdots \wedge x_r)= \begin{cases}
\displaystyle \pm 2^{r_\RR}(2\pi)^{r_\CC} \frac{\sqrt{|D_K|}}{(2\pi)^{jr}}  &\text{if $j$ is even},\\
\displaystyle \pm 2^{-r_\RR}(2\pi)^{r_\RR+r_\CC} \frac{\sqrt{|D_K|}}{(2\pi)^{jr}} &\text{if $j$ is odd}.
\end{cases}$$

\end{lemma}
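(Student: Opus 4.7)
The plan is to compute $\alpha_j(x_1\wedge\cdots\wedge x_r)$ explicitly by tracking the composite
\[\phi: \RR K \xrightarrow{(\ref{jj})} \RR H_K(j)^+ \oplus \RR H_K(j-1)^+ \xrightarrow{(\ref{decomp})} \RR H_K(j) = \bigoplus_{\sigma: K \hookrightarrow \CC}(2\pi\sqrt{-1})^j\RR\]
on each $x_k$, reading off the resulting $r \times r$ matrix $M = (M_{ik})$ in the chosen bases $\{x_k\}$ and $\{(2\pi\sqrt{-1})^j e_{\sigma_i}\}$, and then computing its determinant. Under the identification $\det_\QQ H_K(j) = \QQ$ fixed in \S\ref{labeling}, one has $\alpha_j(x_1\wedge\cdots\wedge x_r) = \det M$.

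Embedding $\RR K \hookrightarrow \prod_\sigma \CC$ via $x \mapsto (\sigma_i(x))_i$, the subspaces $\RR(j)$ and $\RR(j-1)$ of $\CC$ are the $(-1)^j$- and $(-1)^{j-1}$-eigenspaces of complex conjugation, so the projections of $z \in \CC$ onto them are $\tfrac{1}{2}(z \pm (-1)^j\bar z)$. Substituting into (\ref{decomp}) shows that the $\sigma_i$-component of $\phi(x)$ equals
\[\bigl(1 + \tfrac{\pi\sqrt{-1}}{2}\bigr)\sigma_i(x) + (-1)^j\bigl(1 - \tfrac{\pi\sqrt{-1}}{2}\bigr)\overline{\sigma_i(x)} \;\in\; (2\pi\sqrt{-1})^j\RR,\]
and dividing by $(2\pi\sqrt{-1})^j$ gives $M_{ik}$ (when $x = x_k$). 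Hence $\det M = (2\pi\sqrt{-1})^{-jr}\det(A\Sigma)$, where $\Sigma = (\sigma_i(x_k))_{i,k}$ is the Minkowski embedding matrix and $A$ is the $r \times r$ matrix encoding the linear combination above, i.e.\ $A_{ii'}$ is nonzero only when $i' = i$ or $i' = \bar i$ (with $\bar i$ denoting the index of $\overline{\sigma_i}$).

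After reordering the embeddings so that each complex-conjugate pair is adjacent, $A$ becomes block-diagonal with a $1\times 1$ block $(1 + \tfrac{\pi\sqrt{-1}}{2}) + (-1)^j(1 - \tfrac{\pi\sqrt{-1}}{2})$ (equal to $2$ if $j$ is even and to $\pi\sqrt{-1}$ if $j$ is odd) for each real embedding, and a $2\times 2$ block for each complex pair whose determinant is $(1+\tfrac{\pi\sqrt{-1}}{2})^2 - (1-\tfrac{\pi\sqrt{-1}}{2})^2 = 2\pi\sqrt{-1}$. This gives
\[\det A = \begin{cases} 2^{r_\RR}(2\pi\sqrt{-1})^{r_\CC}, & j\text{ even,}\\ (\pi\sqrt{-1})^{r_\RR}(2\pi\sqrt{-1})^{r_\CC}, & j\text{ odd.}\end{cases}\]
Combining this with the identity $(\det\Sigma)^2 = D_K$ (so $|\det\Sigma| = \sqrt{|D_K|}$) and collecting the powers of $2\pi\sqrt{-1}$ yields the asserted formula up to sign; in the odd case one uses the rewriting $\pi^{r_\RR} = 2^{-r_\RR}(2\pi)^{r_\RR}$ to produce the prefactor $2^{-r_\RR}(2\pi)^{r_\RR + r_\CC}$. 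The only real obstacle is bookkeeping of these factors and the sign contribution arising from $D_K = (-1)^{r_\CC}|D_K|$; since the statement asserts equality only up to sign, any residual sign ambiguity is absorbed into the $\pm$.
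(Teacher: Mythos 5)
Your computation is correct and fills in precisely the ``simple computation'' the paper elides: the composite $\phi$ is tracked component-wise through the eigenspace projections of $\CC = \RR(j)\oplus\RR(j-1)$, the resulting matrix factors as $(2\pi\sqrt{-1})^{-j}A\Sigma$, the determinant of $A$ is evaluated by block-diagonalization according to real vs.\ complex-conjugate pairs, and $|\det\Sigma| = \sqrt{|D_K|}$ then yields the stated value up to sign. This is exactly the approach implicit in the paper, which gives no details.
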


\begin{proof}
This follows from a simple computation.
\end{proof}

By combining Lemmas \ref{fe0} and \ref{compute}, we have the following.

\begin{corollary}\label{cor comp}
We have
$$\alpha_j \left( \left(\prod_{v\in S_p(K)} \frac{1-{\N}v^{j-1}}{1-{\N}v^{-j}} \right) (j-1)!^r D_K^{j-1} \cdot x_1\wedge \cdots \wedge x_r \right) = \pm \frac{\zeta_{K, \{p\}}^\ast(1-j)}{\zeta_{K, \{p\}}^\ast(j)} .$$
\end{corollary}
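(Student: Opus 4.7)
The proof is essentially a direct comparison of the two preceding lemmas, combined with the definition of the $p$-truncated zeta function. Here is the plan in detail.

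First, I would use the $\QQ$-linearity of $\alpha_j$ to pull the scalar $(j-1)!^r \, D_K^{j-1} \prod_{v\in S_p(K)} \tfrac{1-{\N}v^{j-1}}{1-{\N}v^{-j}}$ out of the argument, so that it suffices to evaluate $\alpha_j(x_1 \wedge \cdots \wedge x_r)$ and then verify an equality of scalars in $\CC_p$. Applying Lemma \ref{compute} reduces the left-hand side, in both parity cases, to an expression of the form
\[
\pm\, (j-1)!^r \, D_K^{j-1} \cdot \sqrt{|D_K|} \cdot (2\pi)^{-jr} \cdot C_j \cdot \prod_{v\in S_p(K)} \frac{1-{\N}v^{j-1}}{1-{\N}v^{-j}},
\]
where $C_j$ is $2^{r_\RR}(2\pi)^{r_\CC}$ or $2^{-r_\RR}(2\pi)^{r_\RR+r_\CC}$ depending on the parity of $j$.

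Next, I would match this against the right-hand side. Since $\zeta_{K,\{p\}}(s) = \zeta_K(s) \prod_{v \in S_p(K)}(1-{\N}v^{-s})$, taking leading terms at $s=j$ and $s=1-j$ (at which $\zeta_K$ is nonzero and holomorphic, so leading term equals value) gives
\[
\frac{\zeta_{K,\{p\}}^\ast(1-j)}{\zeta_{K,\{p\}}^\ast(j)} \;=\; \frac{\zeta_K^\ast(1-j)}{\zeta_K^\ast(j)} \cdot \prod_{v \in S_p(K)} \frac{1-{\N}v^{j-1}}{1-{\N}v^{-j}}.
\]
The Euler product corrections now cancel with the ones already on the left, so the claim reduces to
\[
(j-1)!^r \, D_K^{j-1} \cdot \alpha_j(x_1 \wedge \cdots \wedge x_r) \;=\; \pm\, \frac{\zeta_K^\ast(1-j)}{\zeta_K^\ast(j)}.
\]

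Finally, I would check this equality by direct comparison with Lemma \ref{fe0}. The factor $\bigl((j-1)!/(2\pi)^j\bigr)^r$ in Lemma \ref{fe0} matches $(j-1)!^r/(2\pi)^{jr}$ from Lemma \ref{compute}, and the parity-dependent prefactor $C_j$ matches in both cases on the nose. The only non-cosmetic check is the power of the discriminant: one needs $D_K^{j-1} \cdot \sqrt{|D_K|} = \pm |D_K|^{j-\tfrac{1}{2}}$, which is immediate from $D_K^2 = |D_K|^2$ (equivalently, $D_K = \pm |D_K|$), the sign being determined by the signature of $K$ but absorbed into the overall $\pm$. I expect no real obstacle; the only point demanding minor care is the bookkeeping of signs and the $D_K$-versus-$|D_K|$ distinction, which the $\pm$ in the statement makes harmless.
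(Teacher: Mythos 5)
Your proof is correct and takes exactly the approach the paper intends (the paper simply says ``By combining Lemmas \ref{fe0} and \ref{compute}'' and gives no further detail). One small inaccuracy in a parenthetical: $\zeta_K$ need not be nonzero at $s=1-j$ (it typically has a zero there of order $r_j$); what you actually use is that the Euler factors $\prod_{v\in S_p(K)}(1-\N v^{-s})$ are holomorphic and nonzero at both $s=j$ and $s=1-j$, which is what justifies $\zeta_{K,\{p\}}^\ast(1-j)/\zeta_{K,\{p\}}^\ast(j) = \bigl(\zeta_K^\ast(1-j)/\zeta_K^\ast(j)\bigr)\prod_v (1-\N v^{j-1})/(1-\N v^{-j})$.
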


Theorem \ref{colint} follows from Corollary \ref{cor comp} and the following key lemma, which is essentially the explicit reciprocity law due to Bloch and Kato \cite[Th. 2.1]{BK}.

\begin{lemma}\label{exp rec}
Let
$$\log_{\QQ_p(j)}:{\det}^{-1}_{\QQ_p}(\rgamma(K_p,\QQ_p(j))) \stackrel{(\ref{qp})}{\simeq} {\bigwedge}_{\QQ_p}^r H^1(K_p, \QQ_p(j)) \simeq {\bigwedge}_{\QQ_p}^r K_p $$
be the isomorphism induced by the Bloch-Kato logarithm map. Then we have
$$\log_{\QQ_p(j)} (\Phi_j) =   \left(\prod_{v\in S_p(K)} \frac{1-{\N}v^{j-1}}{1-{\N}v^{-j}} \right) (j-1)!^r D_K^{-1} \cdot x_1\wedge \cdots \wedge x_r. $$

\end{lemma}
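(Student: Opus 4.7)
The plan is to reduce the claim to Bloch and Kato's classical explicit reciprocity law (\cite[Th.~2.1]{BK}) by working place-by-place over the primes above $p$, and then to account for the discriminant through the trace pairing.

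First, I would invoke the explicit description of $\Phi_j$ supplied by Remark \ref{remCW}: $\Phi_j$ is the image of $x_1^\ast\wedge\cdots\wedge x_r^\ast$ under the composition
\[
{\bigwedge}_{\QQ_p}^r K_p^\ast \xrightarrow{\operatorname{Col}_j} {\bigwedge}_{\QQ_p}^r H^1(K_p, \QQ_p(j)) \simeq {\det}_{\QQ_p}^{-1}(\rgamma(K_p, \QQ_p(j))),
\]
where $\operatorname{Col}_j$ is obtained by applying $\Hom_\Lambda(-,\QQ_p(j))$ to the Coleman map, identifying $\Hom_{\QQ_p}(K_p, \QQ_p(j))$ with $K_p^\ast$ via the canonical basis $\xi^{\otimes j}$ of $\QQ_p(j)$ and with $\Hom_{\Lambda}(\mathcal{O}_{K_p}[[G_\infty]], \QQ_p(j))$ via $f\mapsto f\circ\chi_{\rm cyc}^j$. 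This reduces the statement to computing $\log_{\QQ_p(j)}\circ \operatorname{Col}_j$ on the dual basis.

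The heart of the proof is the place-by-place identity
\[
\log_{\QQ_p(j)}\bigl(\operatorname{Col}_{j,v}(f)\bigr) \;=\; (j-1)!\cdot \frac{1-\N v^{j-1}}{1-\N v^{-j}}\cdot t_v(f)\qquad (v\in S_p(K),\; f\in K_v^\ast),
\]
where $t_v\colon K_v^\ast\xrightarrow{\sim} K_v$ is the inverse of the $\QQ_p$-linear isomorphism induced by the trace pairing $K_v\times K_v\to\QQ_p$. This is the content of Bloch–Kato's explicit reciprocity law in the unramified setting: since $K_v/\QQ_p$ is unramified, Lubin–Tate theory for $K_v$ coincides with cyclotomic theory over $\QQ_p$ after extending scalars, so the Coates–Wiles derivation $(1+T)d/dT$ at $T=0$ produces the factor $(j-1)!$, while the twist by $\chi_{\rm cyc}^j$ together with the Frobenius trivialization on $K_v\otimes_{\QQ_p}\QQ_p^{ur}$ yields the Euler factor $(1-\N v^{j-1})/(1-\N v^{-j})$; these are exactly the constants extracted in \cite[\S2 and Th.~2.1]{BK} for $K=\QQ_p$, upgraded to $K_v$ by a Shapiro-type descent along the unramified extension $K_v/\QQ_p$.

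Taking the $r$-th exterior product of the displayed identity and evaluating at $x_1^\ast\wedge\cdots\wedge x_r^\ast$ yields
\[
\log_{\QQ_p(j)}(\Phi_j)\;=\;(j-1)!^r\prod_{v\in S_p(K)}\!\frac{1-\N v^{j-1}}{1-\N v^{-j}}\;\cdot\; t(x_1^\ast)\wedge\cdots\wedge t(x_r^\ast),
\]
where $t\colon K_p^\ast\xrightarrow{\sim}K_p$ assembles the $t_v$. Since $\{x_1,\ldots,x_r\}$ is a $\ZZ$-basis of $\mathcal{O}_K$, the Gram matrix $\bigl(\operatorname{Tr}_{K/\QQ}(x_ix_j)\bigr)$ has determinant $D_K$, hence the change-of-basis identity $t(x_1^\ast)\wedge\cdots\wedge t(x_r^\ast)=D_K^{-1}\cdot x_1\wedge\cdots\wedge x_r$ holds in ${\bigwedge}_{\QQ_p}^r K_p$, giving the desired formula.

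The main obstacle is pinning down the precise constants in the place-by-place reciprocity identity, since the formulation in \cite{BK} is given for $K=\QQ_p$ and uses a slightly different parametrization; in particular one must verify that, under the identifications $\Hom_\Lambda(\mathcal{U}_\infty,\QQ_p(j))=H^1(K_p,\QQ_p(j))$ of \cite[p.~342]{BK} and $\QQ_p(j)\cong\QQ_p$ via $\xi^{\otimes j}$, the normalization used here to define $\operatorname{Col}_{j,v}$ matches the one for which Bloch–Kato's reciprocity takes the stated form. Once this bookkeeping is settled, all remaining steps are either the explicit reciprocity law itself or linear algebra with the trace form.
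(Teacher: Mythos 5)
Your strategy — invoke Remark \ref{remCW} to write $\Phi_j$ as $\operatorname{Col}_j(x_1^\ast\wedge\cdots\wedge x_r^\ast)$, feed the Bloch--Kato explicit reciprocity law into the composite with $\log_{\QQ_p(j)}$, and convert the dual basis back with the trace form — is essentially the paper's, and your final wedge computation $t(x_1^\ast)\wedge\cdots\wedge t(x_r^\ast)=D_K^{-1}\,x_1\wedge\cdots\wedge x_r$ is correct. However, the displayed place-by-place identity
\[
\log_{\QQ_p(j)}\bigl(\operatorname{Col}_{j,v}(f)\bigr)\;=\;(j-1)!\cdot\frac{1-\N v^{j-1}}{1-\N v^{-j}}\cdot t_v(f)\qquad (f\in K_v^\ast)
\]
is false as an identity of $\QQ_p$-linear maps whenever $K_v\neq\QQ_p$. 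What Bloch--Kato's reciprocity actually gives (and what the paper records from \cite[Claim 4.8]{BK}) is that the inverse composite $\Phi_j^\ast\circ\exp_{\QQ_p(j)}\colon K_p\to K_p^\ast$ is
\[
x\;\longmapsto\;\Bigl(y\mapsto (j-1)!^{-1}\,\mathrm{Tr}_{K/\QQ}\bigl((1-p^{-j}\mathrm{Fr}_p)(x)\cdot(1-p^{j-1}\mathrm{Fr}_p)^{-1}(y)\bigr)\Bigr),
\]
which is the trace pairing twisted by a genuine Frobenius \emph{operator}, not a scalar. Your ratio $(1-\N v^{j-1})/(1-\N v^{-j})$ is the \emph{determinant} of that operator on $K_v$, not its scalar of proportionality; an operator like $(1-p^{j-1}\mathrm{Fr}_v)^{-1}$ acts by different scalars on different eigenvectors and so cannot be absorbed into a constant multiple of $t_v$. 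Your ``Shapiro-type descent along $K_v/\QQ_p$'' does not repair this — it is precisely the passage from a scalar over $\QQ_p$ to a nonscalar operator over $K_v$.

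The fix, and the route the paper takes, is to work at the level of determinants rather than linear maps. One records the linear map with its Frobenius twist, then computes its determinant on the bases $\{x_i\}$ and $\{x_i^\ast\}$: the Gram determinant contributes $D_K$, the derivation contributes $(j-1)!^{-r}$, and the Frobenius twist contributes $\prod_v (1-\N v^{-j})/(1-\N v^{j-1})$ via the elementary Lemma \ref{det frob}, i.e. $\det(1-\sigma t\mid E)=1-t^{f}$ for a cyclic unramified $E/\QQ_p$ of degree $f$. Passing to the top exterior power turns the whole composite into multiplication by this determinant, and inverting gives the stated formula. So your proposal is salvageable by replacing the (false) local linear identity with the (true) local determinant identity $\det\bigl((1-p^{-j}\mathrm{Fr}_v)\mid K_v\bigr)=1-\N v^{-j}$ and its analogue for $1-p^{j-1}\mathrm{Fr}_v$; everything else you wrote then goes through.
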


\begin{proof}
Let $\exp_{\QQ_p(j)}$ denote the inverse of $\log_{\QQ_p(j)}$.
Let
$$\Phi_j^\ast: {\bigwedge}_{\QQ_p}^r H^1(K_p, \QQ_p(j)) \xrightarrow{\sim} {\bigwedge}_{\QQ_p}^r K_p^\ast$$
be the isomorphism defined by $\Phi_j \mapsto x_1^\ast \wedge \cdots \wedge x_r^\ast$. By Remark \ref{remCW}, this map coincides with the map induced by `$c\circ b \circ a$' in \cite[p. 367]{BK} ($r$ in loc. cit. corresponds to our $j$). So, by \cite[Claim 4.8]{BK}, the composition
$${\bigwedge}_{\QQ_p}^r K_p \xrightarrow{\exp_{\QQ_p(j)}} {\bigwedge}_{\QQ_p}^r H^1(K_p, \QQ_p(j)) \xrightarrow{\Phi_j^\ast} {\bigwedge}_{\QQ_p}^r K_p^\ast$$
coincides with the map induced by
$$K_p \to K_p^\ast; \ x \mapsto \left( y \mapsto (j-1)!^{-1} {\rm Tr}_{K/\QQ} ((1-p^{-j}{\rm Fr}_p)(x) \cdot (1-p^{j-1}{\rm Fr}_p)^{-1} (y))\right).$$
Here ${\rm Fr}_p:=({\rm Fr}_v)_{v \in S_p(K)}$ is the automorphism of $K_p=\prod_{v\in S_p(K)}K_v$ determined by the Frobenius element ${\rm Fr}_v \in \Gal(K_v/\QQ_p)$ for each $v \in S_p(K)$.
By Lemma \ref{det frob} below, the determinant of this map (with respect to the bases $\{x_1,\ldots,x_r\}$ and $\{x_1^\ast,\ldots,x_r^\ast\}$) is
$$ (j-1)!^{-r} D_K \prod_{v \in S_p(K)} \frac{1- {\N}v^{-j}}{1-{\N}v^{j-1}}. $$
Thus we have
$$\exp_{\QQ_p(j)} (x_1\wedge\cdots \wedge x_r)=\left( (j-1)!^{-r} D_K \prod_{v \in S_p(K)} \frac{1- {\N}v^{-j}}{1-{\N}v^{j-1}}\right)\cdot \Phi_j. $$
This proves the lemma.
\end{proof}

\begin{lemma}\label{det frob}
Let $E/\QQ_p$ be a finite cyclic extension of degree $f$. Then for any generator $\sigma \in \Gal(E/\QQ_p)$ we have an equality of polynomials
$$\det(1-\sigma \cdot  t \mid E) = 1-t^f.$$
($E$ is regarded as a $\QQ_p$-vector space.)
\end{lemma}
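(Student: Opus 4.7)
The plan is to reduce the computation to a normal-basis calculation, exploiting the fact that $E$ is a cyclic Galois extension. Concretely, the normal basis theorem for the cyclic extension $E/\QQ_p$ furnishes an element $\alpha \in E$ such that $\{\alpha, \sigma\alpha, \sigma^2\alpha, \ldots, \sigma^{f-1}\alpha\}$ is a $\QQ_p$-basis of $E$. Equivalently, the natural map $\QQ_p[\Gal(E/\QQ_p)] \to E$ sending $\tau \mapsto \tau(\alpha)$ is an isomorphism of $\QQ_p[\Gal(E/\QQ_p)]$-modules, so $E$ is free of rank one over the group algebra.

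With respect to this basis, the $\QQ_p$-linear action of $\sigma$ is the cyclic shift that permutes the basis vectors $\sigma^i \alpha$ by $i \mapsto i+1 \pmod{f}$. The matrix of $\sigma$ is therefore the companion matrix $C$ of the polynomial $X^f - 1$, and the characteristic polynomial of $C$ is visibly $\det(XI - C) = X^f - 1$. A direct substitution then gives
\[
\det(I - t\sigma \mid E) \,=\, \det(I - tC) \,=\, t^f \det(t^{-1} I - C) \,=\, t^f\bigl(t^{-f} - 1\bigr) \,=\, 1 - t^f,
\]
which is the claim.

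No serious obstacle is expected: the only non-formal input is the normal basis theorem, which is classical for any finite Galois extension of fields (and in particular for the local field extension $E/\QQ_p$). One could alternatively phrase the argument entirely algebraically by observing that $E \cong \QQ_p[T]/(T^f-1)$ as modules over $\QQ_p[T]$ (with $T$ acting as $\sigma$), and then computing the determinant of multiplication by $1 - tT$ on the ring $\QQ_p[T]/(T^f-1)$; but both formulations amount to the same linear-algebra calculation.
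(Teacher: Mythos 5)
Your proof is correct and follows essentially the same route as the paper's: both invoke the normal basis theorem to realize $\sigma$ as the cyclic permutation (companion) matrix of $X^f-1$, and then read off $\det(1-\sigma t)=1-t^f$. The only cosmetic difference is that the paper writes out the matrix and computes the determinant directly, while you appeal to the known characteristic polynomial of a companion matrix.
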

\begin{proof}
The matrix of $\sigma$ with respect to a normal basis of $E/\QQ_p$ is
$$\begin{pmatrix}
0 & 0& \cdots  &\cdots & 1 \\
1 & 0 &\cdots & \cdots &0 \\
0 & 1 & 0 & \cdots &0 \\
\vdots && \ddots&\ddots&\vdots \\
0 & \cdots& \cdots& 1 & 0
\end{pmatrix}.$$
Thus we have
$$\det(1-\sigma \cdot  t \mid E)=\det\begin{pmatrix}
1 & 0& \cdots  &\cdots & -t \\
-t & 1 &\cdots & \cdots &0 \\
0 & -t & 1 & \cdots &0 \\
\vdots && \ddots&\ddots&\vdots \\
0 & \cdots& \cdots& -t & 1
\end{pmatrix} = 1-t^f.$$
\end{proof}

\section{Generalized Stark elements and Tamagawa numbers}\label{gsetn}


In this section, we give a review of generalized Stark elements introduced in \cite{bks2-2} (see Definition \ref{gse def}). We also formulate the Tamagawa number conjecture for number fields (see Conjecture \ref{tnc conj}) and, by applying Theorem \ref{ltnc}, we give some new evidence for the conjecture (see Theorem \ref{tnc evidence}).

Let $K$ be a number field and $p$ an odd prime number. We set
$$S:=S_\infty(K) \cup S_p(K).$$

\subsection{Period-regulator isomorphisms}\label{period section}

For any integer $j$, one can define a canonical `period-regulator isomorphism'
$$\vartheta_{\ZZ_p(j)}: \CC_p{\det}_{\ZZ_p}^{-1}(\rgamma_c(\cO_{K,S},\ZZ_p(j)))  \xrightarrow{\sim} \CC_p.$$
(More generally, for a general $p$-adic representation $T$ coming from geometry, and any finite set $\Sigma$ of places of $K$ that contains $S_\infty(K) \cup S_p(K)$ and all places at which $T$ ramify, one can define a canonical isomorphism $\vartheta_{T}: \CC_p{\det}_{\ZZ_p}^{-1}(\rgamma_c(\cO_{K,\Sigma},T))  \xrightarrow{\sim} \CC_p$.)

For the reader's convenience we now review the explicit definition of $\vartheta_{\ZZ_p(j)}$ in the case that $j\neq 0,1$. To do this we set
%
%
%
$$r_j:={\rm rank}_{\ZZ_p}(Y_K(j))={\rm rank}_\QQ (H_K(j)^+) =\begin{cases}
r_\RR+r_\CC &\text{if $j$ is even},\\
r_\CC&\text{if $j$ is odd}.
\end{cases}$$
For each $j > 1$ we also write
\begin{equation}\label{borel reg iso} {\rm reg}_j: \RR K_{2j-1}(\cO_K) \simeq \RR H_K(j-1)^+\end{equation}
for the canonical Borel regulator isomorphism. It is known that the Chern character map induces an isomorphism
\begin{equation}\label{chern iso}{\rm ch}_j: \QQ_p K_{2j-1}(\cO_K) \simeq H^1(\cO_{K,S},\QQ_p(j))\end{equation}
(cf. \cite[\S 2.2.1]{bks2-2}).

\subsubsection{The case $j<0$}\label{neg pr map section}

In this case the space $H^2(\cO_{K,S},\QQ_p(1-j))$ vanishes (by Soul\'e \cite[Th. 10.3.27]{NeuSW}). We can therefore define $\vartheta_{\ZZ_p(j)}$ to be the composition

\begin{eqnarray*}
\CC_p{\det}_{\ZZ_p}^{-1}(\rgamma_c(\cO_{K,S},\ZZ_p(j))) &\stackrel{(\ref{av isom})}{\simeq}& \CC_p\left({\det}_{\ZZ_p}^{-1}(\rgamma(\cO_{K,S},\ZZ_p(1-j))) \otimes_{\ZZ_p} {\det}_{\ZZ_p}^{-1}(Y_K(-j)) \right) \\
&\stackrel{(\ref{qp})}{\simeq} &\CC_p \left({\bigwedge}_{\ZZ_p}^{r_j} H^1(\cO_{K,S},\ZZ_p(1-j)) \otimes_{\ZZ_p} {\bigwedge}_{\ZZ_p}^{r_j} Y_K(-j)^\ast \right) \\
&\stackrel{{\rm ch}_{1-j}}{\simeq}& \CC_p\left( {\bigwedge}_\ZZ^{r_j} K_{1-2j}(\cO_K) \otimes_{\ZZ}  {\bigwedge}_{\ZZ_p}^{r_j} Y_K(-j)^\ast\right) \\
&\stackrel{{\rm reg}_{1-j}}{\simeq}&\CC_p \left( {\bigwedge}_{\QQ}^{r_j}H_K(-j)^+ \otimes_{\ZZ}  {\bigwedge}_{\ZZ_p}^{r_j} Y_K(-j)^\ast  \right)\\
&\simeq &\CC_p,
\end{eqnarray*}
where the last isomorphism uses the canonical identification $\QQ_p H_K(-j)^+ = \QQ_p Y_K(-j)$ (see (\ref{betti})).

\subsubsection{The case $j>1$}
In this case we define $\vartheta_{\ZZ_p(j)}$ to be the composition

\begin{eqnarray*}
&&\CC_p {\det}_{\ZZ_p}^{-1}(\rgamma_c(\cO_{K,S}, \ZZ_p(j))) \\
&\simeq& \CC_p \left({\det}_{\QQ_p}^{-1}(\rgamma_f(K,\QQ_p(j))) \otimes_{\QQ_p} {\det}_{\QQ_p}(\rgamma_f(K_p,\QQ_p(j)))   \otimes_{\ZZ_p} {\rm det}_{\ZZ_p}(Y_K(j)) \right)\\
&\simeq& \CC_p \left({\det}_\ZZ(K_{2j-1}(\cO_K))  \otimes_{\ZZ} {\det}^{-1}_{\QQ_p}(H^1(K_p,\QQ_p(j)))  \otimes_{\ZZ_p} {\rm det}_{\ZZ_p}(Y_K(j))\right) \\
&\stackrel{{\rm reg}_j \otimes \log_{\QQ_p(j)}}{\simeq}&  \CC_p \left({\det}_\QQ(H_K(j-1)^+) \otimes_\QQ {\det}^{-1}_\QQ(K)  \otimes_{\ZZ} {\rm det}_{\ZZ_p}(Y_K(j))\right) \\
&\stackrel{(\ref{jj})}{\simeq}& \CC_p\left({\det}_\QQ(H_K(j-1)^+) \otimes_\QQ {\det}_\QQ^{-1}(H_K(j-1)^+) \right) \\
&\simeq &\CC_p,
\end{eqnarray*}
where the first isomorphism is obtained from the exact triangle
$$\rgamma_c(\cO_{K,S},\QQ_p(j)) \to \rgamma_f(K,\QQ_p(j)) \to \rgamma_f(K_p, \QQ_p(j)) \oplus \rgamma (K \otimes_\QQ \RR, \QQ_p(j)),$$
and for the second isomorphism we used the fact that $H^1_f(K,\QQ_p(j)) \simeq H^1(\cO_{K,S},\QQ_p(j)) \stackrel{{\rm ch}_j}{\simeq} \QQ_p K_{2j-1}(\cO_K)$.

\begin{remark}\label{sch rem} It is conjectured by Schneider in \cite{ps} that the space $H^2(\cO_{K,S},\QQ_p(1-j))$ vanishes for all $j > 1$. In \cite[Th. 1.8(3)]{DJN} it is shown by de Jeu and Navilarekallu that this conjecture is equivalent to asserting that for any $j > 1$ the Deligne-Ribet $p$-adic $L$-function $L_p(\omega^{1-j},s)$ should not vanish at  $s=j$. For any integer $j$ for which this prediction is valid, the approach of \cite[\S 2.2.4]{bks2-2} shows that there is a canonical exact sequence
\begin{eqnarray}\label{sch short}
0 \to H^1(\cO_{K,S}, \QQ_p(1-j)) \to H^1(K_p,\QQ_p(j))^\ast \to \QQ_p K_{2j-1}(\cO_K)^\ast \to 0.
\end{eqnarray}
Using this sequence, it can be shown that the isomorphism
\begin{multline*} \CC_p {\det}_{\ZZ_p}^{-1}(\rgamma_c(\cO_{K,S}, \ZZ_p(j)))\\ \simeq \CC_p \left({\det}_\ZZ(K_{2j-1}(\cO_K))  \otimes_{\ZZ} {\det}^{-1}_{\QQ_p}(H^1(K_p,\QQ_p(j)))  \otimes_{\ZZ_p} {\rm det}_{\ZZ_p}(Y_K(j))\right)\end{multline*}
obtained above coincides with the composition
\begin{eqnarray*}
& &\CC_p{\det}_{\ZZ_p}^{-1}(\rgamma_c(\cO_{K,S},\ZZ_p(j)))\\
 &\stackrel{(\ref{av isom})}{\simeq}& \CC_p\left({\det}_{\ZZ_p}^{-1}(\rgamma(\cO_{K,S},\ZZ_p(1-j))) \otimes_{\ZZ_p} {\det}_{\ZZ_p}(Y_K(j))\right) \\
&\stackrel{(\ref{qp})}{\simeq} &\CC_p\left( {\bigwedge}_{\ZZ_p}^{r_j} H^1(\cO_{K,S},\ZZ_p(1-j)) \otimes_{\ZZ_p} {\bigwedge}_{\ZZ_p}^{r_j} Y_K(j) \right)\\
&\stackrel{(\ref{sch short})}{\simeq}& \CC_p\left( {\bigwedge}_\ZZ^{r-r_j} K_{2j-1}(\cO_K) \otimes_\ZZ {\bigwedge}_{\QQ_p}^r H^1(K_p,\QQ_p(j))^\ast \otimes_{\ZZ_p} {\bigwedge}_{\ZZ_p}^{r_j} Y_K(j) \right),
\end{eqnarray*}
where $r:=[K:\QQ]$.
\end{remark}

\subsubsection{The Tamagawa number conjecture}

We can now give a precise statement of the Tamagawa number conjecture for the pair $(h^0(K)(j),p)$.

\begin{conjecture}[{${\rm TNC}_p(h^0(K)(j))$}]\label{tnc conj}
Let $K$ be a number field and $j$ an integer. Then there exists a (unique) $\ZZ_p$-basis
$$z_{K,S} \in {\det}_{\ZZ_p}^{-1}(\rgamma_c(\cO_{K,S},\ZZ_p(j)))$$
with the property that
$$\vartheta_{\ZZ_p(j)}(z_{K,S})=\zeta_{K,S}^\ast(j).$$
\end{conjecture}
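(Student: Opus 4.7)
The plan is to combine three inputs: Theorem \ref{ltnc} (the local Tamagawa number conjecture), a functional equation relating ${\rm TNC}_p(h^0(K)(j))$ and ${\rm TNC}_p(h^0(K)(1-j))$ derived from the Artin-Verdier duality isomorphism (\ref{cisom}), and the Iwasawa Main Conjecture of Wiles for totally real fields. In full generality the conjecture is open; the target is, realistically, to establish it for $K$ totally real and $p$ unramified in $K$ whenever $j$ is not pathological, with $j\in\{0,1\}$ handled separately by the analytic class number formula. Passing via the Artin-Verdier mirror $1-j$, one is reduced to cases handled by the theory of Deligne-Ribet.

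First I would prove the ``functional equation''. Concretely, (\ref{cisom}) together with the compact-cohomology exact triangle for $\rgamma_c(\cO_{K,S},\ZZ_p(1-j))$ identifies the determinantal module appearing in ${\rm TNC}_p(h^0(K)(j))$ with that appearing in ${\rm TNC}_p(h^0(K)(1-j))$ up to an explicit twist by local cohomology at places in $S$. Under this identification, the map $\vartheta_{\ZZ_p(j)}$ should factor as the product of $\vartheta_{\ZZ_p(1-j)}$, the local map $\vartheta_j^{\rm loc}$ of Definition \ref{loc def}, and an archimedean/discriminant normalization coming from the comparison of the two avatars of the period isomorphism (\ref{period}). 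The last factor is controlled by Lemma \ref{fe0} and a close analogue of Lemma \ref{compute}, and the upshot should be that ${\rm TNC}_p(h^0(K)(j))$ is equivalent to the conjunction of ${\rm TNC}_p(h^0(K)(1-j))$ and ${\rm TNC}_p^{\rm loc}(h^0(K)(j))$.

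Granting this reduction together with Theorem \ref{ltnc}, it remains to verify ${\rm TNC}_p(h^0(K)(1-j))$ for $j\ge 2$ when $K$ is totally real. For this I would appeal to the Iwasawa Main Conjecture: the Deligne-Ribet $p$-adic $L$-function is a $\Lambda$-basis of the appropriate determinantal module of Iwasawa cohomology (as indicated in Remark \ref{higher col}), and its specialization at $\chi_{\rm cyc}^{1-j}$ via a twist map of the shape (\ref{twist map}) produces the required $\ZZ_p$-basis $z_{K,S}$ satisfying $\vartheta_{\ZZ_p(1-j)}(z_{K,S})=\zeta_{K,S}^\ast(1-j)$, directly from the interpolation property of the Deligne-Ribet $L$-function.

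The main obstacle will be twofold. First, the functional equation step requires an exact bookkeeping of the $2$-powers, factors of $\per$, signs, $p$-Euler factors and powers of $D_K$ that enter into $\vartheta_{\ZZ_p(j)}$; this is exactly the type of computation already carried out in the local setting in Lemmas \ref{fe0}, \ref{compute} and \ref{exp rec}, but now has to be done globally and compatibly with the Borel regulator (\ref{borel reg iso}) and Chern character (\ref{chern iso}). Second, and more seriously, deducing a $p$-integral statement of ${\rm TNC}_p(h^0(K)(1-j))$ from Wiles's theorem requires either the vanishing of the cyclotomic $\mu$-invariant for $K$ (unknown in general) or a device to descend equivariantly without it; absent such input one can expect to obtain the conjecture only up to a $p$-adic unit, matching the partial result referred to in the text as Theorem \ref{tnc evidence}.
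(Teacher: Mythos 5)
The statement you are asked to ``prove'' is a \emph{conjecture}, and the paper does not prove it in full generality; the paper's actual contribution is Theorem \ref{tnc evidence}, which establishes certain special cases. You recognize this and realistically target those cases, and your overall strategy --- reduce ${\rm TNC}_p(h^0(K)(j))$ to ${\rm TNC}_p(h^0(K)(1-j))$ via a functional equation, feed in the local statement from Theorem \ref{ltnc}, and then invoke the Iwasawa Main Conjecture --- is precisely the chain Proposition \ref{fe compatible} $+$ Theorem \ref{ltnc} $+$ Theorem \ref{IMC} that the paper uses in the proof of Theorem \ref{tnc evidence}(i). So the architecture of your proposal is the right one and matches the paper.

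That said, there are two substantive errors. First, the range of $j$ you claim is too large: you assert that the IMC step handles ${\rm TNC}_p(h^0(K)(1-j))$ for all $j\ge 2$, but for $K$ totally real and $j$ odd $>1$ one has $\zeta_K(1-j)=0$ (a trivial zero at a negative even integer), so the Deligne--Ribet interpolation property says nothing about $\zeta_{K,S}^\ast(1-j)$ and Theorem \ref{IMC} alone cannot deliver the required basis. This is exactly why the paper's Theorem \ref{tnc evidence}(i) is restricted to $j$ positive \emph{even} for totally real $K$; the odd case is instead conditional on the $p$-adic Beilinson conjecture (see Conjecture \ref{pBC} and Proposition \ref{cor pbc}). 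Second, your worry about $\mu$-invariants is misplaced in this setting. The ring $\Lambda=\ZZ_p[[\Gal(K(\mu_{p^\infty})^+/K)]]$ decomposes along a finite quotient of order prime to $p$, so it is a product of power-series rings over discrete valuation rings, and Wiles's theorem \cite{wiles} gives the needed equality of ideals \emph{unconditionally}; no $\mu=0$ hypothesis is required, and the resulting statement is a genuine $\ZZ_p$-integral equality, not one ``up to a $p$-adic unit''. Vanishing of $\mu$ only enters if one wants a fully \emph{equivariant} main conjecture over a $p$-extension, which is not what Theorem \ref{IMC} requires. Finally, you omit the other case the paper actually proves, namely Theorem \ref{tnc evidence}(ii) for imaginary quadratic $K$ with $p>3$ split, which is obtained by the same functional-equation device but with Johnson--Leung's result \cite{JL} replacing Theorem \ref{IMC}.
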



In the sequel we shall make use of the following (well-known) compatibility result concerning this conjecture.

\begin{proposition}\label{fe compatible} If ${\rm TNC}_p^{\rm loc}(h^0(K)(j))$ (Conjecture \ref{localTNC}) is valid, then the assertions of ${\rm TNC}_p(h^0(K)(j))$ and ${\rm TNC}_p(h^0(K)(1-j))$ are equivalent.
\end{proposition}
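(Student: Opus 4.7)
The strategy is to exhibit a commutative diagram which, granted Conjecture \ref{localTNC}, converts any candidate basis for the TNC at $j$ into one for the TNC at $1-j$, with conversion factor equal to $\vartheta_j^{\rm loc}(z_j) = \zeta_{K,\{p\}}^{\ast}(1-j)/\zeta_{K,\{p\}}^{\ast}(j)$. Since $S = S_\infty(K)\cup S_p(K)$ gives $\zeta_{K,S}^{\ast}(m) = \zeta_{K,\{p\}}^{\ast}(m)$ for $m=j,1-j$, this local factor is precisely the ratio of the two global leading terms appearing in Conjecture \ref{tnc conj}.

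The first step is to construct a canonical isomorphism
\[\mu\colon {\det}_{\ZZ_p}^{-1}(\rgamma_c(\cO_{K,S},\ZZ_p(j)))\otimes_{\ZZ_p} L_j\ \xrightarrow{\sim}\ {\det}_{\ZZ_p}^{-1}(\rgamma_c(\cO_{K,S},\ZZ_p(1-j))),\]
where $L_j := {\det}_{\ZZ_p}^{-1}(\rgamma(K_p,\ZZ_p(j)))\otimes_{\ZZ_p}{\det}_{\ZZ_p}^{-1}(\bigoplus_{\sigma:K\hookrightarrow\CC}\ZZ_p(j))$ is the source of $\vartheta_j^{\rm loc}$. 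This is obtained by combining the Artin--Verdier isomorphism (\ref{av isom}) at parameter $j$, the standard localisation triangle
\[\rgamma_c(\cO_{K,S},\ZZ_p(1-j))\to \rgamma(\cO_{K,S},\ZZ_p(1-j))\to \rgamma(K_p,\ZZ_p(1-j))\]
(the archimedean Tate cohomology vanishing since $p$ is odd), local Tate duality ${\det}_{\ZZ_p}(\rgamma(K_p,\ZZ_p(1-j)))\simeq {\det}_{\ZZ_p}^{-1}(\rgamma(K_p,\ZZ_p(j)))$, and the trivialisations ${\det}_{\ZZ_p}(Y_K(j))=\ZZ_p={\det}_{\ZZ_p}(\bigoplus_{\sigma}\ZZ_p(j))$ from \S\ref{labeling}.

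The second, and main, step is to verify that the diagram
\[\begin{CD}
\CC_p{\det}_{\ZZ_p}^{-1}(\rgamma_c(\cO_{K,S},\ZZ_p(j)))\otimes_{\CC_p} \CC_pL_j @>\mu>> \CC_p{\det}_{\ZZ_p}^{-1}(\rgamma_c(\cO_{K,S},\ZZ_p(1-j)))\\
@VV\vartheta_{\ZZ_p(j)}\otimes\vartheta_j^{\rm loc} V @VV\vartheta_{\ZZ_p(1-j)}V\\
\CC_p @= \CC_p
\end{CD}\]
commutes. Unpacking the definitions from \S\ref{period section} and Definition \ref{loc def}, both vertical compositions are built from exactly the same three comparison data --- the Bloch--Kato logarithm at $p$-adic places, the composition of Chern character (\ref{chern iso}) and Borel regulator (\ref{borel reg iso}), and the Betti--de Rham period (\ref{period}) --- but bundled in two different ways. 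The key point is that the normalising constants $2$ and $\tfrac{1}{2}\cdot 2\pi\sqrt{-1}$ built into (\ref{decomp}) are exactly those required for this bundling to agree without introducing any spurious factor; this is Kato's normalisation, alluded to just after (\ref{decomp}), and its internal consistency is reflected in the explicit archimedean computation of Lemma \ref{compute}.

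Granting the diagram, the proposition is immediate. Write $z_{K,S}$ for a $\ZZ_p$-basis of ${\det}_{\ZZ_p}^{-1}(\rgamma_c(\cO_{K,S},\ZZ_p(j)))$ and set $z'_{K,S} := \mu(z_{K,S}\otimes z_j)$, a $\ZZ_p$-basis of ${\det}_{\ZZ_p}^{-1}(\rgamma_c(\cO_{K,S},\ZZ_p(1-j)))$. Commutativity yields
\[\vartheta_{\ZZ_p(1-j)}(z'_{K,S}) \ =\ \vartheta_{\ZZ_p(j)}(z_{K,S})\cdot \vartheta_j^{\rm loc}(z_j) \ =\ \vartheta_{\ZZ_p(j)}(z_{K,S})\cdot \frac{\zeta_{K,S}^{\ast}(1-j)}{\zeta_{K,S}^{\ast}(j)},\]
so $\vartheta_{\ZZ_p(j)}(z_{K,S}) = \zeta_{K,S}^{\ast}(j)$ if and only if $\vartheta_{\ZZ_p(1-j)}(z'_{K,S}) = \zeta_{K,S}^{\ast}(1-j)$. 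As $z_{K,S}\mapsto z'_{K,S}$ is a bijection between the sets of $\ZZ_p$-bases on the two sides, this is the desired equivalence. The main obstacle is the commutativity in Step (ii): although it is implicit in the Fontaine--Perrin-Riou / Burns--Flach formalism, a self-contained verification requires careful bookkeeping --- tracing the Bloch--Kato logarithm through local Tate duality, matching it with the Chern character / Borel regulator composite on the global side, and checking that the archimedean normalisations on either side of Artin--Verdier duality line up exactly.
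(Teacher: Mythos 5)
Your proposal is correct and follows essentially the same strategy as the paper: reduce the equivalence to the commutativity of the diagram relating $\vartheta_j^{\rm loc}\otimes\vartheta_{\ZZ_p(j)}$ to $\vartheta_{\ZZ_p(1-j)}$ (this is precisely the paper's diagram (\ref{fe})), and then observe that commutativity can be checked by unwinding the definitions of the three period-regulator maps. The only packaging difference is in how you assemble the top isomorphism: you invoke Artin--Verdier at $j$, the localisation triangle at $1-j$ and local Tate duality at $p$-adic places explicitly, while the paper cites Artin--Verdier, the $\rgamma_c$-versus-$\rgamma$ localisation and the archimedean decomposition (\ref{decomp}); both bundles produce the same isomorphism, up to the usual sign ambiguities inherited from the chosen ordered bases that trivialise $Y_K(\pm)$ and $\bigoplus_\sigma\ZZ_p(j)$, and like the paper you leave the final diagram-chase to a (reasonable but unwritten) bookkeeping argument.
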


\begin{proof} This can be deduced as a special case of \cite[Th. 5.3]{BFetnc} and can also be derived more directly as follows.

Upon comparing the explicit statements of ${\rm TNC}_p(h^0(K)(j))$ and ${\rm TNC}_p(h^0(K)(1-j))$ with that of ${\rm TNC}_p^{\rm loc}(h^0(K)(j))$, the given claim is reduced to proving the following:  if one sets
$$D_j^{\rm loc}:={\det}_{\ZZ_p}^{-1}(\rgamma(K_p, \ZZ_p(j))) \otimes_{\ZZ_p} {\det}^{-1}_{\ZZ_p} \left( \bigoplus_{\sigma: K \hookrightarrow \CC} \ZZ_p(j)\right)$$
then the diagram
\begin{equation}\label{fe}\displaystyle
\xymatrix{
\CC_p\left( D_j^{\rm loc}\otimes_{\ZZ_p} {\det}_{\ZZ_p}^{-1}(\rgamma_c(\cO_{K,S},\ZZ_p(j)))  \right)\ar[r]^{\quad\quad\sim} \ar[d]_{\vartheta_j^{\rm loc}\otimes \vartheta_{\ZZ_p(j)}}& \CC_p {\det}_{\ZZ_p}^{-1}(\rgamma_c(\cO_{K,S},\ZZ_p(1-j)))  \ar[d]^{\vartheta_{\ZZ_p(1-j)}}\\
\CC_p \otimes_{\CC_p} \CC_p \ar[r]_{a\otimes b \mapsto ab} & \CC_p
}
\end{equation}
commutes. The isomorphism in the upper row of this diagram is induced by the Artin-Verdier Duality Theorem via (\ref{av isom}), the explicit relation between $\rgamma_c(\cO_{K,S},\ZZ_p(j))$ and $\rgamma(\cO_{K,S},\ZZ_p(j))$ that follows from the definition of compactly supported cohomology and (\ref{decomp}). 
%
%
The commutativity of the diagram can then be checked by means of an explicit comparison of the definitions of  $\vartheta_j^{\rm loc}, \vartheta_{\ZZ_p(j)}$ and $\vartheta_{\ZZ_p(1-j)}$. \end{proof}

\subsection{Generalized Stark elements}\label{gse sec}

In this section we fix an integer $j$ with $j \notin \{0,1\}$ and review the definition of the generalized Stark elements from \cite{bks2-2}.

\begin{definition}\label{gse def} If $j$ is either negative, or both positive and such that  $H^2(\cO_{K,S},\QQ_p(1-j))$ vanishes (cf. \S\ref{neg pr map section} and Remark \ref{sch rem}), then there is a canonical identification of $\CC_p$-spaces
\begin{align*}\CC_p {\det}^{-1}_{\ZZ_p}(\rgamma_c(\cO_{K,S}, \ZZ_p(j)))\stackrel{(\ref{cisom})}{\simeq}\, &\CC_p {\det}^{-1}_{\ZZ_p}(\rgamma(\cO_{K,S}, \ZZ_p(1-j))) \\ \stackrel{(\ref{qp})}{\simeq}\, &\CC_p {\bigwedge}_{\ZZ_p}^{r_j} H^1(\cO_{K,S},\ZZ_p(1-j)).\end{align*} 

In any such case the generalized Stark element
$$\eta_{K,S}(j)\in \CC_p {\bigwedge}_{\ZZ_p}^{r_j} H^1(\cO_{K,S},\ZZ_p(1-j)) $$
is defined to be the image  of $\vartheta_{\ZZ_p(j)}^{-1}(\zeta_{K,S}^\ast(j)) \in \CC_p {\det}^{-1}_{\ZZ_p}(\rgamma_c(\cO_{K,S}, \ZZ_p(j)))$ under this isomorphism, where $\vartheta_{\ZZ_p(j)}$ is the period-regulator isomorphism defined in \S\ref{period section}.

In the case that $j$ is positive and $H^2(\cO_{K,S},\QQ_p(1-j))$ does not vanish, we set $\eta_{K,S}(j):=0$.
\end{definition}

\begin{remark} \label{explicit stark}
If $K$ is totally real and $j$ is odd, then $\eta_{K,S}(j)$ belongs to $\CC_p$ and is explicitly described as follows.
\begin{itemize}
\item[(i)] If $j<0$, then $\eta_{K,S}(j)=\zeta_{K,S}(j)$ and so belongs to $\QQ$ (by a  well-known result of Klingen and Siegel).
\item[(ii)] If $j > 1$, then $\eta_{K,S}(j)=\det(\kappa)\cdot \zeta_{K,S}(j)$, where $\kappa$ is the composite homomorphism
    $$\CC_p K_{2j-1}(\cO_K) \xrightarrow{{\rm syn}^j_p} \CC_p K \stackrel{(\ref{jj})}{\simeq} \CC_p H_K(j-1) \stackrel{{\rm reg}_j^{-1}}{\simeq} \CC_p K_{2j-1}(\cO_K).$$
Here ${\rm syn}^j_p$ is induced by the syntomic regulator maps $K_{2j-1}(\cO_{K_v}) \to K_v$ for each $v$ in $S_p(K)$ and is bijective if and only if $H^2(\cO_{K,S},\QQ_p(1-j))$ vanishes.
\end{itemize}
\end{remark}

\begin{remark}\label{even remark}
If $K$ is totally real and $j$ is positive even, then the isomorphisms (\ref{borel reg iso}) and (\ref{chern iso}) combine to imply that $H^1(\cO_{K,S},\QQ_p(j))\simeq \QQ_p K_{2j-1}(\cO_K)$ vanishes and this in turn implies $H^2(\cO_{K,S},\QQ_p(1-j))$ vanishes. In this case, therefore, the generalized Stark element $\eta_{K,S}(j)$ is non-trivial. In addition, whilst $\eta_{K,S}(j)$ is, a priori, an element of $\CC_p {\bigwedge}_{\ZZ_p}^{[K:\QQ]} H^1(\cO_{K,S},\ZZ_p(1-j))$, Theorem \ref{tnc evidence}(i) below implies that if $p$ does not ramify in $K$, then $\eta_{K,S}(j)$ is `rational' in the following sense:
$$\eta_{K,S}(j) \in \QQ_p {\bigwedge}_{\ZZ_p}^{[K:\QQ]} H^1(\cO_{K,S},\ZZ_p(1-j)). $$
\end{remark}

\subsection{Deligne-Ribet $p$-adic $L$-functions}\label{DR section}
In this subsection, let $K$ be a totally real field.

We set
$$ \Lambda:=\ZZ_p[[\Gal(K(\mu_{p^\infty})^+/K)]]$$
%
and write $Q(\Lambda)$ for the total quotient ring of $\Lambda$.

We fix a topological generator $\gamma$ of $\Gal(K(\mu_{p^\infty})^+/K(\mu_p)^+)$ and recall that there exists a canonical element
$$\cL \in \frac{1}{\gamma-1} \cdot \Lambda \subset Q(\Lambda)$$
(the `Deligne-Ribet $p$-adic $L$-function') that satisfies
$$\chi_{\rm cyc}^j (\cL)=\zeta_{K,S}(1-j) \text{ for any positive even integer $j$}.$$

\begin{theorem}[Iwasawa main conjecture] \label{IMC}
The complex $\rgamma_c(\cO_{K,S}, \Lambda^\#(1)) \lotimes_\Lambda Q(\Lambda)$ is acyclic and there exists a (unique) $\Lambda$-basis
$$\fz \in {\det}_{\Lambda}^{-1}(\rgamma_c(\cO_{K,S}, \Lambda^\#(1) ))$$
such that the natural map
$${\det}_\Lambda^{-1} (\rgamma_c(\cO_{K,S}, \Lambda^\#(1))) \hookrightarrow {\det}_\Lambda^{-1}(\rgamma_c(\cO_{K,S}, \Lambda^\#(1))) \otimes_\Lambda Q(\Lambda)  =Q(\Lambda)$$
sends $\fz$ to $\cL$.
\end{theorem}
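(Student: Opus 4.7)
My plan is to view the statement as a determinantal reformulation of Wiles' classical Iwasawa main conjecture for totally real fields, and to deduce it by checking the asserted equality at every height-one prime of the (regular) ring $\Lambda$.

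First I would verify that $\rgamma_c(\cO_{K,S},\Lambda^\#(1))$ is a perfect complex of $\Lambda$-modules concentrated in cohomological degrees $1,2,3$, and use Shapiro's lemma to identify its cohomology with standard Iwasawa modules along the tower $K_\infty := K(\mu_{p^\infty})^+$: an inverse limit of $p$-completed $S$-units in degree $1$, an inverse limit of $p$-parts of $S$-ray class groups in degree $2$, and a semi-local contribution at $S$-places in degree $3$. Since $K$ is totally real, each of these $\Lambda$-modules is torsion: for $H^2$ this is Iwasawa's classical theorem, and for $H^1$ it is the weak Leopoldt conjecture, which is known in this setting. This torsion-ness is precisely the asserted acyclicity of $\rgamma_c(\cO_{K,S},\Lambda^\#(1)) \lotimes_\Lambda Q(\Lambda)$, and it gives a canonical embedding of $\det_\Lambda^{-1}(\rgamma_c(\cO_{K,S},\Lambda^\#(1)))$ into $Q(\Lambda)$ as an invertible fractional ideal.

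It then remains to compare this fractional ideal with $\Lambda \cdot \cL$. Since $\Lambda$ is (a product of) regular domain(s), this comparison is local at every height-one prime $\fp$ of $\Lambda$: at each such prime, the localized inverse determinant is computed as an alternating product of characteristic lengths of the three cohomology modules, and Wiles' main conjecture for totally real fields --- combined with the interpolation property $\chi_{\rm cyc}^j(\cL) = \zeta_{K,S}(1-j)$ for positive even $j$ --- identifies this alternating product with the valuation of $\cL$ at $\fp$. The factor $(\gamma-1)^{-1}$ that appears in the explicit form of $\cL$ is accounted for by the contribution of $H^3$, coming from the trivial-zero phenomenon at the cyclotomic character.

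The hard part of this programme will be executing the comparison at the height-one prime $(p)$: the claim is not merely a pseudo-isomorphism identity but a genuine equality of invertible $\Lambda$-modules, so one must match $\mu$-invariants on both sides and track the semi-local Euler factor contributions through the cyclotomic tower with the correct normalisations (in particular, the one fixed by the canonical basis $\xi$ of $\S\ref{fixed}$ that enters the Deligne-Ribet construction). This is essentially subsumed in the full form of Wiles' theorem; once it is in place, the existence and uniqueness of $\fz$ follow, since a $\Lambda$-basis of an invertible fractional ideal is determined by its image in $Q(\Lambda)$.
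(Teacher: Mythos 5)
Your strategy---reducing the determinantal statement to Wiles' main conjecture for totally real fields---is the same as the paper's, but the cohomological identification at its heart is incorrect. The complex $\rgamma_c(\cO_{K,S},\Lambda^\#(1))$ is in fact acyclic outside degrees $2$ and $3$: one has $H^1_c=0$ (this, not merely $\Lambda$-torsion, is the content of weak Leopoldt, seen via Artin--Verdier duality against $H^2(\cO_{K,S},\Lambda^\#)$), $H^2_c\cong \fX_K:=\Gal(M_K/K(\mu_{p^\infty})^+)$ where $M_K$ is the maximal abelian pro-$p$ extension of $K(\mu_{p^\infty})^+$ unramified outside $S$, and $H^3_c\cong\ZZ_p$ (the cokernel of the global-to-semi-local map on $H^2$, by class field theory). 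The modules you name---inverse limits of $S$-units in degree $1$, $S$-ray class groups in degree $2$, a semi-local term in degree $3$---are rather the cohomology of $\rgamma(\cO_{K,S},\Lambda^\#(1))$ together with the cone of the localisation map, not of $\rgamma_c$ itself.

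This matters for the reduction to Wiles: the whole point of the identification is to exhibit $\det_\Lambda^{-1}(\rgamma_c)$ inside $Q(\Lambda)$ as the fractional ideal ${\rm char}_\Lambda(\fX_K)\cdot{\rm char}_\Lambda(\ZZ_p)^{-1}$, because that product is precisely what Wiles' Theorems 1.3 and 1.4 identify with $\Lambda\cdot\cL$. With the modules in your description one would first need a further duality or Iwasawa-adjoint argument to reach the $S$-ramified module $\fX_K$ appearing in Wiles' statement, and your proposal does not carry this out. Two smaller points: the localisation at height-one primes is superfluous once the correct identification is in hand, since the determinant of a generically acyclic perfect complex over the regular ring $\Lambda$ is literally the alternating product of characteristic ideals, making the comparison with $\Lambda\cdot\cL$ immediate; and the worry about matching $\mu$-invariants at $(p)$ is a red herring, because Wiles' theorem is already an equality of ideals in $\Lambda$ (not merely up to pseudo-isomorphism) and hence matches $\mu$-parts automatically. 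Your observation that the $(\gamma-1)^{-1}$ denominator of $\cL$ is absorbed by the degree-$3$ contribution is, however, correct.
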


\begin{proof} It is well-known that the complex $\rgamma_c(\cO_{K,S}, \Lambda^\#(1))$ is acyclic outside degrees $2$ and $3$ and such that
$$H^i_c(\cO_{K,S}, \Lambda^\#(1)) = \begin{cases}
\fX_K:=\Gal(M_K/ K(\mu_{p^\infty})^+) &\text{ if $i=2$,}\\
\ZZ_p &\text{ if $i=3$,}
\end{cases}$$
where $M_K$ denotes the maximal abelian pro-$p$ extension of $K(\mu_{p^\infty})^+$ that is unramified outside $S$. In particular, since $\fX_K$ is known to be a torsion $\Lambda$-module, these descriptions imply that $\rgamma_c(\cO_{K,S}, \Lambda^\#(1)) \lotimes_\Lambda Q(\Lambda)$ is acyclic and also that the image of the natural composite map
\begin{equation*}\label{char}
 {\det}_\Lambda^{-1} (\rgamma_c(\cO_{K,S}, \Lambda^\#(1))) \hookrightarrow {\det}_\Lambda^{-1}(\rgamma_c(\cO_{K,S}, \Lambda^\#(1))) \otimes_\Lambda Q(\Lambda) = Q(\Lambda)\end{equation*}
is equal to
\[ {\det}_\Lambda^{-1}(\fX_K[-2])\cdot {\det}_\Lambda^{-1}(\ZZ_p[-3]) = {\rm char}_\Lambda(\fX_K) \cdot {\rm char}_\Lambda(\ZZ_p)^{-1}.
\]

The claimed result therefore follows directly from the fact that
$$\Lambda\cdot \cL =  {\rm char}_\Lambda(\fX_K) \cdot {\rm char}_\Lambda(\ZZ_p)^{-1}, $$
as proved by Wiles in \cite[Th. 1.3 and 1.4]{wiles}.
\end{proof}

\subsection{Evidence for the Tamagawa number conjecture}

Theorem \ref{ltnc} leads to the following evidence in support of Conjecture \ref{tnc conj}.

\begin{theorem}\label{tnc evidence} If $p$ is odd and unramified in $K$, then ${\rm TNC}_p(h^0(K)(j))$ is valid in all of the following cases.

\begin{itemize}
\item[(i)] $K$ is totally real and $j$ is positive even.
\item[(ii)] $K$ is imaginary quadratic and $p$ is bigger than $3$ and splits in $K$.
\end{itemize}
\end{theorem}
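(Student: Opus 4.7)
The plan is to combine Theorem~\ref{ltnc} with Proposition~\ref{fe compatible}. Since $p$ is unramified in $K$ in each case of the theorem, Theorem~\ref{ltnc} establishes ${\rm TNC}_p^{\rm loc}(h^0(K)(j))$ for every positive integer $j$, and Proposition~\ref{fe compatible} then shows that ${\rm TNC}_p(h^0(K)(j))$ is equivalent to ${\rm TNC}_p(h^0(K)(1-j))$. It therefore suffices in each case to exhibit a canonical $\ZZ_p$-basis of $\det_{\ZZ_p}^{-1}(\rgamma_c(\cO_{K,S},\ZZ_p(1-j)))$ whose image under $\vartheta_{\ZZ_p(1-j)}$ equals $\zeta_{K,S}^{\ast}(1-j)$.

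For part~(i), where $K$ is totally real and $1-j$ is a negative odd integer, the approach is to perform Iwasawa descent from the canonical $\Lambda$-basis $\fz$ of $\det_{\Lambda}^{-1}(\rgamma_c(\cO_{K,S},\Lambda^{\#}(1)))$ supplied by the main conjecture (Theorem~\ref{IMC}). Specialising $\fz$ along the character $\chi_{\rm cyc}^{-j}$, which converts $\Lambda^{\#}(1)$ into $\ZZ_p(1-j)$, produces a candidate basis in $\det_{\ZZ_p}^{-1}(\rgamma_c(\cO_{K,S},\ZZ_p(1-j)))$. The interpolation formula $\chi_{\rm cyc}^{j}(\cL)=\zeta_{K,S}(1-j)$, combined with the Klingen-Siegel fact that $\zeta_{K,S}(1-j)=\zeta_{K,S}^{\ast}(1-j)$ in this range, then matches the required leading-term identity. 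For part~(ii), where $K$ is imaginary quadratic with $p>3$ and $p$ split in $K$, the same strategy applies, but with the role of Theorem~\ref{IMC} played by Rubin's main conjecture for imaginary quadratic fields. The proof of Rubin's result via the Euler system of elliptic units requires precisely the hypotheses $p>3$ and $p$ split in $K$, and the descent argument parallels that of part~(i), with the Katz two-variable $p$-adic $L$-function (and its known interpolation at critical integers) replacing $\cL$.

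The principal obstacle in both cases is to verify that the specialisation/twist map on determinant modules is compatible with the explicit period-regulator isomorphism $\vartheta_{\ZZ_p(1-j)}$ constructed in \S\ref{neg pr map section}. Concretely, this amounts to tracking how Artin-Verdier duality (\ref{av isom}), the comparison (\ref{qp}), the Chern character (\ref{chern iso}) and the Borel regulator (\ref{borel reg iso}) interact with the twist by $\chi_{\rm cyc}^{-j}$, and then matching signs and Euler factors between the Iwasawa-theoretic and $\ZZ_p$-theoretic sides. Much of this work is closely parallel to, and can partly be borrowed from, the calculations already carried out in the proof of Theorem~\ref{ltnc}; in particular, the appearance of the Euler factors $(1-{\N}v^{j-1})/(1-{\N}v^{-j})$ in Corollary~\ref{cor comp} is precisely the sort of compatibility check one must perform here.
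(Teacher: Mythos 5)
For part (i) your route coincides with the paper's: combine Theorem~\ref{ltnc} with Proposition~\ref{fe compatible} to transfer the problem to ${\rm TNC}_p(h^0(K)(1-j))$ at a negative odd integer $1-j$, and then obtain that from Theorem~\ref{IMC} by specialising the basis $\fz$. One small remark on your ``principal obstacle'': at negative odd integers $1-j$ with $K$ totally real, the Klingen--Siegel theorem makes $\zeta_{K,S}(1-j)$ rational, $H^2(\cO_{K,S},\QQ_p(j))$ vanishes (Soul\'e), and $Y_K$, $H_K(\cdot)^+$ and $\QQ K_{2m-1}(\cO_K)$ that enter $\vartheta_{\ZZ_p(1-j)}$ are all zero, so the Chern-character/Borel-regulator bookkeeping you worry about degenerates and there is essentially nothing transcendental to track; this is why the paper can describe the IMC descent there as ``direct''.

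For part (ii) your route is genuinely different from the paper's, and I think underestimates the difficulty. The paper does not rederive the non-positive-twist cases at all: it cites Johnson-Leung \cite{JL}, whose main theorem gives ${\rm TNC}_p(h^0(K)(1-j))$ for all positive $j$ when $K$ is imaginary quadratic, $p>3$ and $p$ splits, supplements this with the analytic class number formula for the case $1-j=0$, and then applies Theorem~\ref{ltnc} and Proposition~\ref{fe compatible} to cover $j\ge 1$. You propose to replace this citation by a from-scratch descent from Rubin's two-variable main conjecture via the Katz $p$-adic $L$-function ``in parallel with part (i)''. But the parallel breaks down: for imaginary quadratic $K$ one has $\zeta_K(s)=\zeta(s)L(\chi_K,s)$ with $\chi_K$ an odd character, so no integer is a critical point of $\zeta_K$, and every descent from the Iwasawa level has to match genuine $p$-adic regulators against complex regulators rather than rational numbers. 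That regulator comparison is precisely the substance of Johnson-Leung's paper, and is far from the routine bookkeeping you expect from the part (i) analogy. You also omit the $j=1$ (class number formula) case, which does not follow from the main-conjecture descent.
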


\begin{proof}

At the outset we note that, for any odd prime $p$, Theorem \ref{IMC} implies directly that ${\rm TNC}_p(h^0(K)(j))$ is valid for any negative odd integer $j$, whilst Theorem \ref{ltnc} implies that ${\rm TNC}^{\rm loc}_p(h^0(K)(j))$ is valid for all $j$ provided that $p$ does not ramify in $K$.

In particular, if we assume that $p$ does not ramify in $K$, then claim (i) follows directly upon combining Proposition \ref{fe compatible} with Theorems \ref{ltnc} and \ref{IMC}.

In a similar way, claim (ii) is reduced to the main result of Johnson-Leung \cite{JL} which asserts that, under the given hypotheses, ${\rm TNC}_p(h^0(K)(1-j))$ is valid for all positive integers $j$. (The case $j=1$ follows from the class number formula.)
\end{proof}

%
%
%

To consider the validity of ${\rm TNC}_p(h^0(K)(j))$ for integers $j$ that are both odd and bigger than one we first reformulate the  $p$-adic Beilinson conjecture in terms of generalized Stark elements (cf. Remark \ref{explicit stark}(ii)).

\begin{conjecture}[The $p$-adic Beilinson conjecture]\label{pBC}
Let $K$ be a totally real field. Then for each odd integer $j>1$ one has
$$\chi_{\rm cyc}^{1-j}(\cL)=\eta_{K,S}(j).$$
\end{conjecture}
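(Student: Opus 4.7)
The plan is to reformulate Conjecture \ref{pBC} as an instance of the Tamagawa number conjecture and then attempt to verify it using the Iwasawa main conjecture in combination with Theorem \ref{th2}. First, by Theorem \ref{IMC} the Deligne-Ribet $p$-adic $L$-function $\cL$ arises as the image of a canonical $\Lambda$-basis $\fz\in {\det}_\Lambda^{-1}(\rgamma_c(\cO_{K,S},\Lambda^\#(1)))$, and a direct specialization argument (parallel to the twist map (\ref{twist map}) and combined with the canonical isomorphism (\ref{cisom})) identifies $\chi_{\rm cyc}^{1-j}(\cL)$ with the image under $\vartheta_{\ZZ_p(j)}$ of the specialization $\chi_{\rm cyc}^{1-j}(\fz)\in {\det}_{\ZZ_p}^{-1}(\rgamma_c(\cO_{K,S},\ZZ_p(j)))$. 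Combined with the defining property of $\eta_{K,S}(j)$ in Definition \ref{gse def}, this shows that Conjecture \ref{pBC} is equivalent to the assertion that $\vartheta_{\ZZ_p(j)}$ sends $\chi_{\rm cyc}^{1-j}(\fz)$ to $\zeta_{K,S}^\ast(j)$, i.e., to ${\rm TNC}_p(h^0(K)(j))$ for the given totally real $K$ and odd $j>1$.

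Second, since $p$ is odd and unramified in $K$, Theorem \ref{ltnc} supplies ${\rm TNC}_p^{\rm loc}(h^0(K)(j))$, and so Proposition \ref{fe compatible} permits a further reformulation of Conjecture \ref{pBC} as ${\rm TNC}_p(h^0(K)(1-j))$. The latter concerns the leading term $\zeta_{K,S}^\ast(1-j)$ at the negative even integer $1-j$, where $\zeta_{K,S}(s)$ has a trivial zero whose order is controlled by the Borel regulator; the corresponding $p$-adic side should reflect the (derivative-like) value of $\cL$ at the non-interpolation character $\chi_{\rm cyc}^{1-j}$.

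Third, to try to verify this reformulated TNC, I would use the generalized Coleman-Ihara identity
$${\rm loc}_p(\eta_K(1-j))=\pm\eta_K(j)\cdot D_K^j\cdot\Phi_j$$
of Theorem \ref{th2}. Since the higher-rank Coates-Wiles homomorphism $\Phi_j$ is, by Definition \ref{defCW}, the $\chi_{\rm cyc}^j$-specialization of the higher-rank Coleman basis $\Phi_{\bm{x}}$ built in \S\ref{coleman review}, this identity plugs directly into the Iwasawa-theoretic framework that underpins Theorem \ref{IMC}. In principle, combining Theorems \ref{IMC} and \ref{th2} through the higher-rank Coleman map should pin down the leading-term behaviour of $\chi_{\rm cyc}^{1-j}(\fz)$ up to an auxiliary factor coming from the syntomic regulator $\kappa$ that enters $\eta_{K,S}(j)$ via Remark \ref{explicit stark}(ii). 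The principal obstacle, and the reason the statement is kept as a conjecture, is to show that this residual syntomic factor coincides with the Bloch-Kato regulator factor that Theorem \ref{th2} controls via the classical explicit reciprocity law: this matching is the essential content of a $p$-adic Beilinson conjecture in the sense of \cite{BBJR} and at present appears to require substantial input from rigid syntomic cohomology that goes well beyond the techniques used in the proof of Theorem \ref{ltnc}.
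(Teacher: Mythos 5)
This statement is a \emph{conjecture}, and the paper offers no proof of it. The only things the paper asserts about Conjecture \ref{pBC} are (a) that it is equivalent to \cite[Conj. 2.17(2)]{BBJR} and hence, by \cite[Rem. 4.18]{BBJR}, holds when $K$ is abelian over $\QQ$ (Remark \ref{abel}), and (b) that, under the hypothesis that $H^2(\cO_{K,S},\QQ_p(1-j))$ vanishes, Conjecture \ref{pBC} \emph{implies} ${\rm TNC}_p(h^0(K)(j))$ (Proposition \ref{cor pbc}, whose proof is noted to follow from Theorem \ref{IMC}). You are right that the conjecture remains open, and your final paragraph correctly locates the missing ingredient. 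But the intermediate steps of your proposal contain two real problems.

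First, the ``direct specialization argument'' in your first step is not consistent as written. The Iwasawa-theoretic passage from $\fz$ to $\cL$ uses the $Q(\Lambda)$-acyclicity trivialization of ${\det}_\Lambda^{-1}(\rgamma_c(\cO_{K,S},\Lambda^\#(1)))$, whose specialization at $\chi_{\rm cyc}^{1-j}$ is the \emph{canonical} $\QQ_p$-acyclicity trivialization $\iota$ of ${\det}_{\QQ_p}^{-1}(\rgamma_c(\cO_{K,S},\QQ_p(j)))$ coming from (\ref{cisom}) and (\ref{qp}) (recall $\rgamma_c(\cO_{K,S},\QQ_p(j))$ is acyclic here when $H^2(\cO_{K,S},\QQ_p(1-j))$ vanishes, since $K$ is totally real and $j$ is odd). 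That trivialization is \emph{not} the period-regulator map $\vartheta_{\ZZ_p(j)}$: indeed $\eta_{K,S}(j)$ is precisely $\iota(\vartheta_{\ZZ_p(j)}^{-1}(\zeta_{K,S}^*(j)))$, which by Remark \ref{explicit stark}(ii) equals $\det(\kappa)\cdot\zeta_{K,S}(j)$ rather than $\zeta_{K,S}^*(j)$ itself. So what one can say directly is that $\chi_{\rm cyc}^{1-j}(\cL) = \iota(\chi_{\rm cyc}^{1-j}(\fz))$; replacing $\iota$ by $\vartheta_{\ZZ_p(j)}$ in that identity is not a specialization computation but is tantamount to the assertion you are trying to prove. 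With the correct trivialization, the chain of equivalences does give ``pBC $\Leftrightarrow$ TNC$_p(h^0(K)(j))$'' (the content of Proposition \ref{cor pbc} plus a converse implication), but that reformulation does not produce a proof.

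Second, the appeal to Theorem \ref{CI} in your third step cannot close the gap, because Theorem \ref{CI} is an unconditional identity relating the two generalized Stark elements $\eta_{K,S}(1-j)$ and $\eta_{K,S}(j)$, \emph{both} of which are defined through complex Dedekind zeta values together with the Borel regulator and Bloch--Kato logarithm. The $p$-adic $L$-function $\cL$ does not enter Theorem \ref{CI} at all. In the paper the logic runs in the opposite direction: Theorem \ref{CI} is combined with the \emph{assumed} validity of Conjecture \ref{pBC} to deduce the generalized Coleman--Ihara formula Conjecture \ref{GCI}. Using Theorem \ref{CI} to try to establish Conjecture \ref{pBC} is therefore not a viable route. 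The missing input --- identifying the ``$p$-adic acyclicity'' trivialization $\iota$ with $\vartheta_{\ZZ_p(j)}$ at the non-interpolation characters $\chi_{\rm cyc}^{1-j}$ --- is exactly the content of the $p$-adic Beilinson conjecture, and is not supplied by anything proved in this paper.
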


\begin{remark}\label{abel}
With the usual notation of the $p$-adic $L$-function, we have
$$\chi^{1-j}_{\rm cyc}(\cL)=L_p(\omega^{1-j},j).$$
By using this fact one can show that 
Conjecture \ref{pBC} is equivalent to the conjecture \cite[Conj. 2.17(2)]{BBJR} of Besser et al. In particular, \cite[Rem. 4.18]{BBJR} implies that Conjecture \ref{pBC} is valid when $K$ is abelian over $\QQ$.\end{remark}

The next result (is not used in later sections and) follows immediately from Theorem \ref{IMC}.

\begin{proposition}\label{cor pbc}
Let $j>1$ be an odd integer such that $H^2(\cO_{K,S},\QQ_p(1-j))$ vanishes. Then Conjecture \ref{pBC} implies the validity of ${\rm TNC}_p(h^0(K)(j))$.
\end{proposition}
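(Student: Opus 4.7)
The strategy is to produce the required $\ZZ_p$-basis $z_{K,S}$ as the specialization at $\chi_{\rm cyc}^{1-j}$ of the Iwasawa zeta element $\fz$ of Theorem \ref{IMC}, and then to identify it with $\vartheta_{\ZZ_p(j)}^{-1}(\zeta_{K,S}^\ast(j))$ using Conjecture \ref{pBC} and Definition \ref{gse def}. First, the twist identity $\Lambda^\#(1) \otimes_{\Lambda,\chi_{\rm cyc}^{1-j}} \ZZ_p \cong \ZZ_p(j)$ yields $\rgamma_c(\cO_{K,S},\Lambda^\#(1)) \lotimes_{\Lambda,\chi_{\rm cyc}^{1-j}} \ZZ_p \simeq \rgamma_c(\cO_{K,S},\ZZ_p(j))$, so that $\fz$ specializes to a $\ZZ_p$-basis $z_{K,S}$ of $\det_{\ZZ_p}^{-1}(\rgamma_c(\cO_{K,S},\ZZ_p(j)))$. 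Next, for $K$ totally real and $j>1$ odd one has $Y_K(j)=0$, $r_j=r_\CC=0$ and $H^0(\cO_{K,S},\QQ_p(1-j))=0$; combined with the hypothesis $H^2(\cO_{K,S},\QQ_p(1-j))=0$ this makes $\rgamma(\cO_{K,S},\QQ_p(1-j))$ acyclic, and the isomorphism used in Definition \ref{gse def} to define $\eta_{K,S}(j)$ collapses to a canonical trivialization $\psi: \QQ_p\det_{\ZZ_p}^{-1}(\rgamma_c(\cO_{K,S},\ZZ_p(j))) \xrightarrow{\sim} \QQ_p$ obtained by composing (\ref{cisom}) (no $Y_K$-factor since $Y_K(j)=0$) with the acyclicity trivialization of $\rgamma(\cO_{K,S},\QQ_p(1-j))$.

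The heart of the matter is the identity $\psi(z_{K,S}) = \chi_{\rm cyc}^{1-j}(\cL)$. To establish it I would lift (\ref{cisom}) to the Iwasawa level by applying Artin-Verdier duality to $\Lambda^\#(1)$, obtaining a canonical isomorphism $\det_\Lambda^{-1}(\rgamma_c(\cO_{K,S},\Lambda^\#(1))) \simeq \det_\Lambda^{-1}(\rgamma(\cO_{K,S},\Lambda^\#))^\#$. Under this isomorphism the inclusion of the left-hand side into $Q(\Lambda)$, which sends $\fz \mapsto \cL$ and is induced by the acyclicity of $\rgamma_c(\cO_{K,S},\Lambda^\#(1)) \lotimes_\Lambda Q(\Lambda)$, corresponds to the analogous inclusion for $\rgamma(\cO_{K,S},\Lambda^\#)$; derived specialization at $\chi_{\rm cyc}^{1-j}$ then transports this trivialization to the acyclicity trivialization of $\rgamma(\cO_{K,S},\QQ_p(1-j))$, which (after retracing (\ref{cisom})) is precisely $\psi$. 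Hence $\psi(z_{K,S}) = \chi_{\rm cyc}^{1-j}(\cL)$.

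Combining the compatibility with Conjecture \ref{pBC} and Definition \ref{gse def} gives $\psi(z_{K,S}) = \chi_{\rm cyc}^{1-j}(\cL) = \eta_{K,S}(j) = \psi(\vartheta_{\ZZ_p(j)}^{-1}(\zeta_{K,S}^\ast(j)))$. Since $\psi$ is an isomorphism, this forces $z_{K,S} = \vartheta_{\ZZ_p(j)}^{-1}(\zeta_{K,S}^\ast(j))$ in $\CC_p\det_{\ZZ_p}^{-1}(\rgamma_c(\cO_{K,S},\ZZ_p(j)))$, so $\vartheta_{\ZZ_p(j)}(z_{K,S}) = \zeta_{K,S}^\ast(j)$, which is exactly ${\rm TNC}_p(h^0(K)(j))$. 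The main technical obstacle is the compatibility claim of the second paragraph: verifying that Iwasawa-level Artin-Verdier duality together with its trivialization by $\cL$ specialize compatibly at $\chi_{\rm cyc}^{1-j}$ to $\psi$ requires a careful interplay of derived base change, determinant functoriality, and Artin-Verdier duality.
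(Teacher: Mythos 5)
Your proof is correct and, as far as one can tell, reconstructs exactly the argument the authors intend — the paper gives no proof for this proposition beyond the remark that it ``follows immediately from Theorem \ref{IMC}''. The outline you give (specialize $\fz$ at $\chi_{\rm cyc}^{1-j}$ to obtain a $\ZZ_p$-basis $z_{K,S}$ of ${\det}_{\ZZ_p}^{-1}(\rgamma_c(\cO_{K,S},\ZZ_p(j)))$, observe that when $K$ is totally real and $j>1$ is odd one has $Y_K(j)=0$, $r_j=0$, $H^0(\cO_{K,S},\QQ_p(1-j))=0$, so that the vanishing of $H^2(\cO_{K,S},\QQ_p(1-j))$ forces acyclicity, and then compare the two trivializations) is exactly the right approach.

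One remark on your second paragraph: you build the compatibility $\psi(z_{K,S}) = \chi_{\rm cyc}^{1-j}(\cL)$ by lifting the Artin--Verdier isomorphism to the Iwasawa level and descending. This works, but it is more machinery than is needed. Since ${\det}$ of an acyclic perfect complex carries a canonical trivialization that is functorial for quasi-isomorphisms (and compatible with duals and shifts), the $\QQ_p$-extension of (\ref{cisom}) automatically intertwines the acyclicity trivialization of ${\det}^{-1}_{\QQ_p}(\rgamma_c(\cO_{K,S},\QQ_p(j)))$ with that of ${\det}^{-1}_{\QQ_p}(\rgamma(\cO_{K,S},\QQ_p(1-j)))$, with no Iwasawa-level bookkeeping required; the identity $\psi(z_{K,S}) = \chi^{1-j}_{\rm cyc}(\cL)$ is then a direct consequence of flat base change for determinants applied to the trivialization over $Q(\Lambda)$ from Theorem \ref{IMC}. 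Your route is valid, but the ``careful interplay'' you flag as the main technical obstacle is really just the standard naturality of the determinant functor, and treating it as delicate obscures why the authors regard the result as immediate.
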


\begin{remark} A result of the same form as
Proposition \ref{cor pbc} has also recently been proved, and in a more general context, by Nickel in \cite{nickel}.
\end{remark}

\section{The functional equation of vertical determinantal systems}\label{fe vs}

In this section we shall further develop the theory of `vertical determinantal systems' that was introduced in \cite{sbA}. In particular, the context of the results proved here is much more general than those in earlier sections.

Then, in \S\ref{construct section}, a special case of the main result (Theorem \ref{funceq}) of this section will be combined with Theorems \ref{colint} and \ref{IMC} in order to construct a higher rank Euler system with canonical interpolation properties.

\subsection{Local vertical determinantal sytems} In this subsection we specify the notation and hypotheses that are to be used throughout \S\ref{fe vs} and then present a natural `local' analogue of a key definition from \cite{sbA}.

\subsubsection{}Let $K$ be a number field. We fix a representation $T$ of $G_K$ that is unramified outside a finite set $S_{\rm ram}(T)$ of places of $K$ and assume that $T$ is a free module with respect to an action of a commutative Gorenstein $\ZZ_p$-order $\mathcal{R}$ (that commutes with the given action of $G_K$).

We assume that $\cR$ is endowed with an involution and, for any profinite group $\cG$ and an $\cR[[\cG]]$-module $X$, we write $X^\#$ for the set $X$ on which $\cR[[\cG]]$ acts via involution.


We also fix an abelian extension $\cK$ of $K$ (that is not necessarily a pro-$p$ extension) and write $\Omega(\cK)$ for the collection of finite  extensions of $K$ in $\cK$.

For each $F$ in $\Omega(\cK)$ we set 
$$\cG_F := \Gal(F/K)$$
and write $T_F$ for the induced representation ${\rm Ind}_{G_F}^{G_K}(T)\simeq T\otimes_{\ZZ_p}\ZZ_p[\cG_F]^\#$.

We also consider the finite sets of places of $K$ that are defined by
\[ S  := S_\infty(K)\cup S_p(K)\cup S_{\rm ram}(T)\]
and
\[ S(F) := S \cup S_{\rm ram}(F/K), \]
where $S_{\rm ram}(F/K)$ denotes the (finite) set of places of $K$ that ramify in $F$.

For each set of places $\Sigma$ of $K$ we also write $\Sigma_f$ for its subset $\Sigma \setminus S_\infty(K)$ of finite places.
%


\subsubsection{}We recall that the module of vertical determinantal systems for the pair $(T,\cK)$ is defined as an inverse limit
$${\rm VS}(T,\cK):=\varprojlim_{F \in \Omega(\cK)} {\det}_{\cR[\cG_F]}^{-1}(\rgamma_c(\cO_{F,S(F)}, T^\ast(1))),$$
where the transition morphisms are specified in \cite[Def. 2.9]{sbA}  
and will be recalled in the proof of Theorem \ref{funceq} below.


In the following definition we present a natural `local' analogue of this construction. 

\begin{definition}\label{lvs def}
We define the module of {\it local vertical determinantal systems} for $(T,\cK)$ by setting
$${\rm VS}^{\rm loc}(T,\cK):=\varprojlim_{F \in \Omega(\cK)}\left(\bigotimes_{v \in S_f} {\det}^{-1}_{\cR[\cG_F]} (\rgamma(K_v, T_F))\right) \otimes_{\cR[\cG_F]} {\det}_{\cR[\cG_F]}^{-1}\left( \bigoplus_{F \hookrightarrow \CC} T\right).$$
Here the limit is defined with respect to the transition maps for each  $F$ and $F'$ in $\Omega(\cK)$ with $F \subset F'$ that are induced by the natural isomorphisms
\[ \cR[\cG_F]\otimes^{\DL}_{\cR[\cG_{F'}]}\rgamma(K_v, T_{F'})\simeq \rgamma(K_v, T_F)\]
in $D^{\rm perf}(\cR[\cG_F])$ for each $v$ in $S_f$ together with the natural isomorphism of $\cR[\cG_F]$-modules
\[\cR[\cG_F]\otimes_{\cR[\cG_{F'}]}\left(\bigoplus_{F' \hookrightarrow \CC} T\right) \simeq \bigoplus_{F \hookrightarrow \CC} T.\]

\end{definition}

\subsection{The functional equation} We can now state our main result concerning vertical determinant systems.

This result can be interpreted as showing that local vertical determinant systems give rise to  `functional equation' relations between  the vertical determinant systems that are associated to a representation and to its Kummer dual.

\begin{theorem}\label{funceq}
There exists a canonical isomorphism of $\cR[[\Gal(\cK/K)]]$-modules
$${\rm VS}^{\rm loc} (T,\cK) \simeq \Hom_{\cR[[\Gal(\cK/K)]]} \left( {\rm VS}(T^\ast(1),\cK), {\rm VS}(T,\cK)^\# \right).$$
\end{theorem}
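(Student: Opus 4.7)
Since ${\rm VS}(T,\cK)$ and ${\rm VS}(T^\ast(1),\cK)$ are inverse limits of determinants of perfect complexes, they are invertible modules over $\Lambda_\cK := \cR[[\Gal(\cK/K)]]$. The Hom on the right-hand side is thus canonically isomorphic to ${\rm VS}(T,\cK)^\# \otimes_{\Lambda_\cK} {\rm VS}(T^\ast(1),\cK)^{-1}$, and the theorem reduces to constructing, compatibly in $F \in \Omega(\cK)$, a canonical isomorphism of invertible $\cR[\cG_F]$-modules
\begin{equation*}
{\det}^{-1}(\rgamma_c(\cO_{F,S(F)}, T^\ast(1)_F))^\# \otimes {\det}(\rgamma_c(\cO_{F,S(F)}, T_F)) \simeq \bigotimes_{v \in S_f} {\det}^{-1}(\rgamma(K_v, T_F)) \otimes {\det}^{-1}\Bigl(\bigoplus_{F\hookrightarrow\CC} T\Bigr).
\end{equation*}

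At each level $F$, I would combine two classical inputs. The first is the Artin--Verdier duality triangle, applied separately to $T_F$ and to $T^\ast(1)_F$; this expresses each of ${\det}^{-1}(\rgamma_c(T_F))$ and ${\det}^{-1}(\rgamma_c(T^\ast(1)_F))^\#$ as a global cohomology determinant of the Kummer dual, up to an archimedean correction factor $\det(Y_F(T))$ or $\det(Y_F(T^\ast(1)))$ respectively, where $Y_F(M) := \bigoplus_{w \in S_\infty(F)} H^0(F_w, M)$; the $\#$-twist arises naturally from the transposition of the $\cG_F$-action under $\ZZ_p$-linear duality. The second input is the Poitou--Tate defining triangle
\begin{equation*}
\rgamma_c(\cO_{F,S(F)}, T_F) \to \rgamma(\cO_{F,S(F)}, T_F) \to \bigoplus_{v \in S(F)_f} \rgamma(K_v, T_F),
\end{equation*}
which for odd $p$ is a genuine distinguished triangle since the archimedean Tate cohomology is $2$-torsion and hence vanishes. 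Taking determinants and combining the two triangles, the global cohomology factors supplied by Artin--Verdier cancel against the global term of the Poitou--Tate triangle, producing a canonical identification of ${\det}^{-1}(\rgamma_c(T^\ast(1)_F))^\# \otimes {\det}(\rgamma_c(T_F))$ with $\bigotimes_{v \in S(F)_f}{\det}^{-1}(\rgamma(K_v,T_F))\otimes {\det}^{-1}(Y_F(T) \oplus Y_F(T^\ast(1)))$.

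Two further reductions then complete the level-$F$ construction. First, since $p$ is odd, the diagonal complex-conjugation action on the archimedean places and on the coefficients yields a canonical eigenspace decomposition and thereby an $\cR[\cG_F]$-equivariant identification $Y_F(T) \oplus Y_F(T^\ast(1)) \simeq \bigoplus_{F\hookrightarrow\CC} T$, with the first summand identified as the plus eigenspace and the second as the minus eigenspace. Second, for each $v \in S(F)_f \setminus S_f$, the representation $T$ is unramified at $v$, so that $\rgamma(K_v, T_F)$ is quasi-isomorphic to the two-term complex $[T_F^{I_v} \xrightarrow{1-{\rm Fr}_v} T_F^{I_v}]$ concentrated in degrees $0$ and $1$; the identity between its two terms canonically trivializes ${\det}^{-1}(\rgamma(K_v, T_F))$, allowing these factors to be removed from the tensor product. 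Together these reductions produce the displayed level-$F$ isomorphism.

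Compatibility with the transition morphisms defining all three inverse limits follows formally from the naturality of the Artin--Verdier and Poitou--Tate triangles under the base-change quasi-isomorphisms $\cR[\cG_F]\lotimes_{\cR[\cG_{F'}]}\rgamma_c(\cO_{F',S(F')},-)\simeq \rgamma_c(\cO_{F,S(F)},-)$ and analogously for $\rgamma$ and for each $\rgamma(K_v,-)$, for $F \subset F'$ in $\Omega(\cK)$, together with the compatibility of the Frobenius trivializations at unramified places with the enlargement of $S(F)$. The main anticipated obstacle is the careful bookkeeping of the involution $(-)^\#$ throughout this chain of identifications, since Artin--Verdier duality introduces such twists at every stage and they must interact correctly with the $\#$ on the target module ${\rm VS}(T,\cK)^\#$; relatedly, checking that the archimedean splitting $\bigoplus_{F\hookrightarrow\CC} T \simeq Y_F(T)\oplus Y_F(T^\ast(1))$ is itself compatible with the enlargement $F \subset F'$ — so as to yield a single canonical isomorphism in the limit, rather than merely a compatible family up to sign — is the most delicate step of the argument.
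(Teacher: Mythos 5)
There is a genuine gap in your proposed trivialization of the local determinants at the places $v \in S(F)_f \setminus S_f$. You claim that, because $T$ is unramified at $v$, the complex $\rgamma(K_v, T_F)$ is quasi-isomorphic to the two-term complex $[T_F^{I_v} \xrightarrow{1-{\rm Fr}_v} T_F^{I_v}]$ and can therefore be trivialized by the ``identity between its two terms''. This is false: that two-term complex represents the residue-field cohomology $\rgamma(\kappa_v, T_F)$, not the full local Galois cohomology $\rgamma(K_v, T_F)$. The latter has a non-trivial $H^2$ (dual, by local duality, to $H^0(K_v, T(-1)_F)$), and the two complexes are related by the distinguished triangle $\rgamma(\kappa_v, T_F) \to \rgamma(K_v, T_F) \to \rgamma(\kappa_v, T(-1)_F)[-1]$, not by a quasi-isomorphism. (Moreover, although $T$ itself is unramified at $v$, the induced module $T_F$ is ramified there precisely because $v$ ramifies in $F/K$, so even the residue-field complex involves the invariants $T_F^{I_v}$, which are generally a proper submodule of $T_F$.) The paper's trivialization $\mu_v(T_F)$ of ${\det}_F^{-1}(\rgamma(K_v, T_F))$ is obtained by a very different mechanism: one passes to the unramified Iwasawa-theoretic module $(T_F)_w^\infty$, uses the triangle $\rgamma(K_v, (T_F)_w^\infty) \xrightarrow{1-\phi_v} \rgamma(K_v, (T_F)_w^\infty) \to \rgamma(K_v, T_F)$ to make the determinant cancel, and verifies that these Iwasawa complexes are perfect (a non-trivial point requiring the argument of Lemma \ref{technical lemma}).

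This also means the compatibility with transition morphisms cannot be dismissed as formal naturality. The transition maps for ${\rm VS}(T,\cK)$ are defined using the ``naive'' evaluation trivialization $\epsilon_v$ of $\rgamma(\kappa_v, T_F)$, whereas the level-$F$ isomorphism coming from Artin--Verdier duality requires the Iwasawa-theoretic trivialization $\mu_v$ of the full local complex $\rgamma(K_v, T_F)$. The substance of the paper's proof is precisely the verification that these two trivializations interact correctly: it constructs a three-by-three commutative diagram of exact triangles built from the Iwasawa complexes, the residue-field complexes and the local duality triangle, reduces to comparing $\mu'_v(M_F)$ with $\epsilon_v(M_F)$ for $M \in \{T, T(-1)\}$, and completes that comparison by first replacing $K_v$ by a suitable finite unramified extension of degree prime to $p$ so that Frobenius acts unipotently modulo the radical of $\cR$, at which point $\rgamma(\kappa_v, (M_F)_w^\infty)$ becomes explicitly computable. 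None of this is captured by an appeal to naturality alone, and you flag the $\#$-bookkeeping and archimedean splitting as the ``most delicate step'' when in fact the genuine difficulty lies in this unramified local comparison. (Your archimedean claim that $Y_F(T) \oplus Y_F(T^\ast(1))$ is canonically the direct sum $\bigoplus_{F\hookrightarrow\CC} T$ via an eigenspace decomposition is also suspect as a module isomorphism, since the duality pairing sends $T^{c_w=-1}$ to $\Hom(T^\ast(1)^{c_w=1}, \ZZ_p(1))$ rather than to $T^\ast(1)^{c_w=1}$ itself; in the paper this is avoided by citing the already-packaged isomorphism from Burns--Flach.)
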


The proof of this result will occupy the rest of this section.

\subsubsection{}We first fix some convenient notation for the argument.

For each $F$ in $\Omega(\cK)$ we abbreviate the determinant functor ${\rm det}_{\cR[\cG_F]}(-)$ to ${\rm d}_F(-)$. We then also set
\[\Xi_F(T) := {\rm d}_{F}^{-1}(\rgamma_c(\cO_{F,S(F)}, T^\ast(1)))\]
and
\[ \Xi^{\rm loc}_F(T) := \left(\bigotimes_{v \in S_f} {\rm d}^{-1}_{F} (\rgamma(K_v, T_F))\right) \otimes_{\cR[\cG_F]} {\rm d}_{F}^{-1}\left( \bigoplus_{F \hookrightarrow \CC} T\right).\]
%

For each place $v$ of $K$ outside $S$ we fix a place $w$ of $F$ above $v$ and write $F_w^\infty$ and $K_v^\infty$ for the unramified $\ZZ_p$-extensions of $F_w$ and $K_v$ respectively. We denote the finite group $ \Gal(F_w^\infty/K_v^\infty)$ by $\Delta_w$, set $\Gamma_w := \Gal(F_w^\infty/K_v)$ and note that the quotient group $\Gamma_v := \Gamma_w/\Delta_w\simeq \Gal(K_v^\infty/K_v)$ is generated by the restriction of the inverse 
$$\phi_v:={\rm Fr}_v^{-1}$$
of the Frobenius automorphism at $v$.

We write $\Lambda_w^\infty$ for the completed group algebra
$\ZZ_p[[\Gamma_w]]$. Then for any $\Lambda_w^\infty$-module $M$ the completed tensor product $M_w^\infty := \Lambda_w^\infty\hat\otimes_{\ZZ_p[\Delta_w]}M$ is a $\Lambda_w^\infty$-module via left multiplication upon which $\phi_v$ induces a well-defined endomorphism (that we also denote by $\phi_v$) that sends each element $x\otimes m$ to $x\hat\phi_v^{-1}\otimes \hat\phi_v(m)$, where $\hat\phi_v$ is any element of $\Gamma_w$ whose projection to $\Gamma_v$ coincides with that of $\phi_v$.

In particular, since $T$ is unramified at $v$ the restriction of $T_F$ to $G_{K_v}$ is a $\Lambda_w^\infty$-module and, in this case, there exists a natural short exact sequence of $\cR[\cG_F][[G_{K_v}]]$-modules
\[ 0 \to (T_F)_w^\infty \xrightarrow{ 1- \phi_v} (T_F)_w^\infty \to T_F\to 0\]
(for a proof of exactness see, for example, \cite[Prop. 2.2, Rem. 2.3]{sv}) and hence a canonical exact triangle in $D(\cR[\cG_F])$ of the form
\begin{equation}\label{unram tri}  \rgamma(K_v, (T_F)_w^\infty) \xrightarrow{1-\phi_v} \rgamma(K_v, (T_F)_w^\infty) \to \rgamma(K_v, T_F) \to .  \end{equation}
By Lemma \ref{technical lemma} below, this triangle belongs to the subcategory $D^{\rm perf}(\cR[\cG_F])$ of $D(\cR[\cG_F])$ and so induces a composite isomorphism of $\cR[\cG_F]$-modules
\begin{align*} \mu_v(T_F): {\rm d}_{F} (\rgamma(K_v, T_F)) \simeq\, &{\rm d}_{F}(\rgamma(K_v, (T_F)_w^\infty))\otimes_{\cR[\cG_F]}{\rm d}_{F}^{-1}  (\rgamma(K_v, (T_F)_w^\infty))\\
 \simeq\, &\cR[\cG_F],\end{align*}
in which the second isomorphism is the evaluation map on ${\rm d}_{F} (\rgamma(K_v, (T_F)_w^\infty))$.

\subsubsection{}Turning now to the construction of the claimed isomorphism, we recall that the Artin-Verdier Duality Theorem induces (via, for example, \cite[\S5, Lem. 14]{BFetnc}) a canonical isomorphism of $\cR[\cG_F]$-modules
\[ \theta^{\rm AV}(T_F): \Xi_F^{\rm loc}(T)\otimes_{\cR[\cG_F]}\bigotimes_{v \in S(F)\setminus S} {\rm d}_{F}^{-1} (\rgamma(K_v,T_F))\simeq \Xi_F(T^*(1))^{-1}\otimes_{\cR[\cG_F]}\Xi_F(T)^\#.\]

\noindent{}We can therefore define a canonical composite isomorphism
\begin{align*} \theta(T_F): \Xi_F^{\rm loc}(T) \simeq\, &\Xi_F^{\rm loc}(T)\otimes_{\cR[\cG_F]}\bigotimes_{v \in S(F)\setminus S} {\rm d}_{F}^{-1} (\rgamma(K_v,T_F))\\
\simeq\, &\Xi_F(T^*(1))^{-1}\otimes_{\cR[\cG_F]}\Xi_F(T)^\#\\
\simeq\, &\Hom_{\cR[\cG_F]}(\Xi_F(T^*(1)), \Xi_F(T)^\#),\end{align*}
in which the first map is induced by the tensor product over $v$ in $ S(F)\setminus S$ of the isomorphisms $\mu_v(T_F)$, the second is $\theta^{\rm AV}(T_F)$ and the third is the natural isomorphism.

To construct an isomorphism of the required sort it is therefore enough to prove that for each pair of fields $F$ and $F'$ in $\Omega(\cK)$ with $F\subset F'$ the following diagram commutes
\begin{equation}\label{needed diagram} \begin{CD} \Xi_{F'}^{\rm loc}(T) @> \theta(T_{F'}) >> \Hom_{\cR[\cG_{F'}]}(\Xi_{F'}(T^*(1)), \Xi_{F'}(T)^\#)\\
@V VV @V VV\\
\Xi_{F}^{\rm loc}(T) @> \theta(T_{F}) >> \Hom_{\cR[\cG_F]}(\Xi_F(T^*(1)), \Xi_F(T)^\#).
\end{CD}\end{equation}
Here the left hand vertical arrow is the transition morphism specified in Definition \ref{lvs def} and the right hand vertical arrow is the composite
\begin{align*} &\Hom_{\cR[\cG_{F'}]}(\Xi_{F'}(T^*(1)), \Xi_{F'}(T)^\#) \\ \to \, &\cR[\cG_{F}]\otimes_{\cR[\cG_{F'}]}\Hom_{\cR[\cG_{F'}]}(\Xi_{F'}(T^*(1)), \Xi_{F'}(T)^\#)\\
\simeq \, &\Hom_{\cR[\cG_{F}]}(\cR[\cG_{F}]\otimes_{\cR[\cG_{F'}]}\Xi_{F'}(T^*(1)), \cR[\cG_{F}]\otimes_{\cR[\cG_{F'}]}\Xi_{F'}(T)^\#)\\
\simeq \, &\Hom_{\cR[\cG_{F}]}(\Xi_{F}(T^*(1)), \Xi_{F}(T)^\#)\end{align*}

\noindent{}in which the first map is the natural projection, the second is the natural isomorphism resulting from the fact that $\Xi_{F'}(T^*(1))$ and $\Xi_{F'}(T)$ are both free $\cR[\cG_{F'}]$-modules of rank one and the third is induced by the isomorphism (for both $T$ and $T^*(1)$)
\[ \cR[\cG_{F}]\otimes_{\cR[\cG_{F'}]}\Xi_{F'}(T)\simeq \Xi_F(T)\]
that is induced by the transition morphism $\theta_{F'/F}(T):\Xi_{F'}(T) \to  \Xi_{F}(T)$ involved in the definition of ${\rm VS}(T,\cK)$.

In order to recall the definition of $\theta_{F'/F}(T)$, for each place $v$ of $K$ outside $S(F)$ we write $\kappa_v$ for its residue field, with separable closure $\overline \kappa_v$, and observe that the definition of $S(F)$ ensures the action of $G_{K_v}$ on $T_F$ factors through the restriction map $G_{K_v}\to \Gal(\overline \kappa_v/\kappa_v)$. In addition, the  complex $\rgamma(\kappa_v,T_F):= \rgamma(\Gal(\overline \kappa_v/\kappa_v),T_F)$ is represented by $T_F \xrightarrow{1-\phi_v} T_F$, where the first term is placed in degree zero and so the evaluation map on ${\rm d}_{F} (T_F)$ induces a canonical isomorphism of $\cR[\cG_F]$-modules
\[ \epsilon_v(T_F): {\rm d}_{F} (\rgamma(\kappa_v,T_F)) \simeq {\rm d}_{F} (T_F)\otimes_{\cR[\cG_F]}{\rm d}_{F}^{-1} (T_F) \simeq \cR[\cG_F].\]
%

Then, in terms of this notation, the transition map $\theta_{F'/F}(T^\ast(1))$ is defined (in \cite[\S2.4]{sbA}) to be the composite surjective homomorphism of $\cR[\cG_F]$-modules

\begin{eqnarray}
 {\rm d}_{F'}^{-1}(\rgamma_c(\cO_{F',S(F')}, T))  &\to&  {\rm d}_{F}^{-1}(\rgamma_c(\cO_{F,S(F')}, T)) \nonumber\\
 &\simeq&  {\rm d}_{F}^{-1}(\rgamma_c(\cO_{F,S(F)}, T)) \otimes \bigotimes_{v\in S(F')\setminus S(F)} {\rm d}_{F}(\rgamma(\kappa_v, T_F)) \nonumber\\
 &\simeq&  {\rm d}_{F}^{-1}(\rgamma_c(\cO_{F,S(F)}, T)). \nonumber
 \end{eqnarray}
Here the first map is induced by the canonical descent isomorphism in  $D^{\rm perf}(\cR[\cG_F])$
\[ \cR[\cG_F]\otimes^{\DL}_{\cR[\cG_{F'}]}\rgamma_c(\cO_{F',S(F')}, T) \simeq \rgamma_c(\cO_{F,S(F')}, T),\]
the second by the canonical exact triangle in $D^{\rm perf}(\cR[\cG_F])$
 \begin{equation*}\label{compact scalar change} \rgamma_{c}(\mathcal{O}_{F, S(F')},T) \to \rgamma_{c}(\mathcal{O}_{F,S(F)},T) \to \bigoplus_{v \in S(F')\setminus S(F)} \rgamma(\kappa_v, T_F)\to \end{equation*}
 (cf. \cite[Chap. II, Prop. 2.3d)]{med}) and the third by the isomorphisms $\epsilon_v(T_F)$ for each $v$ in $S(F')\setminus S(F)$. (To obtain the definition of $\theta_{F'/F}(T)$ one need only replace $T$ by $T^\ast(1)$ in this description.)

\subsubsection{}Now, since the Artin-Verdier Duality Theorem behaves functorially with respect to change of fields, the precise difference between the isomorphisms $\cR[\cG_F]\otimes_{\cR[\cG_{F'}]}\theta(T_{F'})$ and $\theta(T_F)$ is that the former involves $\mu_v(T_F)$ for each place $v$ in
\[ (S(F')\setminus S)\setminus ( S(F)\setminus S) = S(F')\setminus S(F) = S_{\rm ram}(F'/K)\setminus S(F)\]
whereas these maps do not occur in the definition of $\theta(T_F)$. In addition, the transition maps
$\theta_{F'/F}(T)$ and $\theta_{F'/F}(T^*(1))$ that occur in the right hand vertical arrow of (\ref{needed diagram}) involve the isomorphisms $\epsilon_v(T_F)$ and $\epsilon_v(T_F^*(1))$ for each such $v$, whilst these maps do not occur in the left hand vertical arrow of the diagram.


To proceed we write $T^\dagger$ for the representation $(T^*(1))^\ast \simeq T(-1)$ and note that for any place $v$ outside $S(F)$ the complex $\rgamma(\kappa_v,T^*(1)_F)^*[-2]$ is naturally isomorphic to $\rgamma(\kappa_v,T^\dagger_F)[-1]$. The local duality theorem therefore gives rise to a canonical exact triangle in $D^{\rm perf}(\cR[\cG_F])$ of the form
\begin{equation}\label{local duality} \rgamma(\kappa_v,T_F) \to \rgamma(K_v,T_F) \to \rgamma(\kappa_v,T^\dagger_F)[-1] \to .\end{equation}
The above observations then imply that the commutativity of (\ref{needed diagram}) will follow if for every $v$ in  $S(F')\setminus S(F)$ the following diagram of isomorphisms commutes
\begin{equation}\label{needed diagram2}\begin{CD}
{\rm d}_{F} (\rgamma(K_v, T_F)) @> \Delta_v(T_F) >> {\rm d}_{F} (\rgamma(\kappa_v, T_F))\otimes_{\cR[\cG_F]}{\rm d}_{F}^{-1} (\rgamma(\kappa_v, T^\dagger_F))\\
@V \mu_v(T_F)VV @VV \epsilon_v(T_F)\otimes \epsilon_v(T^\dagger_F)^{-1}V\\
\cR[\cG_F] @=  \cR[\cG_F]\otimes_{\cR[\cG_F]}\cR[\cG_F], \end{CD}\end{equation}
where $\Delta_v(T_F)$ is induced by the triangle (\ref{local duality}).

%
%

To verify this we shall use the following commutative diagram of exact triangles

\begin{equation}\label{needed diagram3}\begin{CD} \rgamma(\kappa_v, (T_F)_w^\infty) @> 1-\phi_v>>  \rgamma(\kappa_v, (T_F)_w^\infty) @> >> \rgamma(\kappa_v, T_F) @> >> \\
@V VV @V VV @V VV\\
\rgamma(K_v, (T_F)_w^\infty) @> 1-\phi_v >> \rgamma(K_v, (T_F)_w^\infty) @> >>  \rgamma(K_v, T_F) @> >> \\
@V VV @V VV @V VV \\
 \rgamma(\kappa_v, (T^\dagger_F)_w^\infty)[-1] @> 1-\phi_v >> \rgamma(\kappa_v, (T^\dagger_F)_w^\infty)[-1]  @> >> \rgamma(\kappa_v, T^\dagger_F)[-1]  @> >>  \\
@V VV @V VV @V VV\end{CD}\end{equation}
Here the second row is (\ref{unram tri}) and the first and third rows are constructed similarly. In addition, the third column is (\ref{local duality}) and the first and second columns are constructed in the same way. Finally, we note that the same argument as in Lemma \ref{technical lemma} below shows that the complexes $\rgamma(\kappa_v, (T_F)_w^\infty)$ and $\rgamma(\kappa_v, (T^\dagger_F)_w^\infty)$ belong to $D^{\rm perf}(\cR[\cG_F])$ and so therefore does the entire diagram.

The diagram (\ref{needed diagram3}) thus gives rise to a commutative diagram of $\cR[\cG_F]$-modules
\[\begin{CD}
{\rm d}_{F} (\rgamma(K_v, T_F)) @> \Delta_v(T_F) >> {\rm d}_{F} (\rgamma(\kappa_v, T_F))\otimes_{\cR[\cG_F]}{\rm d}_{F}^{-1} (\rgamma(\kappa_v, T^\dagger_F))\\
@V \mu_v(T_F)VV @VV \mu'_v(T_F)\otimes \mu'_v(T^\dagger_F)^{-1}V\\
\cR[\cG_F] @=  \cR[\cG_F]\otimes_{\cR[\cG_F]}\cR[\cG_F], \end{CD}\]
in which $ \mu'_v(T_F)$ denotes the isomorphism of $\cR[\cG_F]$-modules ${\rm d}_{F} (\rgamma(\kappa_v, T_F)) \simeq \cR[\cG_F]$  that is defined in the same way as $\mu_v(T)$ but with the role of the triangle (\ref{unram tri}) now played by the first  row of (\ref{needed diagram3}), and $\mu'_v(T^\dagger_F)$ is defined similarly using the third row of (\ref{needed diagram3}). To deduce the commutativity of (\ref{needed diagram2}) it is therefore enough to show
that, if $M$ denotes either $T$ or $T^\dagger$, then there is an equality of maps $\mu_v'(M_F) = \epsilon_v(M_F)$.

If $E_v$ is any finite unramified extension of $K_v$, with residue field $\tilde\kappa$, then it suffices to prove this equality after replacing $M$ by ${\rm Ind}_{G_{E_v}}^{G_{K_v}}(M)$ and $\cR[\cG_F]$ by $\cR[\cG_F\times \Gal(E_v/K_v)]$. Hence, by  choosing a suitable $E_v$ with $[E_v:K_v]$ prime to $p$ and applying Shapiro's Lemma, 
 we can (replace $\rgamma(K_v, -), \rgamma(\kappa_v, -)$  and $\phi_v$ by $\rgamma(E_v,-), \rgamma(\tilde\kappa,-)$ and $\phi_v^{[E_v:K_v]}$ and so) assume $\phi_v$ acts unipotently on $M$ modulo the radical of $\cR$. The key point now is that, since the $G_{K_v}$-module
$(M_F)_w^\infty$ is isomorphic to $\ZZ_p[[\Gamma_v]]\otimes_{\ZZ_p}M_F$, the complex $\rgamma(\kappa_v, (M_F)_w^\infty)$ can be computed as the inverse limit (over natural numbers $n$) of the commutative diagrams 
\[ \begin{CD} M_F @> 1-\phi_v^{p^n} >> M_F\\
@V \sum_{i=0}^{i = p-1}(\phi_v^{p^{n-1}})^iVV @\vert\\
M_F @> 1-\phi^{p^{n-1}}_v >> M_F\end{CD}\] 
and so is, under the present hypotheses on $\phi_v$, isomorphic in $D(\cR[\cG_F])$ to the complex that is equal to $M_F$ in degree one and to zero in all other degrees.

 Given this explicit description, the equality $\mu_v'(M_F) = \epsilon_v(M_F)$ then follows from a straightforward  comparison of the definitions of the respective maps.

This completes the proof of Theorem \ref{funceq}.

\begin{lemma}\label{technical lemma} For each $v$ in $S(F)\setminus S$ the complex $\rgamma(K_v, (T_F)_w^\infty)$ belongs to $D^{\rm perf}(\cR[\cG_F])$.\end{lemma}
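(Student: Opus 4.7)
The plan is to apply Shapiro's lemma twice to reduce the claim to a standard local computation at a prime of residue characteristic different from $p$. First, I would identify $(T_F)_w^\infty = \Lambda_w^\infty \hat\otimes_{\ZZ_p[\Delta_w]}T_F$ as the continuous coinduction from $G_{K_v^\infty}$ to $G_{K_v}$ of the restriction $T_F|_{G_{K_v^\infty}}$. The key point is that $\Delta_w = \Gal(F_w^\infty/K_v^\infty)$ is precisely the image of $G_{K_v^\infty} \subset G_{K_v}$ in $\Gamma_w$, so that a standard manipulation of the completed group algebra $\Lambda_w^\infty = \ZZ_p[[\Gamma_w]]$ produces a natural isomorphism of $G_{K_v}$-modules (commuting with the $\cR[\cG_F]$-action) between $(T_F)_w^\infty$ and $\Hom^{\rm cts}_{\ZZ_p[[G_{K_v^\infty}]]}(\ZZ_p[[G_{K_v}]], T_F)$. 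Shapiro's lemma then provides a canonical quasi-isomorphism
$$\rgamma(K_v, (T_F)_w^\infty) \simeq \rgamma(K_v^\infty, T_F) \text{ in } D(\cR[\cG_F]).$$

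Second, I would apply Shapiro's lemma again, this time to the induced representation $T_F = \mathrm{Ind}_{G_F}^{G_K}(T)$, in order to decompose the right-hand side as a direct sum
$$\rgamma(K_v^\infty, T_F) \simeq \bigoplus_{w' \mid v} \rgamma((F\cdot K_v^\infty)_{w'}, T),$$
where $w'$ runs over the (finitely many) places of $F\cdot K_v^\infty$ above $v$. The group $\cG_F$ permutes these summands transitively with stabilizer at each $w'$ equal to the corresponding decomposition subgroup $D_{w'}$, so it suffices to show that each summand is a perfect complex of $\cR$-modules.

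Third, I would observe that $v \notin S$ forces the residue characteristic of $v$ to be different from $p$, and the residue field of the local field $L_{w'} := (F\cdot K_v^\infty)_{w'}$ to equal the algebraic closure of the residue field at $v$. Consequently, the absolute Galois group of $L_{w'}$ has $p$-cohomological dimension equal to one and, since $T$ is unramified at $v$, acts on $T$ through a finite quotient of the tame inertia. Thus $\rgamma(L_{w'}, T)$ is represented by an explicit two-term complex of finitely generated free $\cR$-modules concentrated in degrees $0$ and $1$, and is therefore perfect over $\cR$. Induction from $D_{w'}$ to $\cG_F$ then delivers the required perfection over $\cR[\cG_F]$.

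The main obstacle is the first step: the precise identification of $(T_F)_w^\infty$ as a coinduced module requires careful bookkeeping of the commuting actions of $G_{K_v}$ and $\cR[\cG_F]$ on the completed tensor product, and, in particular, a verification that the resulting continuous Shapiro isomorphism is $\cR[\cG_F]$-linear. Once this is secured, the remaining steps reduce to routine applications of Shapiro's lemma and the classical structure of local Galois cohomology at primes of residue characteristic different from $p$.
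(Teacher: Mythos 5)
Your first step does not work: the completed tensor product $(T_F)_w^\infty = \Lambda_w^\infty\hat\otimes_{\ZZ_p[\Delta_w]}T_F$, equipped with the twisted diagonal Galois action in which $\phi_v$ sends $x\otimes m$ to $x\hat\phi_v^{-1}\otimes\hat\phi_v(m)$, is the \emph{compactly induced} (Iwasawa--theoretic) module, not the continuous coinduction. As a $\ZZ_p$-module it is compact, whereas the coinduction $\Hom^{\rm cts}_{\ZZ_p[[G_{K_v^\infty}]]}(\ZZ_p[[G_{K_v}]],T_F)$ identifies with the (non-compact) space of continuous functions $\Gamma_v\to T_F$, so the two cannot be isomorphic. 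Accordingly the relevant Shapiro-type identification for $(T_F)_w^\infty$ yields $\rgamma(K_v,(T_F)_w^\infty)\simeq\varprojlim_n\rgamma(K_{v,n},T_F)$, the Iwasawa cohomology over the finite layers $K_{v,n}$ of the unramified $\ZZ_p$-tower, and \emph{not} $\rgamma(K_v^\infty,T_F)$. This is not merely cosmetic: your entire strategy rests on the $p$-cohomological dimension of $G_{K_v^\infty}$ being one so that the complex is concentrated in degrees $0$ and $1$, but the Iwasawa cohomology one actually has to control here lives in degrees $0,1,2$ (each $H^i(K_{v,n},T)$ for $i\le 2$ contributes to the limit), so this dimension-shift argument is unavailable. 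There are also two smaller inaccuracies: the residue field of the local field you call $L_{w'}$ is only the pro-$p$ layer $\bigcup_n\FF_{q^{p^n}}$ of the residue field at $v$ (with Galois group $\prod_{\ell\ne p}\ZZ_\ell$), not its algebraic closure, and since $T$ is unramified at $v$ inertia acts trivially, so the phrase ``acts on $T$ through a finite quotient of the tame inertia'' is confused; moreover the image of Frobenius in $\Aut_\cR(T)$ need not be finite.

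For comparison, the paper avoids any Shapiro reduction to the top local field. It first reduces, by a standard base-change and boundedness criterion, to showing that the cohomology groups $H^i(F_w,T^\infty)$ are finitely generated over $\ZZ_p$ and vanish for almost all $i$, then identifies $H^i(F_w,T^\infty)\cong\varprojlim_n\varprojlim_m H^i(F_w^n,T/p^m)$ as an honest double inverse limit of finite groups, deduces vanishing for $i>2$ and compactness from the finite-level data, and uses Nakayama's lemma to reduce to the finiteness of $H^i(F_w,(T/p)^\infty)$, which is finally handled by passing to an extension trivializing $T/p\oplus\mu_p$ and appealing to the explicit structure of the Iwasawa cohomology of $\ZZ_p(1)$ over the unramified tower. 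You would need to replace your step one with an argument of this kind, working with the inverse-limit description rather than with cohomology over $K_v^\infty$.
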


\begin{proof} 

The definition of $T_F$ as an induced module implies that for any Galois extension $L$ of $K$ in $F$ there exists a natural isomorphism in $D(\cR[\cG_L])$ of the form
\[ \cR[\cG_{L}]\otimes^{\DL}_{\cR[\cG_F]}\rgamma(K_v, (T_F)_w^\infty)\simeq \rgamma(K_v, (T_L)_w^\infty).\]
By a standard argument (as used, for example, in the proof of \cite[Prop. 1.6.5(i)]{FK}), one is therefore reduced to showing that the cohomology groups of each complex $\rgamma(K_v, (T_L)_v^\infty)$ are finitely generated $\ZZ_p$-modules that vanish in almost all degrees.

Then, by an application of the Hochschild-Serre spectral sequence (relative to the finite extension $F_w/K_v$) it is enough to prove the cohomology groups of $\rgamma(F_w, (T_L)_w^\infty)$ are finitely generated $\ZZ_p$-modules that vanish in almost all degrees. Since the $G_{F_w}$-module $(T_L)_w^\infty$ is isomorphic to the direct sum of a finite number of copies of
$T^\infty := \ZZ_p[[\Gal(F_w^\infty/F_w)]]\otimes_{\ZZ_p}T$ it is therefore enough to prove that the same is true for the complex $\rgamma(F_w, T^\infty)$.

For each natural number $n$ we write $F_w^n$ for the unramified extension of $F_w$ of degree $p^n$. Then in each degree $i$ there is a natural isomorphism of groups 
\[ H^i(F_w, T^\infty)\cong \varprojlim_n\varprojlim_m H^i(F_w^n, T/p^m)\]
where, in the inverse limits, the transition morphisms over $m$ are induced by the projection maps $T/p^m \to T/p^{m-1}$ and the transition morphisms over $n$ by the natural corestriction maps.

In particular, since each module $H^i(F_w^n, T/p^m)$ is finite and (as $p$ is odd) vanishes if $i > 2$, the module $H^i(F_w, T^\infty)$ is compact and vanishes if $i > 2$. Since $H^i(F_w, T^\infty)/p$ embeds into $H^i(F_w, (T/p)^\infty)$, Nakayama's Lemma therefore reduces us to showing that $H^i(F_w, (T/p)^\infty)$ is finite for each $i \le 2$.

To do this we write $E$ for the finite (Galois) extension of $F_w$ that corresponds to the kernel of the action of $G_{F_w}$ on $(T/p)\oplus \ZZ_p(1)/p$ (so that $E$ contains a primitive $p$-th root of unity). Then, just as above, the Hochschild-Serre spectral sequence  
 allows us to replace $F_w$ by $E$ and hence assume both that $E$ contains a primitive $p$-th root of unity and that $T/p$ is equal to $ \ZZ_p(1)/p$. 
 
 It is thus enough to prove that in this case the group $H^i(E, (\ZZ_p(1)/p)^\infty)$ is finite for $i \le 2$. This is obvious if $i = 0$ and in the remaining cases can be deduced from the fact that $H^i(E, \ZZ_p(1)^\infty)$ is canonically isomorphic to $\ZZ_p$ if $i = 2$ and to $\ZZ_p(1)$ if $i =1$ (cf. \cite[Th. 11.2.4(iii)]{NeuSW}).
\end{proof}

\section{Construction of a higher rank Euler system}\label{construct section}

In this section we shall combine the relevant special case of Theorem \ref{funceq} with Theorems \ref{colint} and \ref{IMC} in order to deduce the existence of higher rank Euler systems for $\ZZ_p(1)$ over totally real fields that have canonical interpolation properties (see Theorem \ref{main}).

\subsection{The Euler system}\label{es section}

\subsubsection{}\label{general construct}We use the notations in \S \ref{fe vs}. The key fact we require is that vertical determinantal systems give rise to Euler systems for $(T,\cK)$ if the following hypothesis is satisfied.

\begin{hypothesis} \label{hyp1}\
\begin{itemize}
\item[(i)] $Y_K(T^\ast(1)):= \bigoplus_{v \in S_\infty(K)} H^0(K_v,T^\ast(1))$ is a free $\cR$-module.
\item[(ii)] $H^1(\cO_{F,S(F)},T)$ is $\ZZ_p$-free for every $F \in \Omega(\cK)$.
\item[(iii)] $H^0(F,T)=0$ for every $F \in \Omega(\cK)$.
\item[(iv)] All infinite places of $K$ split completely in $\cK$.
\end{itemize}
\end{hypothesis}

More precisely, if we assume this hypothesis and set
$$ r(T):={\rm rank}_\cR(Y_K(T^\ast(1))),$$
then \cite[Th. 2.18]{sbA} implies the existence of a homomorphism of $\cR[[\Gal(\cK/K)]]$-modules
$$\theta_{T,\cK}: {\rm VS}(T,\cK)^\# \to {\rm ES}_{r(T)}(T,\cK) \subset \prod_{F \in \Omega(\cK)}{\bigcap}^{r(T)}_{\cR[\cG_F]} H^1(\cO_{F,S(F)},T),$$
where ${\rm ES}_{r(T)}(T,\cK)$ is the module of Euler systems of rank $r(T)$ for the pair $(T,\cK)$. (See \cite[\S 6.1]{bss} for the definition. However, note that we do not assume $\cK/K$ is a $p$-extension.) This homomorphism is canonical up to a choice of an ordered basis of $Y_K(T^\ast(1))$.

\subsubsection{}We now fix a totally real field $K$ and specialize to the case $\cR=\ZZ_p$ and $T=\ZZ_p(1)$. We set $r:=[K:\QQ] $. In the following, we set $S:=S_\infty(K)\cup S_p(K)$. We assume that $p$ does not ramify in $K$ and we fix a totally real abelian extension $\cK$ of $K$ that contains $K(\mu_{p^\infty})^+$. We set $G_n:=\Gal(K(\mu_{p^n})^+/K)$ and $\Lambda:=\ZZ_p[[\Gal(K(\mu_{p^\infty})^+/K)]]$. (Note that the notation is slightly different from that in \S \ref{coleman review}.)

We note that Hypothesis \ref{hyp1} is satisfied by the data $ \mathcal{R} = \ZZ_p$ and $T = \ZZ_p(1)$  and with the field $\cK$. Indeed, in this case the stated conditions (iii) and (iv) are obviously satisfied, condition (ii) is satisfied since Kummer theory identifies $H^1(\mathcal{O}_{F,S(F)},\ZZ_p(1))$ with $\mathcal{O}_{F,S(F)}^\times\otimes_\ZZ\ZZ_p$ and this group is torsion-free since $F$ is totally real and $p$ is odd and condition (i) is satisfied since $Y_K(\ZZ_p)$ identifies with the free $\ZZ_p$-module on $S_\infty(K)$ (so that $r(T) = r$ in this case).







For each finite place $v$ of $K$ we write $K(v)$ for the maximal $p$-extension of $K$ inside its ray class field modulo $v$.

For each $F$ in $\Omega(\cK)$ we decompose $\cG_F$ as a direct product $\mathcal{H}_F\times \mathcal{P}_F$ , where $\mathcal{P}_F$ is the Sylow $p$-subgroup of $\cG_F$. For a homomorphism $\chi: \mathcal{H}_F\to \overline \QQ_p^{\times}$ we write $O_\chi$ for the $\ZZ_p$-subalgebra of $\overline \QQ_p$ generated by $\{\chi(h): h \in \mathcal{H}_F\}$ and we use $\chi$ to regard $O_\chi$ as a $\ZZ_p[\mathcal{H}_F]$-module.  For any $\ZZ_p[\cG_F]$-module $M$ we thereby obtain an $O_\chi[\mathcal{P}_F]$-module by setting $M_\chi := O_\chi\otimes_{\ZZ_p[\mathcal{H}_F]}M$.

We write ${\rm Cl}_F$ for the ideal class group of $F$ and also use the  $S(F)$-truncated Selmer module $\mathcal{S}_{S(F)}(\GG_{m/F}) $ of $\GG_{m}$ over $F$, as defined by Kurihara and the present authors in \cite[\S2.1]{bks1}.

For each abelian group $A$ we set $A_p := \ZZ_p\otimes_\ZZ A$.

For any positive even integer $j$, one sees that the $j$-th power of the cyclotomic character $\chi_{\rm cyc}$ induces a map
$$\chi_{\rm cyc}^j: \varprojlim_n {\bigcap}_{\ZZ_p[G_n]}^r H^1(\cO_{K(\mu_{p^n})^+,S}, \ZZ_p(1)) \to {\bigwedge}_{\QQ_p}^r H^1(\cO_{K,S}, \QQ_p(1-j))$$
so that the following diagram commutes:
$$ \small \xymatrix{\displaystyle
 {\rm VS}(\ZZ_p(1) , K(\mu_{p^\infty})^+)^\#=  {\det}_{\Lambda}^{-1}(\rgamma_c(\cO_{K,S}, \Lambda^\#))^\# \ar[r]^{\quad\quad\quad\quad\quad\chi_{\rm cyc}^j} \ar[d]_{\theta_{\ZZ_p(1),K(\mu_{p^\infty})^+}}&\displaystyle {\det}_{\QQ_p}^{-1}(\rgamma_c(\cO_{K,S}, \QQ_p(j))) \ar[d]^{(\ref{cisom}) \text{ and }(\ref{qp})}\\
\displaystyle{\rm ES}_r(\ZZ_p(1), K(\mu_{p^\infty})^+)=\varprojlim_n {\bigcap}_{\ZZ_p[G_n]}^r H^1(\cO_{K(\mu_{p^n})^+,S}, \ZZ_p(1)) \ar[r]_{\quad\quad\quad\quad\quad\quad\quad\chi_{\rm cyc}^j} &\displaystyle{\bigwedge}_{\QQ_p}^r H^1(\cO_{K,S}, \QQ_p(1-j)).
}
$$
Here the upper horizontal arrow is defined in the same way as (\ref{twist map}). (Compare \cite[Cor. 4.9]{tsoi}.) Recall from Definition \ref{gse def} and Remark \ref{even remark} that
$$\eta_{K,S}(j)\in  {\bigwedge}_{\QQ_p}^{r} H^1(\cO_{K,S},\QQ_p(1-j)) $$
denotes the generalized Stark element.

We can now state our main result concerning Euler systems.

\begin{theorem}\label{main}
There exists an Euler system $c=(c_F)_{F \in \Omega(\cK)}$ in ${\rm ES}_r(\ZZ_p(1), \cK)$ that has all of the following properties.
\begin{itemize}
\item[(i)] At every positive even integer $j$ the system $c$ interpolates the special values $\zeta_{K,S}(j)$ in the following sense: the map
$$\chi_{\rm cyc}^j: \varprojlim_n {\bigcap}_{\ZZ_p[G_n]}^r H^1(\cO_{K(\mu_{p^n})^+,S}, \ZZ_p(1)) \to {\bigwedge}_{\QQ_p}^r H^1(\cO_{K,S}, \QQ_p(1-j))$$
sends $(c_{K(\mu_{p^n})^+})_n$ to $\pm \eta_{K,S}(j)$. 
\item[(ii)] For each field $F$ in $\Omega(\cK)$ one has
$$\im (c_F) = {\rm Fitt}_{\ZZ_p[\cG_F]}^0(H^2(\cO_{F,S(F)}, \ZZ_p(1))) = {\rm Fitt}_{\ZZ_p[\cG_F]}^r(\mathcal{S}_{S(F)}(\GG_{m/F})_p^\#).$$
\item[(iii)] Let $F$ be a field in $\Omega(\cK)$ containing the Hilbert $p$-classfield of $K$ and $\chi: \mathcal{H}_F\to \overline \QQ_p^{\times}$ a homomorphism that is not trivial on the decomposition subgroup  of any place in $S(F)_f$. Fix a natural number $n$ and assume that $\cK$ contains $K(v)$ for every place $v$ that splits completely in $F(\mu_{p^n},(\mathcal{O}_K^\times)^{1/p^n})$. Then the $r$-th Kolyvagin derivative of $c$ determines all higher Fitting ideals of the $O_\chi[\mathcal{P}_F]$-module
 $({\rm Cl}_{F}/p^n)_\chi$.
\end{itemize}
\end{theorem}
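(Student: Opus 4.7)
The plan is to combine the functional equation of Theorem \ref{funceq} with the Deligne-Ribet $p$-adic $L$-function of Theorem \ref{IMC} and the interpolation property of the higher rank Coleman map given by Theorem \ref{colint}, and then to apply the general Euler system construction of \cite[Th.~2.18]{sbA}. The Euler system will be obtained by feeding the $p$-adic $L$-function into the functional equation isomorphism and then into the canonical Euler system map.

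I would first check that Hypothesis \ref{hyp1} is satisfied for $(\cR, T, \cK) = (\ZZ_p, \ZZ_p(1), \cK)$, as already observed in \S\ref{es section}; this yields a canonical (up to sign) homomorphism
$$\theta_{\ZZ_p(1), \cK}: {\rm VS}(\ZZ_p(1), \cK)^\# \to {\rm ES}_r(\ZZ_p(1), \cK).$$
Since $\ZZ_p(1)^*(1) = \ZZ_p$, Theorem \ref{funceq} provides a canonical isomorphism
$${\rm VS}^{\rm loc}(\ZZ_p(1), \cK) \simeq \Hom_\Lambda({\rm VS}(\ZZ_p, \cK), {\rm VS}(\ZZ_p(1), \cK)^\#).$$
Modulo the identification of inverse limits with the appropriate determinants over $\Lambda$, the element $\Psi = \sigma_{D_K}\cdot \Phi_{\bm{x}}$ of Remark \ref{modify} lies on the left-hand side, while the Deligne-Ribet basis $\fz$ of Theorem \ref{IMC} lies in ${\rm VS}(\ZZ_p, \cK)$. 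I then set $\fz_\Psi := \Psi(\fz) \in {\rm VS}(\ZZ_p(1), \cK)^\#$ and $c := \theta_{\ZZ_p(1), \cK}(\fz_\Psi)$.

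To verify property (i), I would trace the image of $\fz_\Psi$ under $\chi_{\rm cyc}^j$ through the diagram preceding the theorem statement. By the commutativity of that diagram, it suffices to show that $\chi_{\rm cyc}^j(\fz_\Psi)$ equals $\pm \vartheta_{\ZZ_p(j)}^{-1}(\zeta_{K,S}^\ast(j))$ in $\det_{\QQ_p}^{-1}(\rgamma_c(\cO_{K,S}, \QQ_p(j)))$. The defining interpolation property of $\fz$ says $\chi_{\rm cyc}^j(\fz)$ is the ${\rm TNC}_p(h^0(K)(1-j))$ basis, which under the commutative diagram (\ref{fe}) of Proposition \ref{fe compatible} (applicable because of the already established Theorems \ref{ltnc} and \ref{tnc evidence}(i)) corresponds to the ${\rm TNC}_p(h^0(K)(j))$ basis up to the contribution of $\vartheta_j^{\rm loc}$ evaluated on the image of $\Psi$. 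By the interpolation property of $\Psi$ recorded in Remark \ref{modify}, this contribution is exactly $\pm \zeta_{K,\{p\}}^\ast(1-j)/\zeta_{K,\{p\}}^\ast(j)$, and combining this with the Euler factors relating $\zeta_{K,S}(1-j)$ and $\zeta_{K,\{p\}}^\ast(1-j)$ produces $\pm \zeta_{K,S}^\ast(j)$, as required.

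Properties (ii) and (iii) follow essentially formally: since $\fz_\Psi$ is a $\Lambda$-basis of ${\rm VS}(\ZZ_p(1), \cK)^\#$, a sharper form of \cite[Th.~2.18]{sbA} identifies $\im(c_F)$ with ${\rm Fitt}^0_{\ZZ_p[\cG_F]}(H^2(\cO_{F, S(F)}, \ZZ_p(1)))$, which equals ${\rm Fitt}^r_{\ZZ_p[\cG_F]}(\mathcal{S}_{S(F)}(\GG_{m/F})_p^\#)$ by the Selmer module description in \cite[\S2.1]{bks1}. For (iii), the Kolyvagin derivative formalism of \cite{sbA} applied to the basis $c$ produces derivative classes whose image computes the higher Fitting ideals of $({\rm Cl}_F/p^n)_\chi$; the hypothesis on $\cK$ containing $K(v)$ for the relevant auxiliary primes ensures that enough derivative classes exist. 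The main obstacle is the detailed verification of (i), namely confirming that all the canonical identifications involved (the discriminant factor $D_K^j$ of Theorem \ref{colint}, the period isomorphism (\ref{period}), Artin-Verdier duality, and the $S$ versus $\{p\}$ truncation of the zeta value) combine exactly to $\pm \eta_{K,S}(j)$ rather than merely to a proportional quantity.
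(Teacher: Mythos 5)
Your proposal follows essentially the same strategy as the paper: check Hypothesis \ref{hyp1}, feed the Deligne--Ribet basis $\fz$ from Theorem \ref{IMC} through the functional-equation isomorphism of Theorem \ref{funceq} using the (twisted) Coleman basis as the local determinantal system, and then apply $\theta_{\ZZ_p(1),\cK}$; claims (ii) and (iii) are deduced by citing the basic-Euler-system machinery of \cite{sbA} together with the Selmer-module identifications from \cite{bks1}.

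One technical point you elide and state slightly incorrectly: the element $\fz$ of Theorem \ref{IMC} and the element $\Psi$ built from $\Phi_{\bm x}$ live naturally in ${\rm VS}(\ZZ_p,K(\mu_{p^\infty})^+)$ and ${\rm VS}^{\rm loc}(\ZZ_p,K(\mu_{p^\infty})^+)^\#$ respectively (i.e.\ over the cyclotomic $\Lambda$ only), not directly in the corresponding modules over the larger $\cK$. Since the general $\cK$ in Theorem \ref{main} strictly contains $K(\mu_{p^\infty})^+$, one must first lift $\fz$ and $\Psi$ to basis elements $\widetilde\fz$ and $\widetilde\Psi$ over $\ZZ_p[[\Gal(\cK/K)]]$ via the surjectivity of the projection maps on the (free, rank-one) modules ${\rm VS}(\ZZ_p,\cK)$ and ${\rm VS}^{\rm loc}(\ZZ_p,\cK)^\#$; this is what makes the construction of $c$ well-defined and ensures it is a generator of $\mathcal{E}^{\rm b}(\ZZ_p,\cK)$, which is needed for (ii) and (iii). (A smaller remark: the commutativity of diagram (\ref{fe}) is an unconditional identity that is established inside the proof of Proposition \ref{fe compatible}; it does not require Theorem \ref{ltnc} or Theorem \ref{tnc evidence}(i) as you parenthetically claim.) Neither issue affects the validity of the overall argument.
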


\subsection{The proof of Theorem \ref{main}}
 At the outset we note that 
there are identifications
\[ \varprojlim_n  {\det}_{\ZZ_p[G_n]}^{-1}(\rgamma(K_p(\mu_{p^n})^+,\ZZ_p ))={\rm VS}^{\rm loc}(\ZZ_p, K(\mu_{p^\infty})^+)\]
(cf. \S \ref{labeling}) and
\[ {\det}_{\Lambda}^{-1}(\rgamma_c(\cO_{K,S}, \Lambda^\#(1) )) ={\rm VS}(\ZZ_p, K(\mu_{p^\infty})^+).\]

We may therefore regard the element $\fz$ constructed in Theorem \ref{IMC} as an element of ${\rm VS}(\ZZ_p,K(\mu_{p^\infty})^+)$ and also use the construction (\ref{phi def}) to define an element
$$\Psi:=\sigma_{D_K} \cdot e^+ \Phi_{\bm{x}} \in {\rm VS}^{\rm loc}(\ZZ_p, K(\mu_{p^\infty})^+)^\#.$$
Here $\sigma_{D_K}$ denotes the unique element of $\Gal(K(\mu_{p^\infty})/K)$ such that $\chi_{\rm cyc}(\sigma_{D_K})=D_K$ (such an element exists since $D_K$ belongs to $\ZZ_p^\times$ as $p$ does not ramify in $K$).

We may then choose a $\ZZ_p[[\Gal(\cK/K)]]$-basis element
$$\widetilde \fz \in {\rm VS}(\ZZ_p,\cK)$$
that the natural (surjective) projection map
$${\rm VS}(\ZZ_p, \cK) \twoheadrightarrow {\rm VS}(\ZZ_p ,K(\mu_{p^\infty})^+)$$
sends to $\fz$. Similarly, we choose a $\ZZ_p[[\Gal(\cK/K)]]$-basis element
$$\widetilde {\rm \Psi} \in {\rm VS}^{\rm loc}(\ZZ_p,\cK)^\#$$
that the natural (surjective) projection map
$${\rm VS}^{\rm loc}(\ZZ_p, \cK)^\# \twoheadrightarrow {\rm VS}^{\rm loc}(\ZZ_p ,K(\mu_{p^\infty})^+)^\#$$
sends to $\Psi$.


Then, by Theorem \ref{funceq}, we can regard $\widetilde \Psi$ as a map (which is an isomorphism, since $\widetilde \Psi $ is a basis) of the form
$$\widetilde \Psi :{\rm VS}(\ZZ_p(1),\cK)^\# \xrightarrow{\sim} {\rm VS}(\ZZ_p,\cK).$$

We may therefore use the homomorphism $\theta_{\ZZ_p(1),\cK}$ from \S\ref{general construct} to define an Euler system
$$c :=\theta_{\ZZ_p(1),\cK} \left( \widetilde \Psi^{-1}(\widetilde \fz)\right)$$
in ${\rm ES}_r(\ZZ_p(1), \cK).$

To verify that this system has the interpolation property in claim (i) we set
$$z:=\Psi^{-1}(\fz) \in {\rm VS}(\ZZ_p(1), K(\mu_{p^\infty})^+)^\# = {\det}^{-1}_\Lambda(\rgamma_c(\cO_{K,S},\Lambda^\#))^\#.$$
Then it is sufficient to prove that for every positive even integer $j$ one has $\vartheta_{\ZZ_p(j)} (\chi_{\rm cyc}^{j}(z)) =\pm \zeta_{K,S}(j)$. ($\vartheta_{\ZZ_p(j)}$ is as in \S \ref{period section}.)

However, this follows directly from the commutativity of diagram (\ref{fe}) by noting that
$$\vartheta_j^{\rm loc}(\chi_{\rm cyc}^j (\Psi))= \pm \frac{\zeta_{K,S}(1-j)}{\zeta_{K,S}(j)} $$
(by Theorem \ref{colint}) and
$$\vartheta_{\ZZ_p(1-j)}(\chi_{\rm cyc}^j(\fz))=\zeta_{K,S}(1-j) $$
(by the interpolation property of the Deligne-Ribet $p$-adic $L$-function $\cL$).

To prove claim (ii) we note that $c$ is, by its very construction, a generator of the $\ZZ_p[[\Gal(\cK/K)]]$-module $\mathcal{E}^{\rm b}(\ZZ_p,\cK)$ of basic Euler systems that is defined in \cite{sbA}. Given this, the first equality in claim (ii) is a direct consequence of \cite[Th. 2.27(ii)]{sbA}.

To prove the second equality in claim (ii) we note first that the definition
\cite[Def. 2.6]{bks1} of the transpose Selmer module $\mathcal{S}^{\rm tr}_{S(F)}(\GG_{m/F})$ combines with the result of \cite[Prop. 2.22(i)]{sbA} (with $\Sigma$ empty) to give an exact sequence of $\ZZ_p[\cG_F]$-modules
\[ 0 \to H^2(\cO_{F,S(F)}, \ZZ_p(1)) \to \mathcal{S}^{\rm tr}_{S(F)}(\GG_{m/F})_p \to \ZZ_p[\cG_F]^r\to 0.\]
By a general property of Fitting ideals, this sequence implies that
\[ {\rm Fitt}_{\ZZ_p[\cG_F]}^0(H^2(\cO_{F,S(F)}, \ZZ_p(1))) = {\rm Fitt}_{\ZZ_p[\cG_F]}^r(\mathcal{S}^{\rm tr}_{S(F)}(\GG_{m/F})_p) = {\rm Fitt}_{\ZZ_p[\cG_F]}^r(\mathcal{S}_{S(F)}(\GG_{m/F})_p^\#)\]
where the second equality is a consequence of \cite[Lem. 2.8]{bks1}. This completes the proof of claim (ii).

Turning to claim (iii) we recall first that, since $p$ is odd, Kummer theory and class field theory combine to give a canonical exact sequence of $\cG_F$-modules of the form
\[ Y_{F,S(F)_f}/p^n \to {\rm Cl}_{F}/p^n \to H^2(\cO_{F,S(F)}, \mu_{p^n}) \to Y_{F,S(F)_f}/p^n.\]
In particular, since for any homomorphism $\chi$ as in (iii) the module $(Y_{F,S(F)_f}/p^n)_\chi$ vanishes, this sequence implies that the $O_\chi[\mathcal{P}_F]$-modules $({\rm Cl}_{F}/p^n)_\chi$ and $H^2(\cO_{F,S(F)}, \mu_{p^n})_\chi$ are isomorphic.

Further, from \cite[Rem. 3.11]{sbA} one knows that the present hypotheses on $\chi$ imply that $H^2(\cO_{F,S(F)}, \mu_{p^n})_\chi$ identifies with the Tate-Shafarevich group $\sha^2(\cO_{K,S(F)},\mathcal{A})$ of the $O_\chi[\mathcal{P}_F]$-module $\mathcal{A} := ({\rm Ind}_{G_F}^{G_K}\mu_{p^n})_\chi$.

To prove claim (iii) it is thus enough to show that the $r$-th Kolyvagin derivative of $c$  determines all Fitting ideals of the  $(\ZZ/p^n)[\mathcal{P}_F]$-module $\sha^2(\cO_{K,S(F)},\mathcal{A})$. But, since $c$ is a generator of $\mathcal{E}^{\rm b}(\ZZ_p,\cK)$, this follows directly from the general results of \cite[Th. 4.15
 and Th. 4.16]{sbA}.

 This completes the proof of Theorem \ref{main}.

\begin{remark} An analysis of the construction of the Euler system $c$ in Theorem \ref{main} shows that it is canonical up to  multiplication by an element of $\ZZ_p[[\Gal(\cK/K)]]^\times$ whose projection to $\ZZ_p[[\Gal(K(\mu_{p^\infty})^+/K)]]^\times$ is equal to $\pm 1$.\end{remark}


\section{A generalized Coleman-Ihara formula}\label{gci section}

In the previous section, we applied Theorem \ref{colint} (the interpolation property of the higher rank Coleman map) for positive even integers to prove Theorem \ref{main}. In this section, we shall give an application of Theorem \ref{colint} for positive odd integers.

Let $K$ be a totally real field in which $p$ is unramified. We set $r:=[K:\QQ]$ and $S:=S_\infty(K) \cup S_p(K)$. We also fix an odd integer $j>1$ and recall the generalized Stark elements
$$\eta_{K,S}(1-j) 
\in \CC_p {\bigwedge}_{\ZZ_p}^r H^1(\cO_{K,S},\ZZ_p(j)) \text{ and }\eta_{K,S}(j)\in \CC_p$$
from Definition \ref{gse def}. 

We use the homomorphism
$${\rm loc}_p:\CC_p {\bigwedge}_{\ZZ_p}^r H^1(\cO_{K,S},\ZZ_p(j)) \to \CC_p {\bigwedge}_{\ZZ_p}^r H^1(K_p,\ZZ_p(j))$$
that is induced by the localization map $K \to K_p$. One easily sees that ${\rm loc}_p$ is non-zero if and only if $H^2(\cO_{K,S},\QQ_p(1-j))$ vanishes (see Remark \ref{explicit stark}(ii)).

In the following result we also use the canonical isomorphism
$${\det}^{-1}_{\QQ_p}(\rgamma(K_p,\QQ_p(j))) \stackrel{(\ref{qp})}{\simeq} {\bigwedge}_{\QQ_p}^r H^1(K_p, \QQ_p(j))$$
to regard the higher rank Coates-Wiles homomorphism
$$\Phi_j \in  {\det}^{-1}_{\ZZ_p}(\rgamma(K_p,\ZZ_p(j)))$$
in Definition \ref{defCW} as an element of ${\bigwedge}_{\QQ_p}^r H^1(K_p, \QQ_p(j))$.

Recall that $D_K$ denotes the discriminant of $K$.

\begin{theorem}\label{CI}
For each odd integer $j>1$ one has
$${\rm loc}_p\left(\eta_{K,S}(1-j)\right)=\pm \eta_{K,S}(j)\cdot D_K^j \cdot \Phi_j \text{ in }\CC_p {\bigwedge}_{\ZZ_p}^r H^1(K_p,\ZZ_p(j)).$$
\end{theorem}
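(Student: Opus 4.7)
The plan is to combine Theorem \ref{colint} with the commutative diagram (\ref{fe}) from Proposition \ref{fe compatible}, and then to verify a key compatibility between the canonical isomorphisms that define the generalized Stark elements.

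Write $x_j := \vartheta_{\ZZ_p(j)}^{-1}(\zeta_{K,S}^\ast(j))$ and $x_{1-j} := \vartheta_{\ZZ_p(1-j)}^{-1}(\zeta_{K,S}^\ast(1-j))$, and let $\beta_j$ and $\beta_{1-j}$ denote the canonical isomorphisms of Definition \ref{gse def}, so that $\eta_{K,S}(j) = \beta_j(x_j)$ and $\eta_{K,S}(1-j) = \beta_{1-j}(x_{1-j})$. If $H^2(\cO_{K,S},\QQ_p(1-j))$ is non-trivial, then $\eta_{K,S}(j) = 0$ by Definition \ref{gse def} and ${\rm loc}_p$ is the zero map by the observation preceding the theorem, so both sides of the claimed identity vanish; we henceforth assume this group vanishes. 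Let $\alpha$ denote the upper horizontal isomorphism in (\ref{fe}). By the commutativity of (\ref{fe}) applied to $\Phi_j \otimes x_j$, together with Theorem \ref{colint},
\[
\vartheta_{\ZZ_p(1-j)}(\alpha(\Phi_j \otimes x_j)) = \vartheta_j^{\rm loc}(\Phi_j) \cdot \vartheta_{\ZZ_p(j)}(x_j) = \pm D_K^{-j} \cdot \zeta_{K,S}^\ast(1-j),
\]
so $\alpha(\Phi_j \otimes x_j) = \pm D_K^{-j} x_{1-j}$, and applying $\beta_{1-j}$ followed by ${\rm loc}_p$ gives
\[
{\rm loc}_p(\beta_{1-j}(\alpha(\Phi_j \otimes x_j))) = \pm D_K^{-j} \cdot {\rm loc}_p(\eta_{K,S}(1-j)).
\]

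The heart of the proof is now the compatibility
\[
{\rm loc}_p(\beta_{1-j}(\alpha(\Phi_j \otimes x_j))) = \pm \eta_{K,S}(j) \cdot \Phi_j.
\]
By the construction of $\alpha$ (see the proof of Proposition \ref{fe compatible}), this isomorphism is built from the Artin-Verdier isomorphism (\ref{av isom}) at argument $1-j$ together with the localization triangle relating $\rgamma_c(\cO_{K,S}, \ZZ_p(j))$ and $\rgamma(\cO_{K,S}, \ZZ_p(j))$ (whose infinite-place contributions vanish, since $K$ is totally real and both $p,j$ are odd). Meanwhile $\beta_{1-j}$ is built from the same Artin-Verdier isomorphism in the opposite direction, composed with (\ref{qp}). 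Hence the two applications of Artin-Verdier cancel in $\beta_{1-j} \circ \alpha$, which therefore reduces to the canonical map induced by the localization triangle for $\rgamma_c(\cO_{K,S}, \ZZ_p(j))$ followed by (\ref{qp}). Post-composing with ${\rm loc}_p$ and comparing with the definition of $\beta_j$ (which uses Artin-Verdier at argument $j$ combined with (\ref{qp})) then shows that the composite sends $\Phi_j \otimes x_j$ to $\pm \beta_j(x_j) = \pm \eta_{K,S}(j)$ times the image of $\Phi_j$ in $\bigwedge_{\QQ_p}^r H^1(K_p, \QQ_p(j))$, as required.

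Combining the two computations above gives $\pm \eta_{K,S}(j) \cdot \Phi_j = \pm D_K^{-j} \cdot {\rm loc}_p(\eta_{K,S}(1-j))$, which rearranges to the claimed identity. The main obstacle is the compatibility established in the second paragraph: it is a diagram chase invoking the Artin-Verdier isomorphism, the localization triangle, the identification (\ref{qp}), and the trivialisations arising from (\ref{decomp}) and the embedding labelling of \S \ref{labeling}, and it requires careful tracking both of the identifications between the various rank-$r$ modules that appear (notably $Y_K(1-j)$ and $\bigoplus_\sigma \ZZ_p(j)$) and of signs throughout.
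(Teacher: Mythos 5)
Your proposal follows essentially the same route as the paper's (terse) proof: reduce to the case $H^2(\cO_{K,S},\QQ_p(1-j))=0$, apply Theorem \ref{colint} to compute $\vartheta_j^{\rm loc}(\Phi_j)$, and then invoke the commutativity of diagram (\ref{fe}) together with the explicit definitions of the two generalized Stark elements. The paper compresses everything after the $\vartheta_j^{\rm loc}$ computation into the phrase ``follows immediately from the explicit definition of the elements $\eta_{K,S}(1-j)$ and $\eta_{K,S}(j)$ and the commutativity of diagram (\ref{fe})'', whereas you helpfully isolate the nontrivial compatibility $\loc_p(\beta_{1-j}(\alpha(\Phi_j\otimes x_j)))=\pm\eta_{K,S}(j)\cdot\Phi_j$ as the crux; this is exactly the diagram chase the paper is implicitly relying on, and your description of the ingredients needed to verify it (Artin--Verdier in two places, the localisation triangle, (\ref{qp}), the archimedean trivialisations) is accurate.
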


\begin{proof}
We may assume $H^2(\cO_{K,S},\QQ_p(1-j))$ vanishes, since otherwise the claimed equality is trivial.
By Theorem \ref{colint} we have
$$\vartheta_j^{\rm loc}(D_K^j\cdot \Phi_j) = \pm \frac{\zeta_{K,S}(1-j)}{\zeta_{K,S}(j)}.$$
Noting this, the claimed result follows immediately from the explicit definition of the elements $\eta_{K,S}(1-j)$ and $\eta_{K,S}(j)$ and the  commutativity of diagram  (\ref{fe}).
\end{proof}

Upon combining this result with Conjecture \ref{pBC} (which predicts the equality $\eta_{K,S}(j)=\chi_{\rm cyc}^{1-j}(\cL)(=L_p(\omega^{1-j},j)) $), we are led to the following prediction.

\begin{conjecture}[Generalized Coleman-Ihara formula]\label{GCI}
Let $j>1$ be an odd integer. Then we have
$${\rm loc}_p\left(\eta_{K,S}(1-j) \right)=\pm \chi^{1-j}_{\rm cyc}(\cL)\cdot D_K^j \cdot \Phi_j \text{ in }\CC_p {\bigwedge}_{\ZZ_p}^r H^1(K_p,\ZZ_p(j)).$$
\end{conjecture}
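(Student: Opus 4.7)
The plan is to derive Conjecture \ref{GCI} as a formal consequence of Theorem \ref{CI} and the $p$-adic Beilinson conjecture (Conjecture \ref{pBC}). Theorem \ref{CI} establishes unconditionally the equality
\[
{\rm loc}_p\left(\eta_{K,S}(1-j)\right) = \pm\, \eta_{K,S}(j) \cdot D_K^j \cdot \Phi_j
\]
in $\CC_p \, {\bigwedge}_{\ZZ_p}^r H^1(K_p, \ZZ_p(j))$, while Conjecture \ref{pBC} predicts the scalar identity $\eta_{K,S}(j) = \chi_{\rm cyc}^{1-j}(\cL)$ (equivalently, $\eta_{K,S}(j) = L_p(\omega^{1-j}, j)$ in the classical normalisation). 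Substituting the latter into the former yields exactly the claimed formula, so the problem of proving Conjecture \ref{GCI} is reduced to that of proving Conjecture \ref{pBC}.

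In particular, whenever Conjecture \ref{pBC} is known the prediction of Conjecture \ref{GCI} becomes a theorem with no further work. By Remark \ref{abel}, this applies when $K$ is abelian over $\QQ$, in which case Conjecture \ref{pBC} coincides with a case of the conjecture \cite[Conj. 2.17(2)]{BBJR} of Besser, Buckingham, de Jeu and Roblot that is proved in \cite[Rem. 4.18]{BBJR}. In the even more restrictive case $K = \QQ$, the identification $\Phi_j = \pm(p^{j-1}-1)\varphi_j^{\rm CW}$ from Remark \ref{remCW} shows that the resulting identity is, up to the expected Euler factor at $p$, the classical Coleman-Ihara formula; this provides both a consistency check and a concrete justification for the terminology `generalised Coleman-Ihara formula'. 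This unconditional verification in the abelian case, together with the compatibility with the classical formula, constitutes the main body of evidence that the paper intends to present for the conjecture.

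The hard part, and the reason the statement must be left conjectural in general, is therefore Conjecture \ref{pBC} itself for totally real fields that are not abelian over $\QQ$. In the abelian case, access to this identity rests on an explicit comparison between syntomic regulators on the $K$-theory of $\cO_K$ (whose determinant defines $\eta_{K,S}(j)$ via Remark \ref{explicit stark}(ii)) and the values $L_p(\omega^{1-j}, j)$, using Coleman power series together with a theorem of Gros. No comparable machinery relating syntomic regulators on $K_{2j-1}(\cO_K)$ to the Deligne-Ribet $L$-function is presently available for non-abelian totally real $K$, and bridging this gap appears to require either an Iwasawa-theoretic descent from a sufficiently rich abelian CM tower carrying the required $K$-theory classes, or a genuinely new construction of syntomic-type classes attached to non-abelian motives. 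Either avenue is a substantial open problem, which is why Theorem \ref{CI} is most naturally recorded as an unconditional reduction rather than a full proof of Conjecture \ref{GCI}.
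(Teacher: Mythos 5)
Your proposal correctly identifies that Conjecture \ref{GCI} is obtained by substituting the prediction $\eta_{K,S}(j)=\chi_{\rm cyc}^{1-j}(\cL)$ of Conjecture \ref{pBC} into the unconditional equality of Theorem \ref{CI}, and that the conjecture is therefore known whenever Conjecture \ref{pBC} is (in particular for $K$ abelian over $\QQ$ via Remark \ref{abel}, and for $K=\QQ$ recovering the classical Coleman-Ihara formula). This is exactly the reasoning the paper presents; the only caveat is that the paper never claims to prove the conjecture in general, so your final paragraph on what a general proof would require is extra commentary rather than part of the argument, and your phrase ``up to the expected Euler factor at $p$'' is slightly misleading since the factor $(p^{j-1}-1)$ is already present in the classical formula as stated in Proposition \ref{CIP}.
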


From Theorem \ref{CI} and (the final observation of) Remark \ref{abel} one knows that this conjecture is valid whenever $K$ is an abelian extension of $\QQ$.

The interest of the conjecture is explained by the following result which shows that the prediction constitutes a natural generalization of the classical Coleman-Ihara formula (cf. \cite[p. 105]{ihara} or \cite{NSW}) from the case $K = \QQ$ to the case of totally real fields $K$.

\begin{proposition}\label{CIP}
If $K=\QQ$, then the equality of Conjecture \ref{GCI} is equivalent (up to sign) to the Coleman-Ihara formula.
\end{proposition}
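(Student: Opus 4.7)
The plan is to show that, when $K=\QQ$, the prediction of Conjecture \ref{GCI} coincides -- up to a sign -- with the classical Coleman-Ihara formula; since all the factors appearing in the conjecture have already been computed or named explicitly in this paper, the argument amounts to identifying each with its classical counterpart.

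First, I would specialize Conjecture \ref{GCI}: when $K=\QQ$ one has $r=1$, $D_K=1$ and $S=\{\infty,p\}$, so the predicted equality takes the form
$${\rm loc}_p(\eta_{\QQ,S}(1-j)) = \pm\, \chi_{\rm cyc}^{1-j}(\cL)\cdot \Phi_j \quad\text{in } \CC_p H^1(\QQ_p,\QQ_p(j)).$$
Using Remark \ref{abel} to rewrite $\chi_{\rm cyc}^{1-j}(\cL) = L_p(\omega^{1-j},j)$, and Remark \ref{remCW} to substitute $\Phi_j = \pm(p^{j-1}-1)\varphi_j^{\rm CW}$ with $\varphi_j^{\rm CW}$ the classical Coates-Wiles homomorphism of Bloch-Kato, this becomes
$${\rm loc}_p(\eta_{\QQ,S}(1-j)) = \pm (p^{j-1}-1)\, L_p(\omega^{1-j},j)\cdot \varphi_j^{\rm CW},$$
which is exactly the shape of the classical Coleman-Ihara formula.

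Second, I would identify $\eta_{\QQ,S}(1-j)$ (up to sign) with the Soul\'e cyclotomic element of $H^1(\ZZ[1/p],\ZZ_p(j))$ that features in the classical formula. By Definition \ref{gse def} the Stark element is the image of $\vartheta_{\ZZ_p(1-j)}^{-1}(\zeta_{\QQ,S}^\ast(1-j))$ under the canonical isomorphism described in \S\ref{period section}; on the other hand, Borel's theorem combined with the computation of the Beilinson regulator on the Soul\'e element characterizes the latter (up to sign) by precisely the same property. The required identification of $\eta_{\QQ,S}(1-j)$ with a sign times the Soul\'e element then follows from the explicit comparison carried out in \cite{bks2-2}.

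Finally, substituting this identification into the displayed formula above and comparing with the statement of the Coleman-Ihara formula as recorded in \cite[p.\,105]{ihara} or \cite{NSW} yields the claim. The main obstacle is purely notational: one must confirm that the normalization factors $2$ and $\frac{1}{2}$ built into (\ref{decomp}) (which propagate through $\vartheta_j^{\rm loc}$ and $\vartheta_{\ZZ_p(1-j)}$), together with the Euler factor $(p^{j-1}-1)$ distinguishing $\Phi_j$ from $\varphi_j^{\rm CW}$, combine to reproduce precisely the sign, Euler factor and $p$-adic $L$-value appearing in the classical statement; no genuinely new input beyond the identification of the Stark element with the Soul\'e element is required.
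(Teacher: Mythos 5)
Your proposal follows essentially the same route as the paper's proof: after specializing to $K=\QQ$ (where $r=1$, $D_K=1$), you replace $\Phi_j$ by $\pm(p^{j-1}-1)\varphi_j^{\rm CW}$ via Remark \ref{remCW}, identify $\eta_{\QQ,S}(1-j)$ with (minus) the Deligne--Soul\'e cyclotomic element $c_j$ through the Borel-regulator characterization, and read off the Coleman--Ihara formula. The paper carries out the Soul\'e-element identification explicitly via the map $\lambda_j={\rm reg}_j\circ{\rm ch}_j^{-1}$ and the computation $\lambda_j(c_j)=-\zeta_{\QQ,S}^\ast(1-j)\cdot 2(2\pi\sqrt{-1})^{j-1}$ from Huber--Kings, whereas you gesture at the same fact via the comparison in \cite{bks2-2}; this is a difference of citation rather than of method.
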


\begin{proof} Consider the case $K=\QQ$. Then by Remark \ref{remCW} we have
$$\Phi_j =\pm (p^{j-1}-1)\varphi_j^{\rm CW} \text{ in }H^1(\QQ_p, \QQ_p(j)).$$
Also, we know that
$$\eta_{\QQ,S}(1-j) =- c_j \text{ in }H^1(\ZZ_S, \QQ_p(j)),$$
where $c_j:=c_j(1)$ is the cyclotomic element of Deligne-Soul\'e (see \cite[Def. 3.1.2]{HK}). In fact, by definition, $\eta_{\QQ,S}(1-j)$ is characterized by
$$\lambda_{j}(\eta_{\QQ,S}(1-j)) =\zeta_{\QQ, S}^\ast(1-j)\cdot 2(2\pi \sqrt{-1})^{j-1},$$
where $\lambda_{j} :\CC_p H^1(\ZZ_S, \ZZ_p(j)) \simeq \CC_p K_{2j-1}(\ZZ) \simeq \CC_p H_\QQ(j-1)^+$ denotes the composition ${\rm reg}_j \circ {\rm ch}_j^{-1}$, and we also know that $\lambda_{j}(c_j)=- \zeta_{\QQ, S}^\ast(1-j) \cdot 2 (2\pi\sqrt{-1})^{j-1}$ (see \cite[Th. 5.2.1 and 5.2.2]{HK}). Here we remark that $2 (2\pi\sqrt{-1})^{j-1} =(1+c)(2\pi\sqrt{-1})^{j-1}$ is the canonical basis of $H_\QQ(j-1)^+$ (see \cite[(2)]{bks2-2}).

Thus Conjecture \ref{GCI} in this case is equivalent (up to sign) to the formula
$${\rm loc}_p(c_j) = L_p(\omega^{1-j},j)\cdot (p^{j-1}-1)\varphi_j^{\rm CW} \text{ in }H^1(\QQ_p, \QQ_p(j)),$$
which is exactly the formulation of the Coleman-Ihara formula.
\end{proof}






\begin{acknowledgments}
The authors would like to thank Ryotaro Sakamoto for showing them an earlier version of his article \cite{sakamoto}, Masato Kurihara for many stimulating discussions concerning this, and related, projects, and Rob de Jeu for helpful discussions concerning Schneider's conjecture. The second author would also like to thank Kenji Sakugawa for helpful discussions concerning the Coleman-Ihara formula.
\end{acknowledgments}

\end{document}